\documentclass[reqno]{amsart}
\usepackage{CJK}
\usepackage{hyperref}
\usepackage{cite}
\usepackage{amsmath}
\usepackage{mathrsfs}
\usepackage{xcolor}
\usepackage{bbm}
\usepackage{pifont}
\usepackage{bbding}
\usepackage{amsmath, esint}
\usepackage{mathrsfs}
\usepackage{amsfonts}
\usepackage{amssymb}
\usepackage{amsfonts,amssymb,amsmath,indentfirst,cases,amsthm}
\usepackage{epsfig}
\usepackage[numbers,sort&compress]{natbib}

\makeatletter
\@namedef{subjclassname@2020}{%
	\textup{2020} Mathematics Subject Classification}
\makeatother
\numberwithin{equation}{section}

\newtheorem{theorem}{Theorem}[section]
\newtheorem{lemma}[theorem]{Lemma}
\newtheorem{definition}[theorem]{Definition}

\newtheorem{proposition}[theorem]{Proposition}
\newtheorem{remark}[theorem]{Remark}

\allowdisplaybreaks

\begin{document}
	
\title[\hfil Mixed Local and Nonlocal equations in the Heisenberg Group]{$C^{1,\alpha}$-regularity for Mixed Local and Nonlocal Degenerate Elliptic Equations in the Heisenberg Group}

\author[J. Zhang  \hfil \hfilneg]{Junli Zhang}

\address{Junli Zhang \hfill\break School of Mathematics and Data Science, Shaanxi University of Science and Technology, Xi'an, Shaanxi, 710021, China}
\email{jlzhang2020@163.com}

\subjclass[2020]{35B65, 35D30, 35J70, 35R09, 35R11}
\keywords{$C^{1,\alpha}$-regularity, local and nonlocal operator, Heisenberg group, horizontal difference, iteration scheme of Morrey-type.}

\maketitle

\begin{abstract}
The regularity theory for equations combining both local and nonlocal operators in sub-Riemannian geometries is a huge challenge. In this paper, we investigate the $C^{1,\alpha}$-regularity of weak solutions to mixed local and nonlocal degenerate elliptic equations on the Heisenberg group. We first derive a sophisticated iteration scheme of Morrey-type by leveraging horizontal difference combined with the fractional Sobolev-type inequality on the Heisenberg group. Then, the H\"{o}lder continuity of the weak solutions is established by applying the local boundedness, the iteration scheme of Morrey-type, an iterative method and the Morrey inequality. Finally, we use the H\"{o}lder continuity in conjunction with Theorem 1.2 from Mukherjee and Zhong\cite{MZ21} to prove the $C^{1,\alpha}$-regularity of weak solutions.
\end{abstract}

\section{Introduction}
\label{sec-1}

The study of regularity theory for degenerate elliptic equations has long stood as a cornerstone of modern analysis, tracing its intellectual lineage to the groundbreaking contributions of De Giorgi, Nash, and Moser. In the context of the Heisenberg group $\mathbb{H}^n$, a paradigmatic example of a sub-Riemannian manifold, the analysis of quasilinear degenerate elliptic equations of $p$-sub-Laplacian type has undergone profound development. In the pioneering work \cite{CL97}, Capogna established $C^{1,\alpha}$-regularity for the case $p=2$. Subsequently, Mukherjee and Zhong\cite{MZ21} extended this result to quasilinear degenerate elliptic equations with $p$-growth conditions for all $1<p<\infty$ via De Giorgi's method. Further progress includes: fractional differentiability estimates for nonlinear non-differentiable degenerate elliptic equations obtained by Zhang and Niu\cite{ZN22}; ${C^{\alpha }}$-regularity of minimizers for generalized Orlicz functionals encompassing $p,q$-growth conditions established by Zhang and Niu\cite{ZN20}; weak differentiability of weak solutions under the constraint $1<p<4$ for quasilinear equations with $p,q$-growth conditions proved by Zhang and Li\cite{ZL23}. A sufficient condition excluding the Lavrentiev phenomenon for non-autonomous integral functionals with $p,q$-growth condition in the same setting provided by Zhang and Niu\cite{ZN231}. For additional developments and comprehensive references, we refer the reader to \cite{ZN22} and the works cited therein.

The regularity theory for purely nonlocal equations involving fractional powers of the $p$-Laplacian on the Heisenberg group has witnessed remarkable progress in recent years, spurred by pivotal advancements such as the introduction of nonlocal tailing mechanisms and the establishment of nonlocal Harnack and H\"{o}lder regularity estimates. These contributions, as reported in Roncal and Thangavelu\cite{RT16}, Frank, Lieb, and Seiringer\cite{FLS08}, Ferrari and Franchi\cite{FF15}, Ferrari, Miranda, Pallara, Pinamonti and Sire\cite{FMPPS18}, Frank, Gonzalez, Monticelli and Tan\cite{FGMT15}, Manfredini, Palatucci, Piccinini and Polidoro\cite{MPPP23}, and Palatucci and Piccinini\cite{PP22}, collectively constitute a transformative body of work reshaping the analytical foundations of the field. Moreover, Palatucci and Piccinini in 2022 (see \cite{PP22}) and 2023 (see \cite{PP23}) proposed an open problem how to determine the regularity of nonlocal double-phase equations in the Heisenberg group. Fang, Zhang and Zhang provided a solution in \cite{FZZ24}.

However, the analysis of equations combining local and nonlocal operators-referred to as "mixed" equations-presents a formidable challenge due to the intricate interplay between classical diffusion processes and long-range interactions. While the Euclidean setting has witnessed rapid progress, with results ranging from local boundedness (see Garain and Kinnunen\cite{GK22}) to higher-order H\"{o}lder estimates (see Garain and Lindgren\cite{GL23}), extending such theories to the non-commutative geometry of the Heisenberg group demands overcoming profound technical obstacles, including the absence of embedding theorems and the inherent anisotropy of the Carnot-Carath\'{e}odory metric.

Recently, Zhang, Niu and Wu in \cite{ZNW25} proved that both the derivative in the vertical direction and the derivative in the horizontal direction of weak solutions $u \in H{W^{1,p}}\left( \Omega  \right)$ to mixed sub-Laplace and fractional sub-Laplace equations and $f \in {L^2}\left( \Omega  \right)$ belonging to the first-order local Sobolev space $HW_{loc}^{1,p}\left( \Omega  \right)$. Furthermore, under the assumption $f \in H{W^{m,p}}\left( \Omega  \right)$, it follows that $u \in HW_{loc}^{m + 2,p}\left( \Omega  \right)$. Subsequently, Zhang and Niu\cite{ZN26} achieved a significant breakthrough by establishing fundamental regularity properties for weak solutions to mixed local and nonlocal degenerate elliptic equations in $\mathbb{H}^n$. Specifically, they demonstrated the local boundedness of weak subsolutions, the H\"{o}lder continuity of weak solutions, and critically, both the Harnack inequality and the weak Harnack inequality. These results affirm that the classical difference and De Giorgi-Nash-Moser theory can be fruitfully extended to this hybrid setting, laying a robust foundation for the qualitative analysis of such equations. While H\"{o}lder continuity of the solution $u$ has been established, a central open question remains: does the horizontal gradient ${{\nabla _H}u}$ exhibit H\"{o}lder continuity, i.e., do solutions belong to the $C^{1,\alpha}$ class? This problem is exceptionally intricate in the context of degenerate ($p>2$) equations coupled with mixed operators, as standard linearization techniques prove inadequate, and the interplay between the local $p$-Laplacian structure and the nonlocal fractional term obfuscates the differentiation of the equation.

In this paper, we bridge this gap by investigating the $C^{1,\alpha}$-regularity of weak solutions to mixed local and nonlocal degenerate elliptic equations in the Heisenberg group. Building on the Harnack estimates and tail control developed in \cite{ZN26}, we aim to prove that under suitably structured assumptions, weak solutions not only exhibit H\"{o}lder continuity but also possess higher smoothness properties with their horizontal derivatives satisfying a H\"{o}lder condition. This result will extend the classical $C^{1,\alpha}$ estimates for $p$-sub-Laplace equations in $\mathbb{H}^n$ to the more intricate landscape of mixed local-nonlocal diffusion.

In this paper we focus on the analysis of the mixed local and nonlocal degenerate elliptic equations
\begin{equation}\label{eq0}
 - \Delta _{{{\mathbb{H}},p}}u + \Lambda{\left( { - \Delta _{{{\mathbb{H}},p}}} \right)^s}u = 0\;{\rm{in}}\;\Omega ,
\end{equation}
where $2\le p< \infty,\;0<s<1,\;0\le\Lambda \le 1 $, and $\Omega$ is a bounded open subset in the Heisenberg group  ${{\mathbb{H}}^n},\;n\ge1$. Here the $p$-sub-Laplace operator on the Heisenberg group is defined by
\begin{equation}\label{eq12}
 - {\Delta _{{\mathbb{H}},p}}u = di{v_H}\left( {{{\left| {{\nabla _H}u} \right|}^{p - 2}}{\nabla _H}u} \right),
\end{equation}
while the fractional $p$-sub-Laplace operator on the Heisenberg group is given by
\begin{equation}\label{eq13}
{\left( { - {\Delta _{{\mathbb{H}},p}}} \right)^s}u = {\rm{P}}{\rm{.V}}{\rm{.}}\int_{{{\mathbb{H}}^n}} {\frac{{{{\left| {u\left( \xi  \right) - u\left( \eta  \right)} \right|}^{p - 2}}\left( {u\left( \xi  \right) - u\left( \eta  \right)} \right)}}{{{{\left\| {{\eta ^{ - 1}} \circ \xi } \right\|}^{Q + sp}_{\mathbb{H}^n}}}}d\eta },
\end{equation}
where P.V. signifies the Cauchy principal value and $Q=2n+2$ is the homogeneous dimension of ${{\mathbb{H}}^n}$.

The main results are as follows:
\begin{theorem}\label{Th13}
Let $2\le p <\infty,\;0<s<1$ and $0\le \Lambda \le 1$. Suppose $\Omega \in \mathbb{H}^n$ is a bounded open set and $u \in HW_{{\rm{loc}}}^{1,p}\left( \Omega \right) \cap{ L_{sp}^{p - 1}\left( {{\mathbb{H}^n}} \right)} $ is a weak solution of \eqref{eq0}. Then $u \in C_{loc}^\gamma \left( \Omega  \right)$ for any $0 < \gamma  < 1$. Moreover, for any $0 < \gamma  < 1$ and any Kor\'{a}nyi ball ${B_{2R}}\left( {{\xi _0}} \right) \subset  \subset \Omega $ with $0<R<1$, there exists a positive constant $c=c(Q,p,s,\gamma)$ such that
\begin{equation}\label{eq482}
  \mathop {\sup }\limits_{\xi  \ne \eta  \in {B_{\frac{R}{2}}}\left( {{\xi _0}} \right)} \frac{{\left| {u\left( \xi  \right) - u\left( \eta  \right)} \right|}}{{{{\left\| {{\eta ^{ - 1}} \circ \xi } \right\|}^\gamma _{\mathbb{H}^n}}}} \le \frac{c}{{{R^\gamma }}}\left( {{{{\left\| u \right\|}_{{L^\infty }\left( {{B_R}\left( {{\xi _0}} \right)} \right)}}} + {\rm{Tai}}{{\rm{l}}_{p - 1,sp,p}}\left( {u;{\xi _0},R} \right)} \right),
\end{equation}
where the definition of ${\rm{Tai}}{{\rm{l}}_{p - 1,sp,p}}\left( {u;{\xi _0},R} \right)$ see \eqref{eq26} below.
\end{theorem}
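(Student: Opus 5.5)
The plan is to follow the route announced in the abstract: a Morrey-type decay estimate for the horizontal gradient, iterated and then converted into H\"older continuity via the Campanato/Morrey characterization on $\mathbb{H}^n$. The target is
\[
\int_{B_\rho(\xi_1)}|\nabla_H u|^p\,d\xi\le c\Big(\frac{\rho}{r}\Big)^{Q-\varepsilon}\int_{B_r(\xi_1)}|\nabla_H u|^p\,d\xi+c\,r^{Q-\varepsilon}\,M^p ,
\]
where $M=\|u\|_{L^\infty(B_R)}+{\rm Tail}_{p-1,sp,p}(u;\xi_0,R)$ and $\varepsilon>0$ is arbitrary.

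\textbf{Step 1: comparison function.} On a small ball $B_r(\xi_1)\subset B_R(\xi_0)$, let $v$ be the $p$-sub-harmonic function, i.e.\ $-\Delta_{\mathbb{H},p}v=0$, with $v-u\in HW^{1,p}_0(B_r)$. By the known local theory (Capogna for $p=2$, and \cite{MZ21} for general $p$), $\nabla_H v$ is locally bounded. This gives the decay
\[
\int_{B_\rho}|\nabla_H v|^p\le c\,(\rho/r)^Q\int_{B_r}|\nabla_H v|^p .
\]

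\textbf{Step 2: comparison estimate.} Test both equations with $u-v$ and use the monotonicity inequality for $p\ge2$. This bounds $\int_{B_r}|\nabla_H(u-v)|^p$ by $\Lambda$ times the nonlocal pairing of $u$ with $u-v$. Split that pairing into two parts.
\begin{itemize}
\item The part over $B_{2r}\times B_{2r}$ is controlled by the horizontal-difference / fractional Sobolev inequality on $\mathbb{H}^n$, which embeds $HW^{1,p}$ into $W^{s,p}$ with constant $r^{(1-s)p}$. Combined with Young's inequality, this yields a term $c\,r^{(1-s)p}\int_{B_r}|\nabla_H u|^p$ plus a term absorbable into the left-hand side.
\item The part with one variable outside $B_{2r}$ is a tail term. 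It is bounded by $c\,r^{Q-sp}(\|u\|_{L^\infty}+{\rm Tail})^{p}$. Here we use the local boundedness of $u$ from \cite{ZN26}, and $u-v$ is bounded by $2\|u\|_\infty$ by the maximum principle. We then compare the tail centered at $\xi_1$ with that centered at $\xi_0$.
\end{itemize}
The conclusion is
\[
\int_{B_r}|\nabla_H (u-v)|^p\le c\,r^{(1-s)p}\int_{B_r}|\nabla_H u|^p+c\,r^{Q-sp}M^p.
\]
I expect this step to be the main obstacle. The nonlocal term must be estimated purely through horizontal quantities; the lack of a Euclidean embedding forces the use of the fractional Sobolev-type inequality on $\mathbb{H}^n$ from the earlier section. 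The tail cross-terms also have to be tracked carefully so that they yield a positive power of $r$.

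\textbf{Step 3: iteration and conclusion.} Combining Steps 1 and 2 gives
\[
\phi(\rho)\le c\big[(\rho/r)^Q+r^{(1-s)p}\big]\phi(r)+c\,r^{Q-sp}M^p ,\qquad \phi(\rho)=\int_{B_\rho}|\nabla_H u|^p .
\]
The standard iteration lemma (small perturbation), with $r\le r_0$ small, gives $\phi(\rho)\le c\,\rho^{Q-sp}(\dots)$. Since $sp$ may be at least $p$, this exponent alone is insufficient. I would therefore bootstrap: feed the improved decay back into the tail estimate, using $|u-v|\le c\,r\,\mathrm{osc}$-type bounds obtained via Poincar\'e. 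Repeating this finitely many times upgrades the exponent to $Q-p+\gamma p$ for any $\gamma<1$.

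Finally, the Morrey inequality on $\mathbb{H}^n$ (Morrey space $L^{p,Q-p+\gamma p}$ of $\nabla_H u$ implies $C^\gamma$) gives \eqref{eq482}. The $R^{-\gamma}$ scaling follows by applying everything to $u(\xi_0\circ\delta_R\cdot)$ on the unit ball. The energy $\int_{B_R}|\nabla_H u|^p$ is controlled by $R^{Q-p}M^p$ via a Caccioppoli inequality with tail.
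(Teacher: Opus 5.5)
Your route is genuinely different from the paper's, and in outline it works. For Theorem \ref{Th13} the paper never uses a comparison function. It first normalizes so that $\|u\|_{L^\infty(B_1)}\le 1$, the tail is $\le 1$, and (by Caccioppoli) $\nabla_H u\in L^p(B_{7/8})$. It then differentiates the equation discretely along $Z$, testing with $J_{\alpha+1}(\Delta_{Z,h}u)\varphi^p$, to obtain Proposition \ref{Pro41}: $\nabla_H u\in L^q$ gives a uniform $L^{q+1}$ bound on $\Delta^2_{Z,h}u/|h|^{1+\frac{1}{q+1}}$, hence $\nabla_H u\in L^{q+1}$ on a smaller ball by Lemma \ref{Le213}. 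Running this from $q=p$ up to $q>Q/(1-\gamma)$ and applying $HW^{1,q}\hookrightarrow C^{1-Q/q}$ (Lemma \ref{Le23}) gives $C^\gamma$. That argument yields integrability of $\nabla_H u$ for every finite exponent and needs only local boundedness and Caccioppoli; \cite{MZ21} enters only in Theorem \ref{Th14}. Your perturbation scheme instead rests on the deep Lipschitz bound for $p$-sub-harmonic functions and on the Campanato--Morrey characterization on $\mathbb{H}^n$. In exchange it gives Morrey decay of $\nabla_H u$ directly. The same comparison, now using $C^{1,\beta}$ of $v$, is also the natural way to continue to $C^{1,\alpha}$ without first bounding $(-\Delta_{\mathbb{H},p})^s u$ in $L^\infty$.

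Some points to fix. Since $s<1$ you always have $sp<p$, so your justification for the bootstrap is wrong. The first pass already gives $\int_{B_\rho}|\nabla_H u|^p\lesssim\rho^{Q-p+p(1-s)}$, i.e. $u\in C^{1-s}$. The bootstrap is needed only to go from $1-s$ to arbitrary $\gamma<1$, and it should be stated precisely. Once $u\in C^{\gamma_k}$ is known, two gains are needed:
\begin{itemize}
\item[(i)] smallness of $u-v$, e.g. $\|u-v\|_{L^\infty(B_r)}\le\operatorname{osc}_{B_r}u\lesssim r^{\gamma_k}$ by the comparison principle (Poincar\'e plus Young also works);
\item[(ii)] the H\"older bound inside the kernel, $\sup_{\xi\in B_r(\xi_1)}\int_{\mathbb{H}^n\setminus B_{2r}(\xi_1)}|u(\xi)-u(\eta)|^{p-1}\|\eta^{-1}\circ\xi\|_{\mathbb{H}^n}^{-Q-sp}\,d\eta\lesssim r^{\min\{\gamma_k(p-1)-sp,0\}}$, up to a logarithm in the equality case.
\end{itemize}
With both, the tail term is $\lesssim r^{Q+\gamma_k+\min\{\gamma_k(p-1)-sp,0\}}$. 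So $\gamma_{k+1}=\gamma_k+1-s$ while $\gamma_k<s$, and any $\gamma<1$ is reached in finitely many rounds. With (ii) alone, the recursion $\gamma_{k+1}=1-s+\frac{p-1}{p}\gamma_k$ only approaches $p(1-s)$, which is $<1$ when $sp>p-1$.

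Finally, what Step 2 needs is not Lemma \ref{Le26} but the elementary bound $[w]^p_{HW^{s,p}(B_{2r})}\le c\,r^{(1-s)p}\int_{B_{cr}}|\nabla_H w|^p$. It follows from $\int|w(\xi\circ z)-w(\xi)|^p\,d\xi\le c\|z\|_{\mathbb{H}^n}^p\int|\nabla_H w|^p$, and the enlarged ball $B_{cr}$ is harmless in the iteration lemma.
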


\begin{remark}
In contrast to \cite{ZN26}, where the H\"older index $\gamma \in (0,\frac{sp}{p-1})$ relies on the parameters $s$ and $p$, Theorem \ref{Th13} establishes a parameter-independent H\"older regularity with $\gamma \in (0,1)$. This holds under more general nonlinear frameworks and is particularly advantageous for subsequent regularity analysis.
\end{remark}

\begin{theorem}\label{Th14}
Let $2\le p <\infty,\;0<s<1,\;sp<p-1$ and $0\le \Lambda \le 1$. Suppose $\Omega \in \mathbb{H}^n$ is a bounded open set and $u \in HW_{{\rm{loc}}}^{1,p}\left( \Omega \right) \cap{ L_{sp}^{p - 1}\left( {{\mathbb{H}^n}} \right)} $ is a weak solution of \eqref{eq0}. Then $u \in C_{loc}^{1,\alpha} \left( \Omega  \right)$ for some $0 < \alpha  < 1$. Moreover, for some $0 < \alpha  < 1$ and any Kor\'{a}nyi ball ${B_{2R}}\left( {{\xi _0}} \right) \subset  \subset \Omega $ with $0<R<1$, there exists a positive constant $c=c(Q,p,q,s,\gamma)$ such that
\begin{equation}\label{eq51}
 \mathop {\sup }\limits_{\xi  \ne \eta  \in {B_{\frac{R}{8}}}\left( {{\xi _0}} \right)} \frac{{\left| {{\nabla _H}u\left( \xi  \right) - {\nabla _H}u\left( \eta  \right)} \right|}}{{{{\left\| {{\eta ^{ - 1}} \circ \xi } \right\|}^\alpha _{\mathbb{H}^n}}}} \le \frac{c}{{{R^{1 + \alpha }}}}\left( {{{\left\| u \right\|}_{{L^\infty }\left( {{B_R}\left( {{\xi _0}} \right)} \right)}} + {\rm{Tai}}{{\rm{l}}_{p - 1,sp,p}}\left( {u;{\xi _0},R} \right)} \right),
\end{equation}
where the definition of ${\rm{Tai}}{{\rm{l}}_{p - 1,sp,p}}\left( {u;{\xi _0},R} \right)$ see \eqref{eq26} below.
\end{theorem}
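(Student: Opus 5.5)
The strategy is to treat the nonlocal term as a right-hand side for the local $p$-sub-Laplacian and then appeal to Theorem 1.2 of Mukherjee and Zhong \cite{MZ21}. That result gives $C^{1,\alpha}$ estimates for weak solutions of $\operatorname{div}_H A(\nabla_H u) = f$ (or with structured lower order terms) whenever $f\in L^\infty_{loc}$, or $f$ in a suitable Morrey/$L^q$ class with $q$ large. So the heart of the proof is to show that
\[
f(\xi):=\Lambda\,\mathrm{P.V.}\int_{\mathbb{H}^n}\frac{|u(\xi)-u(\eta)|^{p-2}(u(\xi)-u(\eta))}{\|\eta^{-1}\circ\xi\|_{\mathbb{H}^n}^{Q+sp}}\,d\eta
\]
is locally bounded in $B_{R/2}(\xi_0)$. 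We would bound it by a multiple of $R^{-sp}\bigl(\|u\|_{L^\infty(B_R)}+\mathrm{Tail}_{p-1,sp,p}(u;\xi_0,R)\bigr)^{p-1}$ and then read the weak formulation of \eqref{eq0} as $-\Delta_{\mathbb{H},p}u=-f$ in the distributional sense.

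To bound $f$, fix $\xi\in B_{R/4}(\xi_0)$ and split the integral into the near region $\|\eta^{-1}\circ\xi\|<R/4$ and its complement. On the far region we estimate $|u(\xi)-u(\eta)|^{p-1}\lesssim \|u\|_{L^\infty(B_R)}^{p-1}+|u(\eta)|^{p-1}$. The first term integrates to $cR^{-sp}\|u\|^{p-1}_{L^\infty}$. The second is controlled by $R^{-sp}\mathrm{Tail}^{p-1}$ after using $\|\eta^{-1}\circ\xi_0\|\le c\|\eta^{-1}\circ\xi\|$ when $\eta$ lies outside $B_{R/4}(\xi)$ and $\xi\in B_{R/4}(\xi_0)$, together with the local boundedness on the part of the region inside $B_R(\xi_0)$.

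On the near region we use Theorem \ref{Th13} with an exponent $\gamma<1$ chosen so that $\gamma(p-1)>sp$; this is possible exactly because $sp<p-1$. Then
\[
|u(\xi)-u(\eta)|^{p-1}\le [u]_{C^\gamma}^{p-1}\|\eta^{-1}\circ\xi\|^{\gamma(p-1)},
\]
and the kernel singularity becomes $\|\eta^{-1}\circ\xi\|^{\gamma(p-1)-Q-sp}$, which is integrable near $\xi$ in homogeneous dimension $Q$. The principal value is therefore an absolutely convergent integral. Combined with \eqref{eq482}, this gives $|f|\le cR^{-sp}(\|u\|_{L^\infty(B_R)}+\mathrm{Tail})^{p-1}$ on $B_{R/4}(\xi_0)$. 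This step is the reason for the hypothesis $sp<p-1$, and it is why Theorem \ref{Th13} is needed for all $\gamma<1$ rather than only for $\gamma<sp/(p-1)$.

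The main obstacle is justifying that the weak formulation can be rewritten with this pointwise $f$. That is, for $\varphi\in C_0^\infty(B_{R/4})$ we need
\[
\iint\frac{J_p(u(\xi)-u(\eta))(\varphi(\xi)-\varphi(\eta))}{\|\eta^{-1}\circ\xi\|^{Q+sp}}\,d\xi\, d\eta=2\int f\varphi\,d\xi .
\]
We would prove this by truncating the kernel to $\|\eta^{-1}\circ\xi\|>\varepsilon$, using antisymmetry, and passing to the limit by dominated convergence, with domination supplied by the Hölder bound above. Once this identity holds, $u$ is a weak solution of a $p$-sub-Laplace equation with bounded right-hand side in $B_{R/4}(\xi_0)$.

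Theorem 1.2 of \cite{MZ21}, applied in its inhomogeneous form, then gives $\nabla_H u\in C^{\alpha}$ on $B_{R/8}(\xi_0)$. Its quantitative bound is in terms of the $L^p$ (or $L^\infty$) norm of $u$ and $\|f\|_{L^\infty}$, and by scaling it takes the form $R^{-1-\alpha}(\|u\|_{L^\infty(B_{R/4})}+R^{p'}\|f\|^{1/(p-1)}_{L^\infty})$. Since $R<1$, we have $R^{p/(p-1)}\cdot R^{-sp/(p-1)}\le 1$, so inserting the bound on $f$ yields \eqref{eq51}.

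If the available version of \cite{MZ21} only covers homogeneous equations, the fallback is a comparison argument. On each small ball $B_r$ we compare $u$ with the $p$-sub-harmonic replacement $v$, and the bound $|f|\le M$ gives
\[
\fint_{B_r}|\nabla_H u-\nabla_H v|^p\lesssim (Mr)^{p'} .
\]
The $C^{1,\alpha}$ decay of $v$ then transfers to $u$ by a Campanato iteration.
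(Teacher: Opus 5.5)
Your proposal is correct and follows essentially the same route as the paper. After rescaling to $u_R$, the paper uses Theorem \ref{Th13} with some $\gamma\in\bigl(\frac{sp}{p-1},1\bigr)$ to make the near part of $(-\Delta_{\mathbb{H},p})^s u$ absolutely convergent and bounded, and controls the far part by the $L^\infty$ bound and the tail. It then invokes Theorem 1.2 of \cite{MZ21} for the $p$-sub-Laplacian with bounded right-hand side, simply asserting the identification of the nonlocal form with $2\Lambda\int f\varphi\,d\xi$; your truncation and dominated-convergence argument supplies that justification.

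One bookkeeping correction is needed. Because ${\rm Tail}_{p-1,sp,p}$ carries the factor $R^{p}$, the far field contributes $R^{-p}\,{\rm Tail}_{p-1,sp,p}(u;\xi_0,R)^{p-1}$. The correct bound is therefore $|f|\le cR^{-p}\bigl(\|u\|_{L^\infty(B_R(\xi_0))}+{\rm Tail}_{p-1,sp,p}(u;\xi_0,R)\bigr)^{p-1}$ rather than the same expression with $R^{-sp}$. This makes your final scaling step exact ($R^{p'}\cdot R^{-p'}=1$) and leaves \eqref{eq51} unaffected.
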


The paper is organized as follows: In Section 2, we introduce relevant knowledge of the Heisenberg group, function spaces, and several necessary and new lemmas. In Section 3, we employ horizontal difference, along with certain difference estimates and the fractional Sobolev-type inequality on the Heisenberg group, to establish the crucial iteration scheme of Morrey-type for weak solutions to \eqref{eq0} required for the proof of the Theorem \ref{Th13}, i.e. Proposition \ref{Pro41}. In Section 4, we first establish a proposition regarding the gradient integrability of weak solutions by using Caccioppoli-type inequality, i.e. Proposition \ref{Pro42}. Then, by combining local boundedness, the Proposition \ref{Pro41}, the Proposition \ref{Pro42}, an iterative method, and the Morrey inequality, we prove Theorem \ref{Th13}. Finally, Theorem \ref{Th14} is derived by utilizing Theorem \ref{Th13} and Theorem 1.2 from \cite{MZ21}.

In the sequel, we denote by $c$ a generic positive constant which may vary from line to line. Relevant dependencies on parameters shall be emphasised utilizing parentheses, i.e., $c= c(n, p, q)$ means that $c$ depends on $n, p, q$.

\section{Preliminaries}
\label{Section 2}

In this section, we introduce briefly some relevant knowledge of the Heisenberg group ${{\mathbb{H}}^n}$, some function spaces and several necessary lemmas.

\subsection{The Heisenberg group $\mathbb{H}^n$}\label{Section 21}
The Euclidean space ${\mathbb{R}^{2n + 1}},\;n \ge 1$ with the group multiplication
\begin{equation}\label{eq21}
\xi \circ \eta = \left( {{x_1} + {y_1},{x_2} + {y_2}, \cdots ,{x_{2n}} + {y_{2n}},t + \tau + \frac{1}{2}\sum\limits_{i = 1}^n {\left( {{x_i}{y_{n + i}} - {x_{n + i}}{y_i}} \right)} } \right),
\end{equation}
where $\xi = \left( {{x_1},{x_2}, \cdots ,{x_{2n}},t} \right),\;\eta = \left( {{y_1},{y_2}, \cdots ,{y_{2n}},\tau} \right) \in {\mathbb{R}^{2n + 1}},$ leads to the Heisenberg group $\mathbb{H}^n$. The left invariant vector fields on $\mathbb{H}^n$ are of the form
\begin{equation}\label{eq22}
{X_i} = {\partial _{{x_i}}} - \frac{{{x_{n + i}}}}{2}{\partial _t},\;{X_{n + i}} = {\partial _{{x_{n + i}}}} + \frac{{{x_i}}}{2}{\partial _t},\;\;1 \le i \le n,
\end{equation}
and a non-trivial commutator on $\mathbb{H}^n$ is
\[T = {\partial _t} = \left[ {{X_i},{X_{n + i}}} \right] = {X_i}{X_{n + i}} - {X_{n + i}}{X_i},\;\;1 \le i \le n.\]
We call that ${X_1},{X_2}, \cdots ,{X_{2n}}$ are the horizontal vector fields and $T$ the vertical vector field on $\mathbb{H}^n$. Denote the horizontal gradient of a smooth function $u$ on $\mathbb{H}^n$ by
\[{\nabla _H}u = \left( {{X_1}u,{X_2}u, \cdots ,{X_{2n}}u} \right).\]
Denote all possible $k$-th order partial derivatives of $u$ by $D^k_H u$. The Haar measure in $\mathbb{H}^n$ is equivalent to the Lebesgue measure in ${\mathbb{R}^{2n + 1}}$. The Lebesgue measure of a measurable set $E \subset {\mathbb{H}^n}$ is denoted by $\left| E \right|.$

The Carnot-Carath\`{e}odary metric (C-C metric) between two points in $\mathbb{H}^n$ is the shortest length of the horizontal curve joining them, denoted by $d$. The Kor\'{a}nyi ball induced by the C-C metric is
\[{B_R}\left( \xi \right) = \left\{ {\eta \in {\mathbb{H}^n}:d\left( {\eta,\xi} \right) < R} \right\}.\]
The homogeneous dimension of ${{\mathbb{H}}^n}$ is $Q=2n+2$. For $\xi = \left( {{x_1},{x_2}, \cdots ,{x_{2n}},t} \right)\in \mathbb{H}^n,$ its module is defined as
\[{\left\| \xi \right\|_{{\mathbb{H}^n}}} = {\left( {{{\left( {\sum\limits_{i = 1}^{2n} {{x_i}^2} } \right)}^2} + {t^2}} \right)^{\frac{1}{4}}}.\]
The C-C metric $d$ is equivalent to the Kor\`{a}nyi metric
\[d\left( {\xi,\eta} \right) \sim {\left\| {{\xi^{ - 1}} \circ \eta} \right\|_{{\mathbb{H}^n}}}.\]

If $Z$ is a left invariant vector field on $\mathbb{H}^n$, then for some $z = \left( {{z_1},{z_2}, \cdots ,{z_{2n + 1}}} \right) = \left( {z',{z_{2n + 1}}} \right)\in \mathbb{H}^n,$  we write
\[Z = \sum\limits_{l = 1}^{2n} {{z_l}{X_l}}  + {z_{2n + 1}}T.\]
The exponential mapping in canonical coordinates is defined as
\[{{\mathop{\rm e}\nolimits} ^Z} = z.\]
By \eqref{eq21}, it follows the Baker-Campbell-Hausdorff formula: if $Z$ and $Y$ are left invariant vector fields with components $z$ and $y$, then
\begin{equation}\label{eq23}
{{\mathop{\rm e}\nolimits} ^Z}{{\mathop{\rm e}\nolimits} ^Y} = \left( {z',{z_{2n + 1}}} \right) \circ \left( {y',{y_{2n + 1}}} \right) = {{\mathop{\rm e}\nolimits} ^{Z + Y + \frac{1}{2}\left[ {Z,Y} \right]}}.
\end{equation}

For $h \in \mathbb{R}\backslash \left\{ 0 \right\}$, the first order Nirenberg difference of the function $v$ along the left invariant vector field $Z$-direction is defined as
\begin{equation}\label{eq24}
{\Delta _{Z,h}}v\left( x \right) = v\left( {x{e^{hZ}}} \right) - v\left( x \right),
\end{equation}
and the second order Nirenberg difference is defined as
\begin{equation}\label{eq25}
 \Delta _{Z,h}^2v\left( x \right) = v\left( {x{e^{2hZ}}} \right) -2v\left( {x{e^{ hZ}}} \right) +v\left( x \right).
\end{equation}
For more details, see \cite{CL97} and \cite{DA04}.

\subsection{Function spaces on $\mathbb{H}^n$}\label{Section 22}
For $0<\alpha<1$ and $\Omega  \subset {\mathbb{H}^n},$ the H\"{o}lder spaces, also known as the Folland-Stein classes (see \cite{FS74} and \cite{FS82}), are defined as
\[{C^\alpha }\left( \Omega  \right) = \left\{ {u\in{C }\left( \Omega  \right) |\mathop {\sup }\limits_{\xi \neq \eta  \in \Omega } \frac{{\left| {u\left( \xi \right) - u\left( \eta \right)} \right|}}{{\left\| {{\eta ^{ - 1}} \circ \xi } \right\|_{{{\mathbb{H}}^n}}^\alpha }} < \infty } \right\},\]
with their local spaces given by
\[C _{loc}^\alpha \left( \Omega  \right) = \left\{ {u|\varphi u \in {C ^\alpha }\left( \Omega  \right),\;\;\varphi  \in C_0^\infty \left( \Omega  \right)} \right\}.\]
If $u,{D _H}u, \cdots ,D _H^ku \in {C ^\alpha }\left( \Omega  \right), \;k\in \mathbb{N}^+$, then we say $u \in {C ^{k,\alpha }}\left( \Omega  \right)$. It is worth noting that ${C ^{k,\alpha }}\left( \Omega  \right)$ forms a Banach space under the norm
\[{\left\| u \right\|_{{C^{k,\alpha }}\left( \Omega  \right)}} = \sum\limits_{ k}\mathop {\sup }\limits_{\xi \neq \eta  \in \Omega } \frac{{\left| {D _H^ku\left( \xi  \right) - D _H^ku\left( \eta  \right)} \right|}}{{\left\| {{\eta ^{ - 1}} \circ \xi } \right\|_{{{\mathbb{H}}^n}}^\alpha }} + \sum\limits_{0 \le m \le k} {{{\left\| {D _H^mu} \right\|}_{C\left( \Omega  \right)}}} .\]

For $1 \le p < \infty $, $k \in {\mathbb{N}^ + }$ and $\Omega  \subset {\mathbb{H}^n},$ the horizontal Sobolev space $H{W^{k,p}}\left( \Omega  \right)$ is defined as
\[H{W^{k,p}}\left( \Omega  \right) = \left\{ {u \in {L^p}\left( \Omega  \right):{\nabla _H}u \in {L^p}\left( \Omega  \right),\nabla _H^2u \in {L^p}\left( \Omega  \right), \cdots ,\nabla _H^ku \in {L^p}\left( \Omega  \right)} \right\},\]
which is a Banach space under the norm
\[{\left\| u \right\|_{H{W^{k,p}}\left( \Omega  \right)}} = {\left\| u \right\|_{{L^p}\left( \Omega  \right)}} + \sum\limits_{m = 1}^k {{{\left\| {\nabla _H^mu} \right\|}_{{L^p}\left( \Omega  \right)}}} .\]
The local horizontal Sobolev space $HW_{loc}^{k,p}\left( \Omega  \right)$ is defined as
\[HW_{loc}^{k,p}\left( \Omega  \right): = \left\{ {u:u \in H{W^{k,p}}\left( {\Omega '} \right),\forall \Omega ' \subset  \subset \Omega } \right\}\]
and the space $HW_0^{k,p}\left( \Omega  \right)$ is the closure of $C_0^\infty \left( \Omega  \right)$ in $H{W^{k,p}}\left( \Omega  \right)$.

For $1 \le p < \infty $, $s \in \left( {0,1} \right)$, the Gagliardo semi-norm of $u$ on ${{\mathbb{H}}^n}$ is defined as
\[{\left[ u \right]_{H{W^{s,p}}\left( {{{\mathbb{H}}^n}} \right)}} = {\left( {\int_{{{\mathbb{H}}^n}} {\int_{{{\mathbb{H}}^n}} {\frac{{{{\left| {u\left( \xi  \right) - u\left(\eta  \right)} \right|}^p}}}{{\| {{\eta^{ - 1}} \circ \xi } \|_{{{\mathbb{H}}^n}}^{Q + sp}}}\,d\xi } d\eta } } \right)^{\frac{1}{p}}},\]
and the fractional Sobolev spaces $H{W^{s,p}}\left( {{{\mathbb{H}}^n}} \right)$ on the Heisenberg group are defined as
\[H{W^{s,p}}\left( {{{\mathbb{H}}^n}} \right) = \left\{ {u \in {L^p}\left( {{{\mathbb{H}}^n}} \right):{{\left[ u \right]}_{H{W^{s,p}}\left( {{{\mathbb{H}}^n}} \right)}} < \infty } \right\}\]
endowed with the natural fractional norm
\[{\| u \|_{H{W^{s,p}}\left( {{{\mathbb{H}}^n}} \right)}} = {\left( {\| u \|_{{L^p}\left( {{{\mathbb{H}}^n}} \right)}^p + \left[ u \right]_{H{W^{s,p}}\left( {{{\mathbb{H}}^n}} \right)}^p} \right)^{\frac{1}{p}}}.\]

We define the tail space
\[L_\alpha ^q\left( {{{\mathbb{H}}^n}} \right) = \left\{ {u \in L_{loc}^q\left( {{{\mathbb{H}}^n}} \right):\int_{{{\mathbb{H}}^n}} {\frac{{{{\left| u \right|}^q}}}{{1 + {{\left\| \xi  \right\|}^{Q + \alpha }_{\mathbb{H}^n}}}}d\xi }  < \infty } \right\},\;q > 0\;{\rm{and}}\;\alpha  > 0.\]
For any $\xi_0 \in\mathbb{H}^n$, $R>0$, $\beta>0$ and $u \in L_\alpha^q(\mathbb{H}^n)$, we define
\begin{equation}\label{eq26}
 {\rm{Tai}}{{\rm{l}}_{q,\alpha ,\beta }}\left( {u;{\xi _0},R} \right) = {\left[ {{R^\beta }\int_{{{\mathbb{H}}^n}\backslash {B_R}\left( {{\xi _0}} \right)} {\frac{{{{\left| u \right|}^q}}}{{{{\left\| {\xi _0^{ - 1} \circ \xi } \right\|}^{Q + \alpha }_{\mathbb{H}^n}}}}d\xi } } \right]^{\frac{1}{q}}}.
\end{equation}

\subsection{Several necessary lemmas}\label{Section 23}

\begin{lemma}[\cite{CL97}]\label{Le22}
Let $\Omega  \subset {\mathbb{H}^n}$ be an open set, $K \subset \Omega $ a
compact set, $Z$ a left invariant vector field and $v \in L_{loc}^p\left( \Omega  \right)$ for $1 \le p < \infty $. If there
exist the positive constants $\tilde h$ and $c$ such that
\[\mathop {\sup }\limits_{0 < \left| h \right| < \tilde h} \int_K {{{\left|  \frac{{v\left( {x{e^{hZ}}} \right) - v\left( x \right)}}{{{{\left| h \right|}}}} \right|}^p}d\xi}  \le {c^p},\]
then
\[Zv \in {L^p}\left( K \right)\; \hbox{and}\;{\left\| {Zv} \right\|_{{L^p}\left( K \right)}} \le c.\]
Conversely, if $Zv \in {L^p}\left( K \right)$, then for some $\tilde h > 0$,
\[\mathop {\sup }\limits_{0 < \left| h \right| < \tilde h} \int_K {{{\left|  \frac{{v\left( {x{e^{hZ}}} \right) - v\left( x \right)}}{{{{\left| h \right|}}}} \right|}^p}d\xi}  \le {\left( {2{{\left\| {Zv} \right\|}_{{L^p}\left( K \right)}}} \right)^p}.\]
\end{lemma}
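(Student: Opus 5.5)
The statement is Lemma \ref{Le22}, the Heisenberg analogue of the classical characterization of Sobolev functions by difference quotients. Fix the left invariant field $Z$. The plan is to exploit that right translation $x\mapsto xe^{hZ}$ is the flow of $Z$ and preserves Haar (Lebesgue) measure. Concretely, $\frac{d}{dh}v(xe^{hZ})=(Zv)(xe^{hZ})$ for smooth $v$. Since the flow is a measure preserving diffeomorphism, the standard Euclidean arguments carry over almost verbatim.

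\emph{Converse direction.} First take $v$ smooth and let $K'$ be a compact neighbourhood of $K$ in $\Omega$. Choose $\tilde h$ so small that $xe^{\theta Z}\in K'$ for all $x\in K$ and $|\theta|\le\tilde h$; this is possible by compactness and continuity of the group law. Write
\[v(xe^{hZ})-v(x)=\int_0^h (Zv)(xe^{\theta Z})\,d\theta .\]
Apply Hölder in $\theta$ and Fubini, then use the change of variables $y=xe^{\theta Z}$, which has unit Jacobian. This gives
\[\int_K|\Delta_{Z,h}v/h|^p\,d\xi\le \sup_{|\theta|\le|h|}\int_{Ke^{\theta Z}}|Zv|^p\,dy .\]
Then approximate a general $v$ (with $Zv\in L^p$ locally) by mollification. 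Use group convolution $v_\varepsilon=\phi_\varepsilon * v$ with the mollifier acting on the left, so that it commutes with the left invariant field $Z$. Pass to the limit.

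To obtain the constant $2\|Zv\|_{L^p(K)}$ rather than a norm over a slightly larger set, use continuity of translation in $L^p$: $\int_{Ke^{\theta Z}}|Zv|^p\to\int_K|Zv|^p$ as $\theta\to0$. Shrinking $\tilde h$ then makes this quantity at most $2^p\|Zv\|^p_{L^p(K)}$. (If $Zv$ is only known on $K$, one interprets the lemma with $Zv\in L^p$ on a neighbourhood, as in \cite{CL97}.)

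\emph{Direct direction.} The family $w_h=\Delta_{Z,h}v/|h|$ is bounded in $L^p(K)$ by $c$. For $1<p<\infty$, extract a subsequence $h_j\to0$ with $w_{h_j}\rightharpoonup w$ weakly in $L^p(K)$ and $\|w\|_{L^p(K)}\le c$. To identify $w=Zv$ in the distributional sense, fix $\varphi\in C_0^\infty(\mathrm{int}\,K)$ and use discrete integration by parts via the measure preserving substitution $y=xe^{hZ}$:
\[\int v(xe^{hZ})\varphi(x)\,dx=\int v(y)\varphi(ye^{-hZ})\,dy .\]
This turns $\int w_h\varphi$ into $-\int v\,\frac{\varphi(xe^{-hZ})-\varphi(x)}{h}$ (up to the sign of $h$). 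That expression converges to $-\int vZ\varphi$, using $v\in L^1_{loc}$ and uniform convergence of the smooth difference quotients of $\varphi$. Hence $Zv=w\in L^p$ with the stated bound. For $p=1$ weak compactness fails. There one gets a measure, and the statement must be read as for $p>1$ (or via weak-$*$ compactness, in the standard caveat).

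\emph{Main obstacle.} I expect the main difficulty to be the bookkeeping near $\partial K$. The difference quotients on $K$ involve values of $v$ outside $K$, and the identification step needs test functions supported inside $K$. This is exactly where compactness of $K\subset\Omega$ and the choice of $\tilde h$ matter.
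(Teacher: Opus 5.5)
The paper gives no proof of this lemma; it quotes it from \cite{CL97}, so there is no internal argument to compare with. Your proof is the standard one and it is sound. Right translation $x\mapsto xe^{\theta Z}$ is the flow of the left invariant field $Z$ and has unit Jacobian. The left group convolution $\phi_\varepsilon * v$ satisfies $Z(\phi_\varepsilon*v)=\phi_\varepsilon*Zv$. After that, the Euclidean difference-quotient arguments go through verbatim: the fundamental theorem of calculus plus H\"older and Fubini for the converse, and weak compactness plus discrete integration by parts (the formal adjoint of $Z$ being $-Z$) for the direct part.

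The two caveats you flagged are genuine defects of the statement as transcribed. For $p=1$ the direct implication is false: take $v=\chi_{\{x_1>0\}}$ and $Z=X_1$. Its $L^1$ difference quotients on any bounded set are bounded, while $X_1v$ is the surface measure on $\{x_1=0\}$. The converse also needs $Zv\in L^p$ on a neighbourhood of $K$, since the quotients on $K$ involve values of $v$ outside $K$.

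There is one further case your argument does not cover, and there the statement is actually false. Your step ``shrinking $\tilde h$ makes $\sup_{|\theta|\le\tilde h}\int_{Ke^{\theta Z}}|Zv|^p\le 2^p\|Zv\|^p_{L^p(K)}$'' relies on continuity of translation. That only gives convergence to $\|Zv\|^p_{L^p(K)}$, which leaves room below $2^p\|Zv\|^p_{L^p(K)}$ only when $\|Zv\|_{L^p(K)}>0$. If $Zv=0$ a.e. on $K$ but not near $K$, the asserted bound is $0$ and it fails.

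Here is a concrete counterexample. Take $K=\overline{B_1}\subset\{|x_1|\le 1\}$, $Z=X_1$ and $v=\max\{x_1-1,0\}$. Then $X_1v=\chi_{\{x_1>1\}}$ vanishes on $K$. But for $h>0$ one has $\Delta_{X_1,h}v=\max\{x_1+h-1,0\}$ on $K$, which is positive on the set $K\cap\{x_1>1-h\}$ of positive measure.

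So you should either assume $\|Zv\|_{L^p(K)}>0$ to get the factor $2$, or state the converse as $\sup_{0<|h|<\tilde h}\int_K|\Delta_{Z,h}v/h|^p\,d\xi\le\|Zv\|^p_{L^p(K')}$ for a compact neighbourhood $K'$ of $K$ in $\Omega$. The second form is exactly what your argument yields without its last step. It is also the form the paper actually uses later, for instance in passing from \eqref{eq479} to \eqref{eq480}.

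Finally, your direct argument identifies $Zv$ only on $\mathrm{int}\,K$. This is harmless when $|\partial K|=0$, as for the balls to which the lemma is applied.
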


\begin{lemma}[Morrey inequality (\cite{F75}, Theorem 5.15)]\label{Le23}
Let $1<p<\infty$, $k>\frac{Q}{p}$. Then for any $u \in HW^{k,p} ({{\mathbb{H}}^n})$, we have
\[{\left\| u \right\|_{{C^{k - \frac{Q}{p}}}}({{\mathbb{H}}^n})} \le c{\left\| u \right\|_{H{W^{k,p}}}({{\mathbb{H}}^n})},\]
where $c=c(Q,p)>0.$
\end{lemma}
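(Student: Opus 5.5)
This is the Morrey embedding on $\mathbb{H}^n$, quoted from Folland; the plan is the classical Euclidean argument with Kor\'anyi balls in place of Euclidean balls and horizontal curves in place of segments.

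\emph{Step 1: the case $k=1$, $p>Q$.} The core estimate is the sub-Riemannian Poincar\'e (Jerison) inequality on a Kor\'anyi ball $B_r$:
\[
\fint_{B_r}|u-u_{B_r}|\,d\xi\le c\,r\fint_{B_{\lambda r}}|\nabla_H u|\,d\xi .
\]
Alternatively one can use the pointwise representation
\[
|u(\xi)-u_{B_r}|\le c\int_{B_{\lambda r}}\frac{|\nabla_H u(\eta)|}{\|\eta^{-1}\circ\xi\|_{\mathbb{H}^n}^{Q-1}}\,d\eta ,
\]
which follows from the fundamental solution of the sub-Laplacian, or from integrating along horizontal geodesics and averaging over the ball. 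By left invariance and the dilations $\delta_r$, it suffices to prove either estimate for the unit ball.

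\emph{Step 2: the H\"older bound.} Apply H\"older's inequality to the kernel. Since $p>Q$, the integral $\int_{B_{\lambda r}}\|\eta^{-1}\circ\xi\|_{\mathbb{H}^n}^{-(Q-1)p'}d\eta$ converges and is comparable to $r^{Q-(Q-1)p'}$. Raising to the power $1/p'$ gives
\[
|u(\xi)-u_{B_r}|\le c\,r^{1-Q/p}\|\nabla_H u\|_{L^p}.
\]
For $\xi,\eta$ with $r=\|\eta^{-1}\circ\xi\|_{\mathbb{H}^n}$, take a ball of radius about $r$ containing both points and apply the triangle inequality. This yields
\[
|u(\xi)-u(\eta)|\le c\,r^{1-Q/p}\|\nabla_H u\|_{L^p}.
\]
For the sup bound, apply the same estimate with $r=1$ and bound $|u_{B_1}|\le c\|u\|_{L^p}$. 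Together these give $\|u\|_{C^{1-Q/p}}\le c\|u\|_{HW^{1,p}}$.

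\emph{Step 3: higher $k$.} Induct on $k$, using Folland--Stein Sobolev embeddings $HW^{j,p}\hookrightarrow HW^{j-1,p^*}$ with $1/p^*=1/p-1/Q$, while $p<Q$. Iterate until the integrability exponent exceeds $Q$, then apply Step 1. When $k-Q/p$ exceeds $1$, apply the argument to $D_H^m u$ for the appropriate $m$; this matches the Folland--Stein $C^{k,\alpha}$ classes defined in Section~\ref{Section 22}.

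Two borderline cases need care. If $Q/p$ is an integer, the exponent is not strictly fractional; one lowers it slightly, or accepts a Zygmund-type interpretation as Folland does. If an intermediate exponent equals $Q$ exactly, perturb $p$ downward using local integrability.

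The main obstacle is the Poincar\'e/representation inequality in Step 1. Straight segments are not admissible, so one needs connectivity by horizontal curves of length comparable to the distance (Chow's theorem plus homogeneity). One also needs the change of variables along these curves to have a controlled Jacobian. I would follow Jerison's argument, or simply cite Folland--Stein's Theorem 5.15 as the paper does.
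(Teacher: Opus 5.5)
The paper gives no proof here; the lemma is quoted from Folland's Theorem 5.15. Your outline is a correct proof, by a genuinely different route from Folland's.

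Folland works inside his theory of Sobolev spaces $S^p_\alpha$ and Lipschitz spaces $\Gamma_\beta$, both built from fractional powers of the sub-Laplacian. In that framework, $HW^{k,p}(\mathbb{H}^n)$ consists of Bessel potentials of $L^p$ functions. The embedding then becomes a mapping property of these potential operators from $L^p$ into $\Gamma_{k-Q/p}$.

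You take the metric route. A first-order estimate (the pointwise Riesz-potential bound, or Jerison's Poincar\'e inequality) settles the case $k=1$, $p>Q$. Sobolev iteration $1/p_j=1/p-j/Q$ up to $m=\lfloor Q/p\rfloor$ gives $p_m>Q$. Applying Step 1 to $D_H^j u$, $j\le k-m-1$, then yields the exponent $(k-m-1)+(m+1-Q/p)=k-Q/p$. This bookkeeping is right. Your two borderline cases are in fact the same case: some $p_j$ equals $Q$ exactly when $Q/p\in\mathbb{N}$.

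Each route has its advantage. Yours is more elementary, and its first-order part carries over verbatim to H\"ormander vector fields or to doubling spaces with a Poincar\'e inequality. Folland's handles the whole scale at once, including the integer endpoint. There your perturbation only gives $C^{k-Q/p-\varepsilon}$, and the Zygmund-type statement is beyond your argument. Since the paper only defines $C^\alpha$ for $0<\alpha<1$, that case has to be read in Folland's sense anyway.

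Two details need tightening.
\begin{enumerate}
\item Step 2 applies H\"older's inequality to a kernel, so it presupposes the representation formula. If you start from the $L^1$ Poincar\'e inequality instead, you must first derive that formula by telescoping over dyadic balls, or run a Campanato chain directly, summing $(2^{-j}r)^{1-Q/p}$.
\item The pointwise bounds hold for smooth functions or at Lebesgue points. To produce the H\"older representative you need the density of smooth functions in $HW^{1,p}(\mathbb{H}^n)$.
\end{enumerate}

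For this paper only $k=1$, $p>Q$ is ever used (in the proof of Theorem \ref{Th13}, with $q_{i_\infty}>Q$), and your Steps 1--2 cover that completely. Note also that the constant must depend on $k$, not only on $(Q,p)$ as the statement says.
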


%xuyaogai
\begin{definition}\label{De26}
Let $u \in HW_{loc}^{1,p}\left( \Omega  \right)\cap{L_{sp} ^{p-1}\left( {{{\mathbb{H}}^n}} \right)}$. The function $u$ is called a weak subsolution (supersolution) to \eqref{eq0} if for any $\Omega ' \subset  \subset \Omega $ and non-negative test function $\phi  \in HW_0^{1,p}\left( {\Omega '} \right)$, the inequality
\begin{align}\label{eq22}
   & \int_{\Omega '} {{{\left| {{\nabla _H}u} \right|}^{p - 2}}{\nabla _H}u \cdot {\nabla _H}\phi d\xi } \nonumber \\
  +& \Lambda \int_{{{\mathbb{H}}^n}} {\int_{{{\mathbb{H}}^n}} {\frac{{{{\left| {u\left( \xi  \right) - u\left( \eta  \right)} \right|}^{p - 2}}\left( {u\left( \xi  \right) - u\left( \eta  \right)} \right)\left( {\phi \left( \xi  \right) - \phi \left( \eta  \right)} \right)}}{{\left\| {{\eta ^{ - 1}} \circ \xi } \right\|_{{{\mathbb{H}}^n}}^{Q + sp}}}d\xi d\eta } }  \le \left(  \ge  \right)0
\end{align}
holds. If $u$ is both a weak subsolution and a weak supersolution to \eqref{eq0}, then $u$ is a weak solution to \eqref{eq0}.
\end{definition}

For convenience, we denote
\[{J_p}\left( a \right) = {\left| a \right|^{p - 1}}a,\;a \in \mathbb{R},\;\;d\mu  = \frac{{d\xi d\eta }}{{\left\| {{\eta ^{ - 1}} \circ \xi } \right\|_{{{\mathbb{H}}^n}}^{Q + sp}}}.\]

%Le26来自于\cite{BLS18}引理2.2的推广
\begin{lemma}\label{Le29}
Let $\alpha>0$ and $0<q<\infty$. For every $0<r<R$ and $\xi_0 \in \mathbb{H}^n$, we have
\[\mathop {\sup }\limits_{\xi  \in {B_r}\left( {{\xi _0}} \right)} {\int _{{{\mathbb{H}}^n}\backslash {B_R}\left( {{\xi _0}} \right)}}\frac{{{{\left| {u\left( \eta  \right)} \right|}^{q}}}}{{\left\| {{\eta ^{ - 1}} \circ \xi } \right\|_{{{\mathbb{H}}^n}}^{Q + \alpha }}}d\eta  \le {\left( {\frac{R}{{R - r}}} \right)^{Q + \alpha }}{\int _{{{\mathbb{H}}^n}\backslash {B_R}\left( {{\xi _0}} \right)}}\frac{{{{\left| {u\left( \eta  \right)} \right|}^{q}}}}{{\left\| {{\eta ^{ - 1}} \circ {\xi _0}} \right\|_{{{\mathbb{H}}^n}}^{Q + \alpha}}}d\eta .\]
\end{lemma}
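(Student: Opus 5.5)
The statement to prove is Lemma \ref{Le29}: a pointwise comparison of kernels, integrated against $|u|^q$. The plan is to show that for every $\xi\in B_r(\xi_0)$ and every $\eta\in\mathbb{H}^n\backslash B_R(\xi_0)$,
\[
\left\| {\eta ^{ - 1}} \circ \xi \right\|_{\mathbb{H}^n} \ge \frac{R-r}{R}\left\| {\eta ^{ - 1}} \circ \xi_0 \right\|_{\mathbb{H}^n}.
\]
Raising this to the power $-(Q+\alpha)$, multiplying by $|u(\eta)|^q\ge0$, integrating over $\mathbb{H}^n\backslash B_R(\xi_0)$ and then taking the supremum over $\xi\in B_r(\xi_0)$ gives the claim. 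Since the right-hand side does not depend on $\xi$, no further uniformity argument is needed, and $q$ and $\alpha$ play no role beyond positivity of the exponent.

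To prove the pointwise inequality I would use the triangle inequality for the Kor\'anyi gauge, $\|\xi\circ\zeta\|_{\mathbb{H}^n}\le\|\xi\|_{\mathbb{H}^n}+\|\zeta\|_{\mathbb{H}^n}$, together with $\|\zeta^{-1}\|_{\mathbb{H}^n}=\|\zeta\|_{\mathbb{H}^n}$. These make $\rho(\xi,\eta)=\|\eta^{-1}\circ\xi\|_{\mathbb{H}^n}$ a genuine left-invariant metric (this is Cygan's theorem). I would take the balls $B_R(\xi_0)$ to be the Kor\'anyi balls for $\rho$, which matches the paper's name for them. Then $\rho(\eta,\xi_0)\ge R$ and $\rho(\xi,\xi_0)<r$, so
\[
\rho(\eta,\xi)\ge \rho(\eta,\xi_0)-\rho(\xi,\xi_0)\ge \rho(\eta,\xi_0)-r\ge \rho(\eta,\xi_0)-\frac{r}{R}\rho(\eta,\xi_0)=\frac{R-r}{R}\rho(\eta,\xi_0).
\]
The third step uses $r\le \frac{r}{R}\rho(\eta,\xi_0)$, which holds because $\rho(\eta,\xi_0)\ge R$. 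This yields the constant $(R/(R-r))^{Q+\alpha}$ exactly as stated.

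The only real obstacle is the choice of metric. If the balls are instead defined by the Carnot--Carath\'eodory distance $d$, which is merely equivalent to $\rho$, the same argument only gives the constant up to a multiplicative factor depending on the equivalence constants. To get the clean constant I would read $B_R$ as Kor\'anyi balls, as the name suggests, and invoke the exact triangle inequality for the Kor\'anyi gauge. Otherwise I would state the lemma with an extra constant $c(Q)$, which is harmless for the later tail estimates.
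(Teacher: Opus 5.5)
Your proposal is correct and is essentially the paper's proof. Both use the gauge triangle inequality to get the pointwise bound $\|\eta^{-1}\circ\xi\|_{\mathbb{H}^n}\ge\frac{R-r}{R}\|\eta^{-1}\circ\xi_0\|_{\mathbb{H}^n}$ for $\xi\in B_r(\xi_0)$, $\eta\notin B_R(\xi_0)$, and then integrate. Your remark about reading $B_R$ as Kor\'anyi-gauge balls is apt: the paper defines its balls through the C-C distance, but its proof implicitly uses $\|\xi_0^{-1}\circ\xi\|_{\mathbb{H}^n}<r$ and $\|\eta^{-1}\circ\xi_0\|_{\mathbb{H}^n}\ge R$, which is that same reading.
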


\begin{proof}
For ${\xi  \in {B_r}\left( {{\xi _0}} \right)}$ and $\eta \in {{{\mathbb{H}}^n}\backslash {B_R}\left( {{\xi _0}} \right)}$, we derive
\begin{align*}
   {\left\| {{\eta ^{ - 1}} \circ \xi } \right\|_{{{\mathbb{H}}^n}}}& \ge {\left\| {{\eta ^{ - 1}} \circ {\xi _0}} \right\|_{{{\mathbb{H}}^n}}} - {\left\| {\xi _0^{ - 1} \circ \xi } \right\|_{{{\mathbb{H}}^n}}} \ge {\left\| {{\eta ^{ - 1}} \circ {\xi _0}} \right\|_{{{\mathbb{H}}^n}}} - r \\
   &  \ge {\left\| {{\eta ^{ - 1}} \circ {\xi _0}} \right\|_{{{\mathbb{H}}^n}}} - \frac{r}{R}{\left\| {{\eta ^{ - 1}} \circ {\xi _0}} \right\|_{{{\mathbb{H}}^n}}} = \frac{{R - r}}{R}{\left\| {{\eta ^{ - 1}} \circ {\xi _0}} \right\|_{{{\mathbb{H}}^n}}}.
\end{align*}
This completes the proof.
\end{proof}

%Le26来自于\cite{BLS18}引理2.3的推广
\begin{lemma}\label{Le210}
Let $\alpha>0$ and $0<q<\infty$. Assume ${B_r}\left( {{\xi _0}} \right) \subset {B_R}\left( {{\xi _1}} \right)$. Then for every $u \in L_\alpha ^q\left( {{{\mathbb{H}}^n}} \right) $, we have
\begin{align*}
   & {\int _{{{\mathbb{H}}^n}\backslash {B_r}\left( {{\xi _0}} \right)}}\frac{{{{\left| {u\left( \eta  \right)} \right|}^q}}}{{\left\| {{\eta ^{ - 1}} \circ {\xi _0}} \right\|_{{{\mathbb{H}}^n}}^{Q + \alpha }}}d\eta  \\
   \le&  {\left( {\frac{R}{{R - {{\left\| {\xi _1^{ - 1} \circ {\xi _0}} \right\|}_{{{\mathbb{H}}^n}}}}}} \right)^{Q + \alpha }}{\int _{{{\mathbb{H}}^n}\backslash {B_R}\left( {{\xi _1}} \right)}}\frac{{{{\left| {u\left( \eta  \right)} \right|}^q}}}{{\left\| {{\eta ^{ - 1}} \circ {\xi _1}} \right\|_{{{\mathbb{H}}^n}}^{Q + \alpha }}}d\eta  + {r^{ - Q-\alpha}}\left\| u \right\|_{{L^q}\left( {{B_R}\left( {{\xi _1}} \right)} \right)}^q.
\end{align*}
If in addition ${u \in L_{loc}^m\left( {{{\mathbb{H}}^n}} \right)}$ for some $q<m \le \infty$, then
\begin{align*}
   {\int _{{{\mathbb{H}}^n}\backslash {B_r}\left( {{\xi _0}} \right)}}\frac{{{{\left| {u\left( \eta  \right)} \right|}^q}}}{{\left\| {{\eta ^{ - 1}} \circ {\xi _0}} \right\|_{{{\mathbb{H}}^n}}^{Q + \alpha }}}d\eta  \le& {\left( {\frac{R}{{R - {{\left\| {\xi _0^{ - 1} \circ {\xi _1}} \right\|}_{{{\mathbb{H}}^n}}}}}} \right)^{Q + \alpha }}{\int _{{{\mathbb{H}}^n}\backslash {B_R}\left( {{\xi _1}} \right)}}\frac{{{{\left| {u\left( \eta  \right)} \right|}^q}}}{{\left\| {{\eta ^{ - 1}} \circ {\xi _1}} \right\|_{{{\mathbb{H}}^n}}^{Q + \alpha }}}d\eta  \\
   &  + c\left( {Q,q,m,\alpha } \right){r^{ - \frac{{\alpha m + Qq}}{m}}}\left\| u \right\|_{{L^m}\left( {{B_R}\left( {{\xi _1}} \right)} \right)}^q.
\end{align*}
\end{lemma}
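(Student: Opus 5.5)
Both inequalities come from splitting the exterior region $\mathbb{H}^n\backslash B_r(\xi_0)$ into the part outside $B_R(\xi_1)$ and the part inside $B_R(\xi_1)\backslash B_r(\xi_0)$. Write
\[\int_{\mathbb{H}^n\backslash B_r(\xi_0)} = \int_{\mathbb{H}^n\backslash B_R(\xi_1)} + \int_{B_R(\xi_1)\backslash B_r(\xi_0)} ,\]
which is legitimate since $B_r(\xi_0)\subset B_R(\xi_1)$ gives $\mathbb{H}^n\backslash B_R(\xi_1)\subset \mathbb{H}^n\backslash B_r(\xi_0)$. As in Lemma \ref{Le29}, I treat $\|\cdot\|_{\mathbb{H}^n}$ as satisfying the triangle inequality and identify the balls with Kor\'anyi balls; otherwise constants depending on the equivalence of metrics would enter.

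On the outer piece, fix $\eta\notin B_R(\xi_1)$, so $\|\eta^{-1}\circ\xi_1\|\ge R$. Repeat the triangle-inequality argument of Lemma \ref{Le29}:
\[\|\eta^{-1}\circ\xi_0\|\ge \|\eta^{-1}\circ\xi_1\|-\|\xi_1^{-1}\circ\xi_0\| \ge \Big(1-\tfrac{\|\xi_1^{-1}\circ\xi_0\|}{R}\Big)\|\eta^{-1}\circ\xi_1\|.\]
The factor is positive because $B_r(\xi_0)\subset B_R(\xi_1)$ forces $\|\xi_1^{-1}\circ\xi_0\|<R$. Raising to the power $-(Q+\alpha)$ gives exactly the first term. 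For the second inequality the quantity $\|\xi_0^{-1}\circ\xi_1\|$ appears, which equals $\|\xi_1^{-1}\circ\xi_0\|$ by symmetry of the Kor\'anyi gauge under inversion.

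On the inner piece, $\eta\notin B_r(\xi_0)$ gives $\|\eta^{-1}\circ\xi_0\|\ge r$. The kernel is therefore bounded by $r^{-Q-\alpha}$, and the integral is at most $r^{-Q-\alpha}\|u\|^q_{L^q(B_R(\xi_1))}$. This proves the first inequality.

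For the second inequality, with $u\in L^m_{loc}$ and $q<m$, I instead apply H\"older's inequality on $B_R(\xi_1)\backslash B_r(\xi_0)$ with exponents $m/q$ and $m/(m-q)$:
\[\int \frac{|u|^q}{\|\eta^{-1}\circ\xi_0\|^{Q+\alpha}} \le \|u\|^q_{L^m(B_R(\xi_1))}\Big(\int_{\mathbb{H}^n\backslash B_r(\xi_0)} \|\eta^{-1}\circ\xi_0\|^{-(Q+\alpha)\frac{m}{m-q}}d\eta\Big)^{\frac{m-q}{m}}.\]
Since $(Q+\alpha)\frac{m}{m-q}>Q$, the last integral converges. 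By the polar/dilation structure ($|B_\rho|=c\rho^Q$) it equals $c\, r^{Q-(Q+\alpha)\frac{m}{m-q}}$. Raising to $\frac{m-q}{m}$ gives $r^{Q\frac{m-q}{m}-(Q+\alpha)} = r^{-\frac{\alpha m+Qq}{m}}$, which is the claimed power. When $m=\infty$ one uses the sup bound directly and the same computation with exponent $1$.

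The only nonroutine point is this exterior integral of a power of the gauge. I would handle it by the layer-cake formula, using the homogeneity of the Haar measure under dilations.
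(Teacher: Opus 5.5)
Your proposal is correct and follows the paper's proof essentially step for step. It uses the same splitting into $\mathbb{H}^n\backslash B_R(\xi_1)$ and $B_R(\xi_1)\backslash B_r(\xi_0)$, the same triangle-inequality comparison of $\|\eta^{-1}\circ\xi_0\|_{\mathbb{H}^n}$ with $\|\eta^{-1}\circ\xi_1\|_{\mathbb{H}^n}$ on the outer piece, the bound $r^{-Q-\alpha}$ on the inner piece, and H\"older with exponents $m/q$, $m/(m-q)$ for the second estimate. Your remarks on the inversion symmetry $\|\xi_0^{-1}\circ\xi_1\|_{\mathbb{H}^n}=\|\xi_1^{-1}\circ\xi_0\|_{\mathbb{H}^n}$ and on the case $m=\infty$ just make explicit points the paper leaves implicit.
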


\begin{proof}
Note that
\[{\left\| {{\eta ^{ - 1}} \circ {\xi _0}} \right\|_{{{\mathbb{H}}^n}}} \ge {\left\| {{\eta ^{ - 1}} \circ {\xi _1}} \right\|_{{{\mathbb{H}}^n}}} - {\left\| {\xi _1^{ - 1} \circ {\xi _0}} \right\|_{{{\mathbb{H}}^n}}} \ge \frac{{R - {{\left\| {\xi _1^{ - 1} \circ {\xi _0}} \right\|}_{{{\mathbb{H}}^n}}}}}{R}{\left\| {{\eta ^{ - 1}} \circ {\xi _1}} \right\|_{{{\mathbb{H}}^n}}}, \;\rm{for}\; \rm{every}\; \eta  \notin {B_R}\left( {{\xi _1}} \right),\]
and
\[{\left\| {{\eta ^{ - 1}} \circ {\xi _0}} \right\|_{{{\mathbb{H}}^n}}} \ge r, \;\rm{for}\; \rm{every}\; \eta  \notin {B_r}\left( {{\xi _0}} \right),\]
we derive
\begin{align*}
   &{\int _{{{\mathbb{H}}^n}\backslash {B_r}\left( {{\xi _0}} \right)}}\frac{{{{\left| {u\left( \eta  \right)} \right|}^q}}}{{\left\| {{\eta ^{ - 1}} \circ {\xi _0}} \right\|_{{{\mathbb{H}}^n}}^{Q + \alpha }}}d\eta = {\int _{{{\mathbb{H}}^n}\backslash {B_R}\left( {{\xi _1}} \right)}}\frac{{{{\left| {u\left( \eta  \right)} \right|}^q}}}{{\left\| {{\eta ^{ - 1}} \circ {\xi _0}} \right\|_{{{\mathbb{H}}^n}}^{Q + \alpha }}}d\eta  + {\int _{{B_R}\left( {{\xi _1}} \right)\backslash {B_r}\left( {{\xi _0}} \right)}}\frac{{{{\left| {u\left( \eta  \right)} \right|}^q}}}{{\left\| {{\eta ^{ - 1}} \circ {\xi _0}} \right\|_{{{\mathbb{H}}^n}}^{Q + \alpha }}}d\eta  \\
   \le&  {\left( {\frac{R}{{R - {{\left\| {\xi _0^{ - 1} \circ {\xi _1}} \right\|}_{{{\mathbb{H}}^n}}}}}} \right)^{Q + \alpha }}{\int _{{{\mathbb{H}}^n}\backslash {B_R}\left( {{\xi _1}} \right)}}\frac{{{{\left| {u\left( \eta  \right)} \right|}^q}}}{{\left\| {{\eta ^{ - 1}} \circ {\xi _1}} \right\|_{{{\mathbb{H}}^n}}^{Q + \alpha }}}d\eta  + {r^{ - Q - \alpha }}\left\| u \right\|_{{L^q}\left( {{B_R}\left( {{\xi _1}} \right)} \right)}^q.
\end{align*}

In order to obtain the second inequality, we use H\"{o}lder's inequality to get
\begin{align*}
   {\int _{{B_R}\left( {{\xi _1}} \right)\backslash {B_r}\left( {{\xi _0}} \right)}}\frac{{{{\left| {u\left( \eta  \right)} \right|}^q}}}{{\left\| {{\eta ^{ - 1}} \circ {\xi _0}} \right\|_{{{\mathbb{H}}^n}}^{Q + \alpha }}}d\eta& \le \left\| u \right\|_{{L^m}\left( {{B_R}\left( {{\xi _1}} \right)} \right)}^q{\left( {{\int _{{{\mathbb{H}}^n}\backslash {B_r}\left( {{\xi _0}} \right)}}\frac{1}{{\left\| {{\eta ^{ - 1}} \circ {\xi _0}} \right\|_{{{\mathbb{H}}^n}}^{\left( {Q + \alpha } \right)\frac{m}{{m - q}}}}}d\eta } \right)^{\frac{{m - q}}{m}}} \\
   &  = c\left( {Q,q,m,\alpha } \right){r^{ - \frac{{\alpha m + Qq}}{m}}}\left\| u \right\|_{{L^m}\left( {{B_R}\left( {{\xi _1}} \right)} \right)}^q.
\end{align*}
Thus the proof is complete.
\end{proof}

%Le28来自于\cite{BL17}引理2.3的推广
\begin{lemma}\label{Le28}
Let $1 \le q< \infty$ and $0<\alpha< 1 $. If $u \in L^q\left( {{{\mathbb{H}}^n}} \right)$ satisfies
\[\mathop {\sup }\limits_{0 < \left| h \right|} {\left\| {\frac{{\Delta _{Z,h}^2 u}}{{{{\left| h \right|}^\alpha }}}} \right\|_{{L^q}\left( {{{\mathbb{H}}^n}} \right)}^q} < \infty, \]
then it holds
\begin{equation}\label{eq461}
\mathop {\sup }\limits_{0 < \left| h \right|} \left\| {\frac{{{\Delta _{Z,h}}u}}{{{{\left| h \right|}^\alpha }}}} \right\|_{{L^q}\left( {{{\mathbb{H}}^n}} \right)} \le \frac{c}{{1 - \alpha }}\left[ {\mathop {\sup }\limits_{0 < \left| h \right|} \left\| {\frac{{\Delta _{Z,h}^2u}}{{{{\left| h \right|}^\alpha }}}} \right\|_{{L^q}\left( {{{\mathbb{H}}^n}} \right)} + \left\| u \right\|_{{L^q}\left( {{{\mathbb{H}}^n}} \right)}} \right],
\end{equation}
for some constant $c>0$. For any $h_0 >0$, we also have
\begin{equation}\label{eq462}
\mathop {\sup }\limits_{0 < \left| h \right| < {h_0}} \left\| {\frac{{{\Delta _{Z,h}}u}}{{{{\left| h \right|}^\alpha }}}} \right\|_{{L^q}\left( {{{\mathbb{H}}^n}} \right)} \le \frac{c}{{1 - \alpha }}\left[ {\mathop {\sup }\limits_{0 < \left| h \right| < {h_0}} \left\| {\frac{{\Delta _{Z,h}^2u}}{{{{\left| h \right|}^\alpha }}}} \right\|_{{L^q}\left( {{{\mathbb{H}}^n}} \right)} + \left( {h_0^{ - \alpha } + 1} \right)\left\| u \right\|_{{L^q}\left( {{{\mathbb{H}}^n}} \right)}} \right].
\end{equation}
\end{lemma}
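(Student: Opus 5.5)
The proof follows the Euclidean argument of Brasco--Lindgren, written multiplicatively along the one-parameter group $h\mapsto e^{hZ}$. The key identity is
\[
\Delta^2_{Z,h}u(x)=u(xe^{2hZ})-2u(xe^{hZ})+u(x)=\Delta_{Z,2h}u(x)-2\Delta_{Z,h}u(x),
\]
which holds because $e^{hZ}e^{hZ}=e^{2hZ}$: the bracket term in \eqref{eq23} vanishes since $[Z,Z]=0$. Rearranging gives
\[
\Delta_{Z,h}u=\tfrac12\Delta_{Z,2h}u-\tfrac12\Delta^2_{Z,h}u .
\]
Right translation $x\mapsto xe^{hZ}$ preserves Haar measure, so it is an isometry of $L^q(\mathbb{H}^n)$. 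Two consequences follow: $\|\Delta_{Z,h}u\|_{L^q}\le 2\|u\|_{L^q}$ for every $h$, and all norms of translates can be computed freely.

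First step: iterate the identity. Dividing by $|h|^\alpha$ yields
\[
\frac{\|\Delta_{Z,h}u\|_{L^q}}{|h|^\alpha}\le 2^{\alpha-1}\frac{\|\Delta_{Z,2h}u\|_{L^q}}{|2h|^\alpha}+\frac12\,\frac{\|\Delta^2_{Z,h}u\|_{L^q}}{|h|^\alpha}.
\]
Set $M=\sup_{0<|h|}\|\Delta^2_{Z,h}u\|_{L^q}/|h|^\alpha$, which is finite by hypothesis. Apply the inequality with $h$ replaced by $2h,4h,\dots,2^{k-1}h$ and chain the results. Since $2^{\alpha-1}<1$, this gives
\[
\frac{\|\Delta_{Z,h}u\|_{L^q}}{|h|^\alpha}\le 2^{k(\alpha-1)}\frac{\|\Delta_{Z,2^kh}u\|_{L^q}}{|2^kh|^\alpha}+\frac{M}{2}\sum_{j=0}^{k-1}2^{j(\alpha-1)}.
\]
The geometric sum is at most $\frac{1}{1-2^{\alpha-1}}\le \frac{c}{1-\alpha}$. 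For the tail term I use $\|\Delta_{Z,2^kh}u\|_{L^q}\le 2\|u\|_{L^q}$.

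Second step: stop the iteration at a good scale. For \eqref{eq461}, when $|h|\ge1$ the trivial bound $\|\Delta_{Z,h}u\|/|h|^\alpha\le 2\|u\|$ is already enough. When $|h|<1$, choose $k$ with $2^k|h|\in[1,2)$. The tail is then bounded by $2^{k(\alpha-1)}\cdot 2\|u\|_{L^q}\le 2\|u\|_{L^q}$, and \eqref{eq461} follows.

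For \eqref{eq462} the supremum is only over $|h|<h_0$, so the iteration must stay below $h_0$. Given $|h|<h_0$, take the largest $k\ge0$ with $2^{k-1}|h|<h_0$. Then every step $2^jh$ with $j\le k-1$ has $|2^jh|<h_0$, so only the restricted supremum $M_{h_0}$ enters. The endpoint satisfies $2^k|h|\ge h_0$, so the tail is at most $2\,h_0^{-\alpha}\|u\|_{L^q}$. If $k=0$, the direct bound $|h|\ge h_0/2$ gives $\|\Delta_{Z,h}u\|/|h|^\alpha\le 2^{1+\alpha}h_0^{-\alpha}\|u\|$. Combining the cases yields \eqref{eq462}, with the harmless extra "$+1$" inside the factor $(h_0^{-\alpha}+1)$.

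The only point that needs care is the group structure. One must check that the second-order Nirenberg difference decomposes exactly as in the Euclidean case, which is where $[Z,Z]=0$ and the Baker--Campbell--Hausdorff formula \eqref{eq23} enter. One must also check that the translation invariance of the Haar measure is used with right translations, which are measure preserving on $\mathbb{H}^n$ since it is unimodular. Beyond that, the argument is a routine dyadic summation, and the constant blows up like $(1-\alpha)^{-1}$ because $\sum_j 2^{j(\alpha-1)}$ does.
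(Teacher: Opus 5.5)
Your proof is correct. It rests on the same identity as the paper, $\Delta_{Z,h}u=\frac12(\Delta_{Z,2h}u-\Delta^2_{Z,h}u)$, together with invariance of Haar measure under right translations, but it uses that identity differently.

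\textbf{How the paper proceeds.} It applies the identity once and splits the $\Delta_{Z,2h}$ term according to whether $|2h|<\frac12$. It then absorbs $2^{\alpha-1}\sup_{0<|h|<1/2}\|\Delta_{Z,h}u/|h|^\alpha\|_{L^q}$ into the left-hand side. Finally, \eqref{eq462} is read off from \eqref{eq461} by bounding $\|\Delta^2_{Z,h}u\|_{L^q}\le 4\|u\|_{L^q}$ for $|h|\ge h_0$.

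\textbf{How you proceed.} You telescope the identity finitely many times along $h,2h,\dots,2^{k-1}h$ and stop at a scale comparable to $1$ (respectively $h_0$).

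\textbf{What each buys.} The paper's route is shorter. However, its absorption tacitly requires the absorbed supremum to be finite, and the hypotheses do not give this: a priori one only has $\|\Delta_{Z,h}u\|_{L^q}/|h|^\alpha\le 2|h|^{-\alpha}\|u\|_{L^q}$, which is unbounded as $h\to0$. To make it rigorous one must first restrict to $\varepsilon<|h|<\frac12$ and then let $\varepsilon\to0$. Your finite iteration never absorbs anything, so this issue does not arise. For \eqref{eq462} your argument uses only the restricted supremum $M_{h_0}$ and gives the slightly sharper bound $\frac{1}{1-\alpha}M_{h_0}+2h_0^{-\alpha}\|u\|_{L^q}$, using $1-2^{\alpha-1}\ge\frac{1-\alpha}{2}$, with no extra $+1$.

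\textbf{Two cosmetic remarks.}
First, $\Delta^2_{Z,h}u=\Delta_{Z,2h}u-2\Delta_{Z,h}u$ follows directly from the definition \eqref{eq25}, without \eqref{eq23}. The group law $e^{hZ}e^{hZ}=e^{2hZ}$ matters only if one defines $\Delta^2_{Z,h}$ as $\Delta_{Z,h}\circ\Delta_{Z,h}$.
Second, since $|h|<h_0$, your maximal $k$ always satisfies $k\ge1$, so the case $k=0$ is vacuous.
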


\begin{proof}
Using
\[{\Delta _{Z,h}}u\left( \xi  \right) = \frac{1}{2}\left( {{\Delta _{Z,2h}}u\left( \xi  \right) - \Delta _{Z,h}^2u\left( \xi  \right)} \right),\]
we obtain
\begin{equation}\label{eq460}
{\left\| {\frac{{{\Delta _{Z,h}}u}}{{{{\left| h \right|}^\alpha }}}} \right\|_{{L^q}\left( {{{\mathbb{H}}^n}} \right)}} \le \frac{1}{2}{\left\| {\frac{{{\Delta _{Z,2h}}u}}{{{{\left| h \right|}^\alpha }}}} \right\|_{{L^q}\left( {{{\mathbb{H}}^n}} \right)}} + \frac{1}{2}{\left\| {\frac{{\Delta _{Z,h}^2u}}{{{{\left| h \right|}^\alpha }}}} \right\|_{{L^q}\left( {{{\mathbb{H}}^n}} \right)}}
\end{equation}
for any $h \in \mathbb{R} \backslash \{ 0\} $. For the first term on the right-hand side of \eqref{eq460}, we let $h'=2h$ and have
\begin{align*}
   &{\left\| {\frac{{{\Delta _{Z,2h}}u}}{{{{\left| h \right|}^\alpha }}}} \right\|_{{L^q}\left( {{{\mathbb{H}}^n}} \right)}} = {2^\alpha }{\left\| {\frac{{{\Delta _{Z,h'}}u}}{{{{\left| {h'} \right|}^\alpha }}}} \right\|_{{L^q}\left( {{{\mathbb{H}}^n}} \right)}} \\
   &  \le {2^\alpha }\mathop {\sup }\limits_{0 < \left| {h'} \right| < \frac{1}{2}} {\left\| {\frac{{{\Delta _{Z,h'}}u}}{{{{\left| {h'} \right|}^\alpha }}}} \right\|_{{L^q}\left( {{{\mathbb{H}}^n}} \right)}} + {2^\alpha }\mathop {\sup }\limits_{\left| {h'} \right| \ge \frac{1}{2}} {\left\| {\frac{{{\Delta _{Z,h'}}u}}{{{{\left| {h'} \right|}^\alpha }}}} \right\|_{{L^q}\left( {{{\mathbb{H}}^n}} \right)}}\\
   & \le {2^\alpha }\mathop {\sup }\limits_{0 < \left| {h'} \right| < \frac{1}{2}} {\left\| {\frac{{{\Delta _{Z,h'}}u}}{{{{\left| {h'} \right|}^\alpha }}}} \right\|_{{L^q}\left( {{{\mathbb{H}}^n}} \right)}} + {2^\alpha } \cdot 2\cdot{2^\alpha }{\left\| u \right\|_{{L^q}\left( {{{\mathbb{H}}^n}} \right)}}.
\end{align*}
By putting this estimate in \eqref{eq460} yields
\[\mathop {\sup }\limits_{0 < \left| h \right| < \frac{1}{2}} {\left\| {\frac{{{\Delta _{Z,h}}u}}{{{{\left| h \right|}^\alpha }}}} \right\|_{{L^q}\left( {{{\mathbb{H}}^n}} \right)}} \le \frac{1}{2}\mathop {\sup }\limits_{0 < \left| h \right| < \frac{1}{2}} {\left\| {\frac{{\Delta _{Z,h}^2u}}{{{{\left| h \right|}^\alpha }}}} \right\|_{{L^q}\left( {{{\mathbb{H}}^n}} \right)}} + {4^\alpha }{\left\| u \right\|_{{L^q}\left( {{{\mathbb{H}}^n}} \right)}} + {2^{\alpha  - 1}}\mathop {\sup }\limits_{0 < \left| {h'} \right| < \frac{1}{2}} {\left\| {\frac{{{\Delta _{Z,h'}}u}}{{{{\left| {h'} \right|}^\alpha }}}} \right\|_{{L^q}\left( {{{\mathbb{H}}^n}} \right)}}.\]
Recalling $\alpha<1$, the last term can be absorbed in the left-hand side and thus we get \eqref{eq461} with simple manipulations.

Finally, for every $h_0 >0$, it deduces
\[\mathop {\sup }\limits_{0 < \left| h \right|} {\left\| {\frac{{\Delta _{Z,h}^2u}}{{{{\left| h \right|}^\alpha }}}} \right\|_{{L^q}\left( {{{\mathbb{H}}^n}} \right)}} \le \mathop {\sup }\limits_{0 < \left| h \right| < {h_0}} {\left\| {\frac{{\Delta _{Z,h}^2u}}{{{{\left| h \right|}^\alpha }}}} \right\|_{{L^q}\left( {{{\mathbb{H}}^n}} \right)}} + 4h_0^{ - \alpha }{\left\| u \right\|_{{L^q}\left( {{{\mathbb{H}}^n}} \right)}},\]
so \eqref{eq462} can be obtained by combining \eqref{eq461} and the above inequality.
\end{proof}

%Le27来自于\cite{BL17}命题2.7的推广
\begin{lemma}\label{Le27}
Let $1 \le q< \infty$ and $0<\alpha< \beta \le1 $. If for some $h_0>0$, $u \in L^q\left( {{{\mathbb{H}}^n}} \right)$ satisfies
\[\mathop {\sup }\limits_{0 < \left| h \right| < {h_0}} {\left\| {\frac{{\Delta _{Z,h}u}}{{{{\left| h \right|}^\beta }}}} \right\|_{{L^q}\left( {{{\mathbb{H}}^n}} \right)}^q} < \infty, \]
then
\[\left[ u \right]_{H{W^{\alpha ,q}}\left( {{{\mathbb{H}}^n}} \right)}^q \le c\left( {\frac{{h_0^{\left( {\beta  - \alpha } \right)q}}}{{\beta  - \alpha }}\mathop {\sup }\limits_{0 < \left| h \right| < {h_0}} \left\| {\frac{{{\Delta _{Z,h}}u}}{{{{\left| h \right|}^\beta }}}} \right\|_{{L^q}\left( {{{\mathbb{H}}^n}} \right)}^q + \frac{{h_0^{ - \alpha q}}}{\alpha }\left\| u \right\|_{{L^q}\left( {{{\mathbb{H}}^n}} \right)}^q} \right),\]
where $c={c}(Q,q)>0$.
\end{lemma}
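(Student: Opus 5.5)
The plan is to follow the Euclidean argument of Brasco--Lindgren (Proposition 2.7 in their work): write the Gagliardo seminorm in group coordinates and reduce it to an integral over increments $h$ of $L^q$ norms of differences. Since the kernel is left invariant, the substitution $\eta=\xi\circ z$ gives
\[
\left[ u \right]_{HW^{\alpha,q}(\mathbb{H}^n)}^q=\int_{\mathbb{H}^n}\int_{\mathbb{H}^n}\frac{|u(\xi\circ z)-u(\xi)|^q}{\|z\|_{\mathbb{H}^n}^{Q+\alpha q}}\,d\xi\,dz .
\]
This integral splits into the regions $\|z\|_{\mathbb{H}^n}\ge h_0$ and $\|z\|_{\mathbb{H}^n}<h_0$.

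\emph{Far region.} Here I use only $|u(\xi\circ z)-u(\xi)|^q\le 2^{q-1}(|u(\xi\circ z)|^q+|u(\xi)|^q)$ and the right invariance of Haar measure. This gives the bound $2^q\|u\|_{L^q}^q\int_{\|z\|\ge h_0}\|z\|^{-Q-\alpha q}dz$. In polar coordinates for the homogeneous norm, the last integral equals $c(Q)\,h_0^{-\alpha q}/(\alpha q)$, which yields the second term of the claimed estimate.

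\emph{Near region.} This is the main obstacle. The hypothesis only controls differences along the single direction $Z$ (through $e^{hZ}$), while the integral involves increments $z$ in all directions. I would interpret the statement as holding for every left invariant direction: the assumed bound holds uniformly for $Z$ running over the horizontal fields $X_1,\dots,X_{2n}$, and the vertical increments are generated by commutators. Concretely, I would write every $z$ with $\|z\|<h_0$ as a product of a bounded number $N=N(n)$ of horizontal exponentials, $z=e^{h_1Z_1}\cdots e^{h_NZ_N}$ with $|h_j|\le c\|z\|$. This is possible by the Baker--Campbell--Hausdorff formula \eqref{eq23}: the vertical component is realized as $e^{hX_i}e^{hX_{n+i}}e^{-hX_i}e^{-hX_{n+i}}$ with $|h|\sim |t|^{1/2}\lesssim \|z\|$. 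Telescoping then gives
\[
|u(\xi\circ z)-u(\xi)|\le\sum_{j}|\Delta_{Z_j,h_j}u(\xi\circ e^{h_1Z_1}\cdots e^{h_{j-1}Z_{j-1}})|.
\]
By translation invariance of the measure, each term satisfies $\|\cdot\|_{L^q}^q\le |h_j|^{\beta q}S\le c\|z\|^{\beta q}S$, where $S$ denotes the supremum in the hypothesis. Because $|h_j|\le c\|z\|$ may exceed $h_0$, I first shrink: either I replace $h_0$ by $h_0/c$ at the cost of constants, or I split $\Delta_{Z,h}=\sum\Delta_{Z,h/m}(\cdot\, e^{kh Z/m})$ with fixed $m$.

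Integrating the near-region bound gives
\[
\int_{\|z\|<h_0}\frac{c\,S\,\|z\|^{\beta q}}{\|z\|^{Q+\alpha q}}dz=c\,S\,\frac{h_0^{(\beta-\alpha)q}}{(\beta-\alpha)q}.
\]
This is the first term of the claim, and summing the two regions completes the proof with $c=c(Q,q)$. The delicate step is the decomposition of arbitrary increments into horizontal Nirenberg differences with controlled lengths, which requires the hypothesis in all horizontal directions. If only the single direction $Z$ is intended, the estimate should be read as the one-directional seminorm $\int\!\int_{\mathbb{R}}|\Delta_{Z,h}u|^q|h|^{-1-\alpha q}\,dh\,d\xi$. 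In that case the same two-region computation in the one-dimensional variable $h$ gives the result directly.
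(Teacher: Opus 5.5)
Your two-region split and both integral evaluations coincide with the paper's proof, which is a direct transcription of the Brasco--Lindgren argument. The real difference is in the near region. The paper writes the seminorm as $\int_{\{|h|<h_0\}}\int_{\mathbb{H}^n}|\Delta_{Z,h}u(\xi)|^q|h|^{-Q-\alpha q}\,d\xi\,dh$ plus the far part. The kernel $|h|^{-Q-\alpha q}$ and the resulting factor $h_0^{(\beta-\alpha)q}/(\beta-\alpha)$ only make sense if $h$ ranges over a $Q$-dimensional set of group increments. Yet the paper applies a hypothesis that concerns the single one-parameter family $e^{hZ}$. In effect it reads the hypothesis, without comment, as a bound on $u(\xi\circ h)-u(\xi)$ for all $\|h\|_{\mathbb{H}^n}<h_0$.

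Your suspicion is justified: for one fixed $Z$ the statement as written is false. Take $Z=X_1$ and $u(\xi)=\phi(\xi)g(x_2)$, with $\phi\in C_0^\infty(\mathbb{H}^n)$, $\phi\equiv1$ near the origin, and $g(r)=|r|^{-\gamma}$ where $\max\{0,\frac1q-\alpha\}<\gamma<\frac1q$. Since $\xi\circ e^{hX_1}=(x+he_1,\,t-\frac12 h x_{n+1})$ does not move $x_2$, you get $\|\Delta_{X_1,h}u\|_{L^q}\le c|h|$, so the hypothesis holds with $\beta=1$. On the other hand, restrict the double integral to a neighbourhood of the origin and integrate out all variables except $x_2,y_2$. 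This bounds $[u]^q_{HW^{\alpha,q}(\mathbb{H}^n)}$ from below by a multiple of the Euclidean $W^{\alpha,q}$ seminorm of $g$ near $0$, which is infinite.

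Your repair is correct and supplies exactly the missing implication, assuming the bound for $X_1,\dots,X_{2n}$. The ingredients are:
\begin{itemize}
\item the factorization $z=e^{h_1Z_1}\cdots e^{h_NZ_N}$ with $N=2n+4$ and $|h_j|\le c\|z\|_{\mathbb{H}^n}$, the vertical part coming from the central commutator $e^{aX_i}e^{bX_{n+i}}e^{-aX_i}e^{-bX_{n+i}}=(0,ab)$;
\item telescoping together with right invariance of the Haar measure;
\item the splitting $\Delta_{Z,h}u(\xi)=\sum_{k=0}^{m-1}(\Delta_{Z,h/m}u)(\xi e^{khZ/m})$, which keeps every step below $h_0$.
\end{itemize}
Together these give $\|u(\cdot\circ z)-u\|_{L^q}^q\le c(n,q)\|z\|_{\mathbb{H}^n}^{\beta q}S$, after which the paper's computation goes through with $c=c(Q,q)$.

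Compared with the paper, you pay for a Baker--Campbell--Hausdorff decomposition. In return you need the hypothesis only in horizontal directions, which is precisely what is available where the lemma is used. In Proposition~\ref{Pro41}, the hypothesis of Lemma~\ref{Le26} is verified through Lemma~\ref{Le22} from $\|\nabla_H u\|_{L^q}$, and this controls all horizontal directions at once; Lemma~\ref{Le26} should then be read in the same multi-directional way. Your fallback one-directional seminorm would not serve there, because the estimate of $J_{11}$ needs the full Gagliardo seminorm on a ball.
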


\begin{proof}
It follows by a direct computation that
\begin{align*}
   \left[ u \right]_{H{W^{\alpha ,q}}\left( {{{\mathbb{H}}^n}} \right)}^q& = \int_{\left\{ {\left| h \right| < {h_0}} \right\}} {\int_{{{\mathbb{H}}^n}} {\frac{{{{\left| {{\Delta _{Z,h}}u\left( \xi  \right)} \right|}^q}}}{{{{\left| h \right|}^{Q + \alpha q}}}}dhd\xi } }  + \int_{\left\{ {\left| h \right| \ge {h_0}} \right\}} {\int_{{{\mathbb{H}}^n}} {\frac{{{{\left| {{\Delta _{Z,h}}u\left( \xi  \right)} \right|}^q}}}{{{{\left| h \right|}^{Q + \alpha q}}}}dhd\xi } }  \\
   & \le \int_{\left\{ {\left| h \right| < {h_0}} \right\}} {\left( {\int_{{{\mathbb{H}}^n}} {\frac{{{{\left| {{\Delta _{Z,h}}u\left( \xi  \right)} \right|}^q}}}{{{{\left| h \right|}^{\beta q}}}}d\xi } } \right)\frac{{dh}}{{{{\left| h \right|}^{Q - \left( {\beta  - \alpha } \right)q}}}}}  + {2^q}\left\| u \right\|_{{L^q}\left( {{{\mathbb{H}}^n}} \right)}^q\int_{\left\{ {\left| h \right| \ge {h_0}} \right\}} {\frac{1}{{{{\left| h \right|}^{Q + \alpha q}}}}dh} \\
   & \le c\frac{{h_0^{\left( {\beta  - \alpha } \right)q}}}{{\beta  - \alpha }}\mathop {\sup }\limits_{0 < \left| h \right| < {h_0}} \left\| {\frac{{{\Delta _{Z,h}}u}}{{{{\left| h \right|}^\beta }}}} \right\|_{{L^q}\left( {{{\mathbb{H}}^n}} \right)}^q + \frac{{ch_0^{ - \alpha q}}}{\alpha }\left\| u \right\|_{{L^q}\left( {{{\mathbb{H}}^n}} \right)}^q.
\end{align*}
\end{proof}

%Le26来自于\cite{BLS18}引理2.6的推广
\begin{lemma}[Fractional Sobolev-type inequality]\label{Le26}
Let $1 \le q< \infty$ and $0<\alpha< \beta<1$. If for some $h_0>0$ and some Kor\'{a}nyi ball ${B_r} \subset {{\mathbb{H}}^n}$ with $r>h_0$, $u \in L_{loc}^q\left( {{{\mathbb{H}}^n}} \right)$ satisfies
\[\mathop {\sup }\limits_{0 < \left| h \right| < {h_0}} {\left\| {\frac{{\Delta _{Z,h}^2u}}{{{{\left| h \right|}^\beta }}}} \right\|_{{L^q}\left( {{B_r}} \right)}} < \infty, \]
then for every $\rho  > 0$ such that $\rho +h_0\le r$ we have
\begin{equation}\label{eq493}
\left[ u \right]_{H{W^{\alpha ,q}}\left( {{B_\rho }} \right)}^q \le {c_1}\left( {\mathop {\sup }\limits_{0 < \left| h \right| < {h_0}} \left\| {\frac{{\Delta _{Z,h}^2u}}{{{{\left| h \right|}^\beta }}}} \right\|_{{L^q}\left( {{B_r}} \right)}^q + \left\| u \right\|_{{L^q}\left( {{B_r}} \right)}^q} \right),
\end{equation}
and
\begin{equation}\label{eq494}
\mathop {\sup }\limits_{0 < \left| h \right| < {h_0}} \left\| {\frac{{{\Delta _{Z,h}}u}}{{{{\left| h \right|}^\beta }}}} \right\|_{{L^q}\left( {{B_\rho }} \right)}^q \le {c_2}\left( {\mathop {\sup }\limits_{0 < \left| h \right| < {h_0}} \left\| {\frac{{\Delta _{Z,h}^2u}}{{{{\left| h \right|}^\beta }}}} \right\|_{{L^q}\left( {{B_r}} \right)}^q + \left\| u \right\|_{{L^q}\left( {{B_r}} \right)}^q} \right),
\end{equation}
where ${{B_\rho }}$ is concentric with ${{B_r }}$. Here $c_1={c_1}(Q,q,\alpha,\beta,h_0)>0$ and $c_2={c_2}(Q,q,\beta,h_0)>0$ are constants that blow up as $\beta \to 1$, $\alpha \to \beta$ or $h_0 \to 0$.
\end{lemma}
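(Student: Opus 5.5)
The plan is to localize with a cutoff and then apply Lemma \ref{Le28} and Lemma \ref{Le27} on the whole group. This is the Heisenberg analogue of \cite{BLS18}, Lemma 2.6.

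\textbf{Step 1: localization.} Take a Lipschitz cutoff $\psi$ with $\psi\equiv1$ on $B_\rho$, $\operatorname{supp}\psi\subset B_{\rho+h_0/2}$, and $|\nabla_H\psi|\le c/h_0$. Set $w=\psi u\in L^q(\mathbb{H}^n)$, supported in $B_r$. The Leibniz rule for Nirenberg differences, with $\xi_h=\xi e^{hZ}$, gives
\[
\Delta^2_{Z,h}w(\xi)=\psi(\xi_{2h})\Delta^2_{Z,h}u(\xi)+2\Delta_{Z,h}u(\xi)\,\Delta_{Z,h}\psi(\xi_h)+u(\xi)\,\Delta^2_{Z,h}\psi(\xi).
\]
For $|h|$ small, every point $\xi e^{jhZ}$ with $\xi$ in the support stays in $B_r$. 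This uses that $\|e^{hZ}\|_{\mathbb{H}^n}\sim|h|$ when $Z$ is horizontal, together with the margin $\rho+h_0\le r$; one may need to shrink $h_0$ by a fixed factor. The last two terms are controlled as follows:
\[
|\Delta_{Z,h}\psi|\le c|h|/h_0,\qquad |\Delta_{Z,h}^2\psi|\le c\min\{1,|h|/h_0\},
\]
and $\|\Delta_{Z,h}u\|_{L^q(B_r)}\le 2\|u\|_{L^q(B_r)}$. Since $\beta<1$, this yields
\[
\sup_{0<|h|<h_0}\Big\|\tfrac{\Delta^2_{Z,h}w}{|h|^\beta}\Big\|_{L^q(\mathbb{H}^n)}\le \sup_{0<|h|<h_0}\Big\|\tfrac{\Delta^2_{Z,h}u}{|h|^\beta}\Big\|_{L^q(B_r)}+c(h_0)\|u\|_{L^q(B_r)}.
\]
The cross term is not a problem here, because we only need order $|h|^\beta$ with $\beta<1$, and the factor $|h|/h_0$ absorbs it. If the exact Leibniz bookkeeping turns out messy, a cruder route also works: bound $|\Delta_{Z,h}u|$ by $|u|$ at shifted points, which gives the term $|h|^{1-\beta}\|u\|$.

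\textbf{Step 2: first differences.} Apply \eqref{eq462} of Lemma \ref{Le28} to $w$ with exponent $\beta$. This gives
\[
\sup_{0<|h|<h_0}\big\||h|^{-\beta}\Delta_{Z,h}w\big\|_{L^q(\mathbb{H}^n)}\le \frac{c}{1-\beta}\big[\text{RHS of Step 1}+(h_0^{-\beta}+1)\|u\|_{L^q(B_r)}\big].
\]
Restricting to $B_\rho$ and shrinking $h$ so that $\xi e^{hZ}$ stays where $\psi\equiv1$ (otherwise work with a slightly larger ball in the definition of $\psi$), we have $\Delta_{Z,h}w=\Delta_{Z,h}u$ on $B_\rho$. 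This proves \eqref{eq494}, with $c_2$ blowing up like $(1-\beta)^{-1}$ and as $h_0\to0$.

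\textbf{Step 3: fractional seminorm.} Apply Lemma \ref{Le27} to $w$ with $\alpha<\beta$, using the bound from Step 2. Then $[u]_{HW^{\alpha,q}(B_\rho)}\le[w]_{HW^{\alpha,q}(\mathbb{H}^n)}$, since $w=u$ on $B_\rho$. This gives \eqref{eq493}, with $c_1$ blowing up like $(\beta-\alpha)^{-1}$ as well. Here Lemma \ref{Le27} is understood as in the paper, with the difference quotients taken over horizontal directions that generate the full increment.

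\textbf{Main obstacle.} The delicate point is the support/shift bookkeeping in Step 1. Right translations $\xi\mapsto\xi e^{hZ}$ are not isometries of the left-invariant Korányi distance, since $\|(\xi e^{hZ})^{-1}\xi_0\|$ can differ from $\|\xi^{-1}\xi_0\|$ by more than $|h|$ because of the commutator term. One must check that $\xi e^{jhZ}\in B_r$ whenever $\xi\in B_{\rho+h_0/2}$ and $|h|<h_0$, $j\le2$. I would handle this via the quasi-triangle inequality, choosing the cutoff margin as a fixed multiple of $h_0$; the resulting loss is only in constants depending on $h_0$.
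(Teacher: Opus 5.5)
Your proposal is correct and follows essentially the same route as the paper: localize to $u\psi$, expand $\Delta^2_{Z,h}(u\psi)$ by the Leibniz rule, pass to first differences with Lemma \ref{Le28}, and conclude with Lemma \ref{Le27} and $[u]_{HW^{\alpha,q}(B_\rho)}\le[u\psi]_{HW^{\alpha,q}(\mathbb{H}^n)}$. For \eqref{eq494} the paper writes $\psi\,\Delta_{Z,h}u=\Delta_{Z,h}(u\psi)-u(\cdot\,e^{hZ})\,\Delta_{Z,h}\psi$, which avoids needing $\psi\equiv1$ at shifted points. That is cleaner than your ``shrink $h$'' option, which fails near $\partial B_\rho$ unless $\psi\equiv1$ on a larger ball.

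The shift bookkeeping you call the main obstacle is harmless. Since $d$ is a genuine left-invariant metric, $d(\xi_0,\xi e^{hZ})\le d(\xi_0,\xi)+d(0,e^{hZ})$ with $d(0,e^{hZ})=|h|$ for unit horizontal $Z$, while the range $|h|\gtrsim h_0$ is controlled trivially by $\|u\|_{L^q(B_r)}$. A quasi-triangle argument would actually be worse, since its constant multiplies $\rho$ and cannot be absorbed into an $h_0$-margin.
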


\begin{proof}
We take a cut-off function $\psi  \in C_0^\infty \left( {{B_{\rho  + \frac{{{h_0}}}{3}}}} \right)$ satisfy
\[0 \le \psi  \le 1,\;\psi  = 1\;{\rm{in}}\;{B_\rho },\;\left| {{\nabla _H}\psi } \right| \le \frac{c}{{{h_0}}}\;{\rm{and}}\;\left| {{X_i}{X_j}\psi } \right| \le \frac{c}{{h_0^2}}.\]
Note that
\[\left| {{\Delta _{Z,h}}\psi } \right| \le \frac{c}{{{h_0}}}\left| h \right|\;{\rm{and}}\;\left| {\Delta _{Z,h}^2\psi } \right| \le \frac{c}{{h_0^2}}{\left| h \right|^2}\]
and
\begin{equation}\label{eq463}
\left[ u \right]_{H{W^{\alpha ,q}}\left( {{B_\rho }} \right)}^q \le \left[ {u\psi } \right]_{H{W^{\alpha ,q}}\left( {{{\mathbb{H}}^n}} \right)}^q.
\end{equation}
By applying Lemma \ref{Le27}, we get
\[\left[ u \psi\right]_{H{W^{\alpha ,q}}\left( {{{\mathbb{H}}^n}} \right)}^q \le c\left( {\frac{{h_0^{\left( {\beta  - \alpha } \right)q}}}{{\beta  - \alpha }}\mathop {\sup }\limits_{0 < \left| h \right| < {h_0}} \left\| {\frac{{{\Delta _{Z,h}}(u\psi)}}{{{{\left| h \right|}^\beta }}}} \right\|_{{L^q}\left( {{{\mathbb{H}}^n}} \right)}^q + \frac{{h_0^{ - \alpha q}}}{\alpha }\left\| u \psi\right\|_{{L^q}\left( {{{\mathbb{H}}^n}} \right)}^q} \right),\]
where $c=c(Q,q)>0$. We now use Lemma \ref{Le28} to obtain
\begin{equation}\label{eq464}
\left[ {u\psi } \right]_{H{W^{\alpha ,q}}\left( {{{\mathbb{H}}^n}} \right)}^q \le c\left( {\mathop {\sup }\limits_{0 < \left| h \right| < {h_0}} \left\| {\frac{{\Delta _{Z,h}^2\left( {u\psi } \right)}}{{{{\left| h \right|}^\beta }}}} \right\|_{{L^q}\left( {{{\mathbb{H}}^n}} \right)}^q + \left\| {u\psi } \right\|_{{L^q}\left( {{{\mathbb{H}}^n}} \right)}^q} \right),
\end{equation}
where $c=c(h_0, Q,q,\alpha,\beta)$. Consider
\[\Delta _{Z,h}^2\left( {u\psi } \right) = \psi \left( {\xi  {e^{2hZ}}} \right)\Delta _{Z,h}^2u + 2{\Delta _{Z,h}}u{\Delta _{Z,h}}\psi \left( {\xi   {e^{hZ}}} \right) + u\Delta _{Z,h}^2\psi ,\]
so for ${0 < \left| h \right| < {h_0}}$, we have
\begin{align}\label{eq465}
   &\left\| {\frac{{\Delta _{Z,h}^2\left( {u\psi } \right)}}{{{{\left| h \right|}^\beta }}}} \right\|_{{L^q}\left( {{{\mathbb{H}}^n}} \right)}^q\nonumber \\
   \le& c\left( {\left\| {\frac{{\psi \left( {\xi  {e^{2hZ}}} \right)\Delta _{Z,h}^2u}}{{{{\left| h \right|}^\beta }}}} \right\|_{{L^q}\left( {{{\mathbb{H}}^n}} \right)}^q + \left\| {\frac{{{\Delta _{Z,h}}u{\Delta _{Z,h}}\psi \left( {\xi   {e^{hZ}}} \right)}}{{{{\left| h \right|}^\beta }}}} \right\|_{{L^q}\left( {{{\mathbb{H}}^n}} \right)}^q + \left\| {\frac{{u\Delta _{Z,h}^2\psi }}{{{{\left| h \right|}^\beta }}}} \right\|_{{L^q}\left( {{{\mathbb{H}}^n}} \right)}^q} \right)\nonumber \\
   \le & c\left( {\left\| {\frac{{\Delta _{Z,h}^2u}}{{{{\left| h \right|}^\beta }}}} \right\|_{{L^q}\left( {{B_r}} \right)}^q + h_0^{\left( {1 - \beta } \right)q}\left\| {{\Delta _{Z,h}}u} \right\|_{{L^q}\left( {{B_{\rho  + \frac{{2{h_0}}}{3}}}} \right)}^q + h_0^{\left( {2 - \beta } \right)q}\left\| u \right\|_{{L^q}\left( {{B_r}} \right)}^q} \right)\nonumber \\
   \le& c\left( {\left\| {\frac{{\Delta _{Z,h}^2u}}{{{{\left| h \right|}^\beta }}}} \right\|_{{L^q}\left( {{B_r}} \right)}^q+\left\| u \right\|_{{L^q}\left( {{B_r}} \right)}^q} \right),
\end{align}
where $c=c(h_0, Q,q,\alpha,\beta)$. By combining \eqref{eq464} with \eqref{eq465}, we yield
\[\left[ u \right]_{H{W^{\alpha ,q}}\left( {{B_\rho }} \right)}^q \le c\left( {\mathop {\sup }\limits_{0 < \left| h \right| < {h_0}} \left\| {\frac{{\Delta _{Z,h}^2u}}{{{{\left| h \right|}^\beta }}}} \right\|_{{L^q}\left( {{B_r}} \right)}^q+\left\| u \right\|_{{L^q}\left( {{B_r}} \right)}^q} \right).\]

Next we prove the second inequality \eqref{eq494}. Consider
\[{\Delta _{Z,h}}\left( {u\psi } \right) = {\Delta _{Z,h}}u \cdot \psi  + u\left( {\xi   {e^{hZ}}} \right){\Delta _{Z,h}}\psi ,\]
so for ${0 < \left| h \right| < {h_0}}$, by using the above formula, Lemma \ref{Le28} and \eqref{eq465} we get
\begin{align*}
   \left\| {\frac{{{\Delta _{Z,h}}u}}{{{{\left| h \right|}^\beta }}}} \right\|_{{L^q}\left( {{B_\rho }} \right)}^q& \le \left\| {\frac{{{\Delta _{Z,h}}u \cdot \psi }}{{{{\left| h \right|}^\beta }}}} \right\|_{{L^q}\left( {{{\mathbb{H}}^n}} \right)}^q \\
   &  \le c\left( {\left\| {\frac{{{\Delta _{Z,h}}\left( {u\psi } \right)}}{{{{\left| h \right|}^\beta }}}} \right\|_{{L^q}\left( {{{\mathbb{H}}^n}} \right)}^q + \left\| {\frac{{u\left( {\xi  {e^{hZ}}} \right){\Delta _{Z,h}}\psi }}{{{{\left| h \right|}^\beta }}}} \right\|_{{L^q}\left( {{{\mathbb{H}}^n}} \right)}^q} \right)\\
   & \le c\left( {\left\| {\frac{{\Delta _{Z,h}^2\left( {u\psi } \right)}}{{{{\left| h \right|}^\beta }}}} \right\|_{{L^q}\left( {{{\mathbb{H}}^n}} \right)}^q + \left\| u \right\|_{{L^q}\left( {\rho  + {h_0}} \right)}^q+ \left\| {u\left( { \bullet {e^{hZ}}} \right)} \right\|_{{L^q}\left( {\rho  + \frac{{2{h_0}}}{3}} \right)}^q } \right)\\
  & \le c\left( {\left\| {\frac{{\Delta _{Z,h}^2\left( {u\psi } \right)}}{{{{\left| h \right|}^\beta }}}} \right\|_{{L^q}\left( {{{\mathbb{H}}^n}} \right)}^q + \left\| u \right\|_{{L^q}\left( {\rho  + {h_0}} \right)}^q} \right)\\
  & \le c\left( {\left\| {\frac{{\Delta _{Z,h}^2u}}{{{{\left| h \right|}^\beta }}}} \right\|_{{L^q}\left( {{B_r}} \right)}^q + \left\| u \right\|_{{L^q}\left( {{B_r}} \right)}^q} \right).
\end{align*}
\end{proof}

\begin{lemma}[\cite{DA04}, Theorem 1.1]\label{Le213}
Let $u \in L^p(\Omega)$ and $Z$ be a left-invariant vector field. If for $\alpha>1$,
\[\mathop {\sup }\limits_{0 < \left| h \right| < {h_0}} \int_{\Omega} {{{\left| {\frac{{\Delta _{Z,h}^2u}}{{{{\left| h \right|}^\alpha }}}} \right|}^p}d\xi }  \le c,\]
then
\[\int_{\Omega} {{{\left| {{\nabla _H}u} \right|}^p}d\xi }  \le c.\]
\end{lemma}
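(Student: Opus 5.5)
The plan is to reduce the claim to the difference-quotient characterization in Lemma \ref{Le22}. Choosing $Z=X_i$ for $i=1,\dots,2n$ then gives every component of $\nabla_H u$. So it suffices to show that the first-order quotients $\Delta_{Z,h}u/|h|$ are bounded in $L^p$, uniformly in small $h$. The tool is the dyadic telescoping identity already used in the proof of Lemma \ref{Le28}. Since $\Delta_{Z,2h}u=2\Delta_{Z,h}u+\Delta_{Z,h}^2u$, we have
\[
\frac{\Delta_{Z,h}u}{h}=\frac{\Delta_{Z,2h}u}{2h}-\frac{\Delta^2_{Z,h}u}{2h}.
\]
Iterating $k$ times gives
\[
\frac{\Delta_{Z,h}u}{h}=\frac{\Delta_{Z,2^kh}u}{2^kh}-\sum_{j=0}^{k-1}\frac{\Delta^2_{Z,2^jh}u}{2^{j+1}h}.
\]

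For a fixed $0<|h|<h_0/2$, choose $k$ with $h_0/2\le 2^k|h|<h_0$, so that every step $2^jh$, $j\le k$, stays admissible in the hypothesis. Each term of the sum is estimated in $L^p$ by the assumption:
\[
\Big\|\frac{\Delta^2_{Z,2^jh}u}{2^{j+1}h}\Big\|_{L^p}\le \tfrac12\, c^{1/p}\,(2^j|h|)^{\alpha-1}.
\]
Because $\alpha>1$, this is a geometric series dominated by its largest term $2^{k}|h|<h_0$. The whole sum is therefore bounded by $C(\alpha)\,c^{1/p}h_0^{\alpha-1}$, uniformly in $h$. This is exactly where $\alpha>1$ is used: for $\alpha<1$ the sum would be dominated by its smallest scales and blow up as $h\to0$, which is the situation of Lemma \ref{Le28}. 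The leading term is trivial: since $2^k|h|\ge h_0/2$, we get $\|\Delta_{Z,2^kh}u/(2^kh)\|_{L^p}\le 4\|u\|_{L^p}/h_0$. Together these give
\[
\sup_{0<|h|<h_0/2}\Big\|\frac{\Delta_{Z,h}u}{h}\Big\|_{L^p}\le C\big(c^{1/p}h_0^{\alpha-1}+h_0^{-1}\|u\|_{L^p}\big),
\]
and Lemma \ref{Le22} then yields $Zu\in L^p$ with the corresponding bound. Summing over $Z=X_1,\dots,X_{2n}$ gives the claim, with the constant on the right depending also on $\|u\|_{L^p}$, $h_0$ and $\alpha$, as is implicit in the statement.

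The main obstacle is bookkeeping of domains. The right translates $\xi e^{2^jhZ}$ must stay in the region where the hypothesis holds. I would therefore apply the argument on a compact $K\subset\Omega$ with $K e^{tZ}\subset\Omega$ for $|t|\le 2h_0$, shrinking $h_0$ if needed. I would also use that right translation by $e^{tZ}$ preserves the Haar measure, so that the $L^p(K)$ norms of translated second differences are controlled by the hypothesis on the translated set. If the hypothesis is only known on $\Omega$ itself, the conclusion is read on such compact subsets, which is how Lemma \ref{Le22} is stated anyway.
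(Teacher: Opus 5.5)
Your argument is correct. The paper gives no proof of this lemma and simply imports it from \cite{DA04}, so the comparison is between a citation and your self-contained proof. Your proof uses only tools already in the paper: the dyadic identity $\Delta_{Z,2h}u=2\Delta_{Z,h}u+\Delta^2_{Z,h}u$ from the proof of Lemma \ref{Le28}, run toward larger scales so that, because $\alpha>1$, the second-difference terms form a geometric sum dominated by the largest scale, followed by Lemma \ref{Le22}.

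What your route buys is a correct formulation with an explicit constant, $\|Zu\|_{L^p(K)}\le C(\alpha)\big(c^{1/p}h_0^{\alpha-1}+h_0^{-1}\|u\|_{L^p(\Omega)}\big)$, for compact $K$ with $Ke^{tZ}\subset\Omega$ whenever $|t|<h_0$. This is not cosmetic. As printed, with the same $c$ on both sides, the lemma is false: for $u=x_1$ one has $\Delta^2_{X_i,h}u\equiv 0$ for every $i$, while $X_1u=1$. It also tacitly needs the hypothesis for each $Z=X_1,\dots,X_{2n}$. Both corrections are harmless where the lemma is used in the proof of Theorem \ref{Th13}. Passing from $B_{R_i-4h_0}$ to $B_{R_{i+1}+4h_0}=B_{R_i-10h_0}$ leaves a margin of $6h_0$, and $\|u\|_{L^\infty(B_1)}\le 1$ controls the extra $\|u\|_{L^p}$ term.

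One bookkeeping point: every second difference in your telescoped identity is based at the same point $\xi\in K$, so each is bounded directly by the hypothesis. Right invariance of Haar measure is needed only for the leading term $\Delta_{Z,2^kh}u$.

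Finally, running the same identity toward smaller scales shows that $\Delta_{Z,2^{-m}h}u/(2^{-m}h)$ is Cauchy in $L^p(K)$, since consecutive increments have norm $O(2^{-m(\alpha-1)})$. This identifies $Zu$ as the $L^p$ limit without appealing to Lemma \ref{Le22}. It also covers $p=1$, where bounded difference quotients alone would only give a measure.
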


The following two lemmas are Lemma 2.7 and Theorem 1.1 of \cite{ZN26}, respectively.
\begin{lemma}[Caccioppoli-type inequality]\label{Le211}
Let $u$ be a weak subsolution to \eqref{eq0} in $B_2$, and $\omega  = {\left( {u - k} \right)_ + },\;k \in \mathbb{R}$. Then for any ${B_r} \equiv {B_r}\left( {{\xi _0}} \right) \subset {B_2} $ and non-negative function $\psi  \in C_0^\infty \left( {{B_r}} \right)$, it holds
\begin{align}\label{eq23}
   & \int_{{B_r}} {{\psi ^p}{{\left| {{\nabla _H}\omega } \right|}^p}d\xi }  + \int_{{B_r}} {\int_{{B_r}} {\frac{{{{\left| {\omega \left( \xi  \right)\psi \left( \xi  \right) - \omega \left( \eta  \right)\psi \left( \eta  \right)} \right|}^p}}}{{\left\| {{\eta ^{ - 1}} \circ \xi } \right\|_{{{\mathbb{H}}^n}}^{Q + sp}}} d\xi d\eta } } \nonumber \\
 \le &  c(\int_{{B_r}} {{\omega ^p}{{\left| {{\nabla _H}\psi } \right|}^p}d\xi }  + \int_{{B_r}} {\int_{{B_r}} {\max {{\{ \omega \left( \xi  \right),\omega \left( \eta  \right)\} }^p}\frac{{{{\left| {\psi \left( \xi  \right) - \psi \left( \eta  \right)} \right|}^p}}}{{\left\| {{\eta ^{ - 1}} \circ \xi } \right\|_{{{\mathbb{H}}^n}}^{Q + sp}}} d\xi d\eta } } \nonumber\\
 & + \mathop {{\rm{ess}}\;\sup }\limits_{\xi  \in {\rm{supp}}\;\psi } \int_{{{\mathbb{H}}^n}\backslash {B_r}} {\frac{{\omega {{\left( \eta  \right)}^{p - 1}}}}{{\left\| {{\eta ^{ - 1}} \circ \xi } \right\|_{{{\mathbb{H}}^n}}^{Q + sp}}}d\eta }  \cdot \int_{{B_r}} {\omega {\psi ^p}d\xi } ),
\end{align}
where $c = c\left( p \right)$. If $u$ is a weak supersolution to \eqref{eq0}, then \eqref{eq23} holds for $\omega  = {\left( {u - k} \right)_ - },\;k \in \mathbb{R}$.
\end{lemma}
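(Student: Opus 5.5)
This is the Caccioppoli-type inequality, Lemma \ref{Le211}, quoted from Lemma 2.7 of \cite{ZN26}. I would prove it by testing the weak subsolution inequality of Definition \ref{De26} with $\phi=\omega\psi^p$, where $\omega=(u-k)_+$. This test function is admissible: it is non-negative, lies in $HW_0^{1,p}(B_r)$, and $u\in HW^{1,p}_{loc}$. The inequality then splits into a local term and a nonlocal term, which I estimate separately.

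\textbf{Local term.} Write $\nabla_H\phi=\psi^p\nabla_H\omega+p\psi^{p-1}\omega\nabla_H\psi$, and note that $\nabla_H u=\nabla_H\omega$ on $\{u>k\}$. The local term becomes
\[
\int\psi^p|\nabla_H\omega|^p\,d\xi+p\int\psi^{p-1}\omega|\nabla_H\omega|^{p-2}\nabla_H\omega\cdot\nabla_H\psi\,d\xi .
\]
Young's inequality with exponents $p/(p-1)$ and $p$ bounds the second piece by
\[
\tfrac12\int\psi^p|\nabla_H\omega|^p\,d\xi+c\int\omega^p|\nabla_H\psi|^p\,d\xi .
\]

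\textbf{Nonlocal term.} Split $\mathbb{H}^n\times\mathbb{H}^n$ into $B_r\times B_r$ and the two symmetric off-diagonal regions $B_r\times(\mathbb{H}^n\setminus B_r)$; the exterior-exterior region contributes nothing because $\phi$ vanishes there.

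On $B_r\times B_r$ I use the standard pointwise inequality from the Euclidean fractional De Giorgi theory (Di Castro--Kuusi--Palatucci). Here $J_p$ is defined in the paper as $|a|^{p-1}a$, but the intended quantity is $|a|^{p-2}a$. The inequality reads
\begin{align*}
&|u(\xi)-u(\eta)|^{p-2}\bigl(u(\xi)-u(\eta)\bigr)\bigl(\omega(\xi)\psi^p(\xi)-\omega(\eta)\psi^p(\eta)\bigr)\\
&\quad\ge c^{-1}\,|\omega(\xi)\psi(\xi)-\omega(\eta)\psi(\eta)|^p-c\,\max\{\omega(\xi),\omega(\eta)\}^p\,|\psi(\xi)-\psi(\eta)|^p .
\end{align*}
This is purely algebraic, so its proof carries over verbatim from the Euclidean case. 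The argument runs as follows. Assume by symmetry $u(\xi)\ge u(\eta)$. If both points lie below level $k$, both sides vanish. If only $u(\xi)>k$, then $u(\xi)-u(\eta)\ge\omega(\xi)$, and the left side is at least $\omega(\xi)^p\psi^p(\xi)$. If both exceed $k$, reduce to $\omega$ and use
\[
\psi^p(\xi)\le\psi^p(\eta)+\varepsilon\psi^p(\eta)+c_\varepsilon|\psi(\xi)-\psi(\eta)|^p
\]
with a suitable $\varepsilon$ proportional to $(\omega(\xi)-\omega(\eta))/\omega(\xi)$.

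On the off-diagonal regions with $\xi\in B_r$ and $\eta\notin B_r$, the integrand is
\[
|u(\xi)-u(\eta)|^{p-2}\bigl(u(\xi)-u(\eta)\bigr)\,\omega(\xi)\psi^p(\xi),
\]
and this is only harmful when $u(\eta)>u(\xi)>k$. In that case
\[
|u(\xi)-u(\eta)|^{p-1}\le(u(\eta)-k)_+^{p-1}=\omega(\eta)^{p-1}.
\]
Taking the supremum over $\xi\in\operatorname{supp}\psi$ yields the tail-times-$\int\omega\psi^p$ term.

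\textbf{Conclusion.} Combining the two estimates and absorbing $\tfrac12\int\psi^p|\nabla_H\omega|^p$ into the left side gives the claimed inequality. The nonnegative factor $\Lambda\le1$ only helps: for $\Lambda>0$ divide appropriately, or note that the statement's constant absorbs it. Strictly, $\Lambda$ should multiply the Gagliardo term on the left, and I would include it there.

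For supersolutions, apply the result to $-u$, which is a subsolution of the same odd-homogeneous equation, with level $-k$.

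The main obstacle is the pointwise algebraic inequality on $B_r\times B_r$, specifically the case where both values exceed $k$. There I would follow Di Castro--Kuusi--Palatucci's Lemma 3.1 line by line, since nothing Heisenberg-specific enters beyond the kernel $\|\eta^{-1}\circ\xi\|^{-Q-sp}$, which remains symmetric.
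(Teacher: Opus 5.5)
The paper gives no proof of this lemma; it imports it as Lemma 2.7 of \cite{ZN26}. Your argument (testing with $\omega\psi^p$, Young's inequality on the local part, the Di Castro--Kuusi--Palatucci pointwise inequality on $B_r\times B_r$, and the bound $(u(\eta)-u(\xi))_+^{p-1}\le\omega(\eta)^{p-1}$ off the diagonal) is the standard proof and is correct in substance, but two small repairs are needed.

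First, take the case $u(\xi)\ge u(\eta)>k$ with $\psi(\xi)<\psi(\eta)$. There you need the elementary inequality with $\xi$ and $\eta$ interchanged, namely $\psi^p(\eta)\le(1+c\varepsilon)\psi^p(\xi)+c\,\varepsilon^{1-p}|\psi(\xi)-\psi(\eta)|^p$ with $\varepsilon=\delta(\omega(\xi)-\omega(\eta))/\omega(\xi)$. As you wrote it, your inequality bounds $\psi^p(\xi)$ from above, which cannot give a lower bound for $\omega(\xi)\psi^p(\xi)-\omega(\eta)\psi^p(\eta)$.

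Second, for small $\Lambda$, neither dividing by $\Lambda$ nor absorbing $\Lambda$ into $c$ works, since either produces a factor $1/\Lambda$. You can keep $\Lambda$ in front of the Gagliardo term on the left, as you finally propose. Alternatively, you can recover the stated form by combining the local estimate with $[\omega\psi]^p_{HW^{s,p}(B_r)}\le c(Q,p,s)\|\nabla_H(\omega\psi)\|^p_{L^p(B_r)}$, at the price of $c=c(Q,p,s)$ instead of $c(p)$.
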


\begin{lemma}[Local boundedness of weak subsolutions]\label{Le212}
Let $u \in HW_{loc}^{1,p}\left( \Omega  \right)$ $(1 < p < \infty )$ be a weak subsolution to \eqref{eq0}, and ${B_r} \equiv {B_r}\left( {{\xi _0}} \right) \subset \Omega ,\;r \in \left( {0,1} \right],$ here ${B_r}$ is a Kor\'{a}nyi ball defined by the C-C metric. Then for $\delta  \in \left( {0,1} \right]$, there exists a positive constant $c = c\left( {n,p,s} \right)$ such that
\begin{equation}\label{eq16}
\mathop {{\rm{ess}}\sup u}\limits_{{B_{\frac{r}{2}}}\left( {{\xi _0}} \right)}  \le \delta {{\rm{Tai}}{{\rm{l}}_{p - 1,sp,p}}}({u_ + };{\xi _0},\frac{r}{2}) + c{\delta ^{ - \frac{{\left( {p - 1} \right)\kappa }}{{p\left( {\kappa  - 1} \right)}}}}{\left( {\fint_{{B_r}} {u_ + ^pd\xi } } \right)^{\frac{1}{p}}},
\end{equation}
where
\[\kappa  = \left\{ \begin{array}{l}
\frac{Q}{{Q - p}},\;\;\;\;\;\;\;\;1 < p < Q,\\
\frac{{qQ}}{{p\left( {Q - q} \right)}},\;\;\;p \ge Q,\;\frac{{Qp}}{{Q + p}} < q < Q.
\end{array} \right.\;\;\;\]
\end{lemma}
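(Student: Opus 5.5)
The plan is to prove Lemma \ref{Le212} by a De Giorgi iteration on shrinking balls and increasing levels, following the nonlocal scheme of Di Castro--Kuusi--Palatucci. The Caccioppoli inequality of Lemma \ref{Le211} supplies the energy estimate, and the Folland--Stein Sobolev inequality on $\mathbb{H}^n$ supplies the gain of integrability. Fix $k>0$ (to be chosen) and set, for $j=0,1,2,\dots$,
\[
r_j=\frac r2\left(1+2^{-j}\right),\qquad \tilde r_j=\frac{r_j+r_{j+1}}{2},\qquad k_j=k\left(1-2^{-j}\right),
\]
\[
w_j=(u-k_j)_+,\qquad Y_j=\Big(\fint_{B_{r_j}} w_j^p\,d\xi\Big)^{1/p}.
\]
Let $\psi_j\in C_0^\infty(B_{\tilde r_j})$ equal $1$ on $B_{r_{j+1}}$ with $|\nabla_H\psi_j|\le c2^j/r$.

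\textbf{Energy step.} Apply Lemma \ref{Le211} on $B_{r_j}$ with $\omega=w_{j+1}$ and $\psi=\psi_j$.
\begin{itemize}
\item The local gradient term is bounded by $c2^{jp}r^{-p}\int_{B_{r_j}}w_j^p$.
\item The nonlocal cutoff term is bounded by $c2^{jp}r^{-sp}\int_{B_{r_j}}w_j^p$, using $|\psi(\xi)-\psi(\eta)|\le c2^j r^{-1}\|\eta^{-1}\circ\xi\|$ and $r\le1$.
\item For the tail factor, since $\xi\in\operatorname{supp}\psi_j\subset B_{\tilde r_j}$, Lemma \ref{Le29} gives
\[
\sup_{\xi}\int_{\mathbb{H}^n\setminus B_{r_j}}\frac{w_{j+1}^{p-1}(\eta)}{\|\eta^{-1}\circ\xi\|^{Q+sp}}\,d\eta\le c\,2^{j(Q+sp)}\,r^{-sp}\,{\rm Tail}_{p-1,sp,p}\Big(u_+;\xi_0,\tfrac r2\Big)^{p-1},
\]
with the normalization of \eqref{eq26}.
\item On the set where $w_{j+1}>0$ we have $w_j\ge k2^{-j-1}$. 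Hence $\int w_{j+1}\le (k2^{-j-1})^{1-p}\int w_j^p$.
\end{itemize}
Choosing $k\ge\delta\,{\rm Tail}(u_+;\xi_0,r/2)$ turns the tail contribution into $c\,2^{j(Q+sp+p)}\delta^{1-p}r^{-p}\int_{B_{r_j}}w_j^p$. Together, the three pieces give
\[
\int_{B_{\tilde r_j}}|\nabla_H(w_{j+1}\psi_j)|^p\le c\,b^j\,\delta^{1-p}r^{-p}\int_{B_{r_j}}w_j^p
\]
for some $b=b(Q,p,s)$.

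\textbf{Sobolev step.}
\begin{itemize}
\item If $p<Q$, use the Folland--Stein inequality $\|v\|_{L^{p\kappa}}\le c\|\nabla_H v\|_{L^p}$ for $v\in HW^{1,p}_0$, with $\kappa=Q/(Q-p)$.
\item If $p\ge Q$, use the $HW^{1,q}_0$ Sobolev inequality with $q^*=qQ/(Q-q)=p\kappa$, combined with H\"older to return to $L^p$ norms of $\nabla_H$. This is where the two-case definition of $\kappa$ enters.
\end{itemize}
H\"older's inequality and Chebyshev's estimate $|B_{r_j}\cap\{w_{j+1}>0\}|\le (k2^{-j-1})^{-p}\int w_j^p$ then give
\[
\frac{Y_{j+1}}{k}\le c\,b^j\,\delta^{-\frac{p-1}{p}}\Big(\frac{Y_j}{k}\Big)^{1+\frac{\kappa-1}{\kappa}} .
\]
The averaged normalization makes the powers of $r$ cancel.

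\textbf{Conclusion.} By the standard fast geometric convergence lemma, $Y_j\to0$ provided
\[
Y_0/k\le c^{-1}\delta^{\frac{(p-1)\kappa}{p(\kappa-1)}}b^{-\kappa^2/(\kappa-1)^2}.
\]
So I would take
\[
k=\delta\,{\rm Tail}\Big(u_+;\xi_0,\tfrac r2\Big)+c\,\delta^{-\frac{(p-1)\kappa}{p(\kappa-1)}}\Big(\fint_{B_r}u_+^p\Big)^{1/p},
\]
which satisfies both requirements on $k$. Letting $j\to\infty$ gives $u\le k$ a.e.\ on $B_{r/2}$, which is \eqref{eq16}.

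The main obstacle is the bookkeeping of the tail term. It must be absorbed at each level with the correct power of $\delta$, and it must be checked that the growth factor $2^{j(Q+sp)}$ from Lemma \ref{Le29} stays geometric. The case $p\ge Q$, where the Sobolev exponent must be routed through the auxiliary $q$, also needs care.
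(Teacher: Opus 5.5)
Your proof is correct. It is the standard De Giorgi iteration (Di Castro--Kuusi--Palatucci, in the mixed form of Garain--Kinnunen): Lemma \ref{Le211} serves as the energy estimate, the Folland--Stein inequality gives the gain of integrability, and your choice of $k$ reproduces exactly the exponent $\frac{(p-1)\kappa}{p(\kappa-1)}$ of \eqref{eq16}. The paper gives no argument of its own here; it quotes the statement as Theorem 1.1 of \cite{ZN26}, so your write-up supplies what the paper takes on citation.

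One bookkeeping fix is needed. $\mathrm{Tail}_{p-1,sp,p}$ in \eqref{eq26} carries the weight $(r/2)^{p}$, not $(r/2)^{sp}$. The tail bullet must therefore read $c\,2^{j(Q+sp)}r^{-p}\,\mathrm{Tail}(u_+;\xi_0,\tfrac r2)^{p-1}$; the version with $r^{-sp}$ does not hold in general for small $r$. The power $r^{-p}$ is exactly what matches the local gradient term, and your next display already uses it, so nothing downstream changes.

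Your claim that the powers of $r$ cancel also checks out in both regimes:
\begin{itemize}
\item For $p<Q$ one has $Q(1-\kappa^{-1})=p$.
\item For $p\ge Q$ the H\"older step contributes $|B_{r_j}|^{p/q-1}\sim r^{Q(p/q-1)}$, and $Q(p/q-1)+Q(1-\kappa^{-1})=p$. The auxiliary $q$ is fixed in terms of $Q,p$, so the constant keeps the stated dependence.
\end{itemize}
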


\begin{lemma}[\cite{L19}]\label{LeA1}
Let $a,b \in {\mathbb{R}^N}$. Then for $p \ge 2$, we have
\begin{equation}\label{eqA1}
  \left\langle {{{\left| a \right|}^{p - 2}}a - {{\left| b \right|}^{p - 2}}b,a - b} \right\rangle  \ge {2^{2 - p}}{\left| {a - b} \right|^p};
\end{equation}
\begin{equation}\label{eqA2}
 \left\langle {{{\left| a \right|}^{p - 2}}a - {{\left| b \right|}^{p - 2}}b,a - b} \right\rangle  \ge \frac{4}{{{p^2}}}{\left| {{{\left| a \right|}^{\frac{{p - 2}}{2}}}a - {{\left| b \right|}^{\frac{{p - 2}}{2}}}b} \right|^2};
\end{equation}
\begin{equation}\label{eqA3}
 \left| {{{\left| a \right|}^{p - 2}}a - {{\left| b \right|}^{p - 2}}b} \right| \le \left( {p - 1} \right)\left( {{{\left| a \right|}^{\frac{{p - 2}}{2}}} + {{\left| b \right|}^{\frac{{p - 2}}{2}}}} \right)\left| {{{\left| a \right|}^{\frac{{p - 2}}{2}}}a - {{\left| b \right|}^{\frac{{p - 2}}{2}}}b} \right|.
\end{equation}
\end{lemma}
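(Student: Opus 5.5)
This is an algebraic inequality in $\mathbb{R}^N$, and nothing specific to the Heisenberg group is used. I will prove the three estimates \eqref{eqA1}--\eqref{eqA3} separately. The two basic tools are the fundamental theorem of calculus along the segment $a_t=b+t(a-b)$, $t\in[0,1]$, and the derivative identity
\[
\frac{d}{dt}\left(|a_t|^{q-2}a_t\right)=|a_t|^{q-2}\Big((a-b)+(q-2)\frac{\langle a_t,a-b\rangle}{|a_t|^2}a_t\Big).
\]

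\textbf{Proof of \eqref{eqA1}.} Applying the identity with $q=p$ and pairing with $a-b$ gives
\[
\left\langle |a|^{p-2}a-|b|^{p-2}b,\,a-b\right\rangle=\int_0^1|a_t|^{p-2}\Big(|a-b|^2+(p-2)\frac{\langle a_t,a-b\rangle^2}{|a_t|^2}\Big)dt\ge|a-b|^2\int_0^1|a_t|^{p-2}dt .
\]
It therefore suffices to show $\int_0^1|a_t|^{p-2}\,dt\ge c|a-b|^{p-2}$. The triangle inequality gives $|a_t|\ge \big||t-t_0|\,|a-b|-\mathrm{dist}\big|$, where $t_0$ is the minimiser of $|a_t|$. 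More simply, $|a_t|\ge |a-b|\,|t-t_*|$ for the projection parameter $t_*$. Hence
\[
\int_0^1|a_t|^{p-2}dt\ge|a-b|^{p-2}\min_{t_*}\int_0^1|t-t_*|^{p-2}dt=\frac{2^{2-p}}{p-1}|a-b|^{p-2}.
\]
This gives the bound with constant $2^{2-p}/(p-1)$. To reach the stated sharp constant $2^{2-p}$, I would instead use the classical route: write $\langle |a|^{p-2}a-|b|^{p-2}b,a-b\rangle=\tfrac12(|a|^{p-2}+|b|^{p-2})|a-b|^2+\tfrac12(|a|^{p-2}-|b|^{p-2})(|a|^2-|b|^2)$. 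The second term is nonnegative, and convexity gives $|a|^{p-2}+|b|^{p-2}\ge 2^{3-p}|a-b|^{p-2}$, since $|a-b|\le|a|+|b|$ and $\big((x+y)/2\big)^{p-2}\le (x^{p-2}+y^{p-2})/2$ for $p-2\ge1$. For $2\le p<3$ this convexity step fails. In that range I would argue by direct homogeneity: normalise $|a-b|=1$ and minimise over the remaining configuration, or else accept the constant $2^{2-p}/(p-1)$, which is all that any later use requires.

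\textbf{Proof of \eqref{eqA2}.} Set $V(a)=|a|^{\frac{p-2}{2}}a$. The plan is to compare $\langle |a|^{p-2}a-|b|^{p-2}b,\,a-b\rangle$ with $|V(a)-V(b)|^2$ through the integral representations along the segment. Cauchy--Schwarz in $t$ gives
\[
|V(a)-V(b)|^2\le\Big(\int_0^1|DV(a_t)|\,|a-b|\,dt\Big)^2\le \frac{p^2}{4}\int_0^1|a_t|^{p-2}dt\,|a-b|^2,
\]
because $|DV(x)|\le\frac p2|x|^{\frac{p-2}{2}}$. The left side of \eqref{eqA2} is at least $|a-b|^2\int_0^1|a_t|^{p-2}dt$ by the computation in the proof of \eqref{eqA1}. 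Combining the two displays yields \eqref{eqA2}.

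\textbf{Proof of \eqref{eqA3}.} Write $|a|^{p-2}a=W(V(a))$ with $W(y)=|y|^{\frac{p-2}{p}}y$, and set $y_t=V(b)+t(V(a)-V(b))$. Then $|DW(y)|\le\frac{2(p-1)}{p}|y|^{\frac{p-2}{p}}\le (p-1)|y|^{\frac{p-2}{p}}$, and convexity of the norm gives $|y_t|^{\frac{p-2}{p}}\le\max(|V(a)|,|V(b)|)^{\frac{p-2}{p}}\le |a|^{\frac{p-2}{2}}+|b|^{\frac{p-2}{2}}$. The fundamental theorem of calculus along $y_t$ then gives \eqref{eqA3}.

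The main obstacle is achieving the exact constant $2^{2-p}$ in \eqref{eqA1} for $2\le p<3$. That constant is harmless for the paper, which only uses the inequality up to constants depending on $p$.
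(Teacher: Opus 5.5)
Your arguments for \eqref{eqA2} and \eqref{eqA3} are correct. The paper itself gives no proof and only cites \cite{L19}. The segment representation, $\|DV(x)\|\le\frac p2|x|^{\frac{p-2}{2}}$ and Cauchy--Schwarz give exactly the constant $4/p^2$. The factorisation through $W$ even yields \eqref{eqA3} with the smaller factor $\frac{2(p-1)}{p}\max\{|a|,|b|\}^{\frac{p-2}{2}}$. Your Pythagorean bound $|a_t|\ge|a-b|\,|t-t_*|$ correctly gives \eqref{eqA1} with constant $2^{2-p}/(p-1)$. The identity plus convexity gives the stated constant $2^{2-p}$ when $p\ge3$.

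The gap is \eqref{eqA1} with the constant $2^{2-p}$ for $2<p<3$. ``Normalise and minimise over the remaining configuration'' names a task, not an argument. As you note, the pointwise bound $|a|^{p-2}+|b|^{p-2}\ge2^{3-p}|a-b|^{p-2}$ is false there (take $b=0$). The missing idea is to keep the nonnegative second term of the identity and use concavity. Both sides depend only on $|a|$, $|b|$ and $\sigma=|a-b|^2$, and
\[
\left\langle |a|^{p-2}a-|b|^{p-2}b,a-b\right\rangle-2^{2-p}|a-b|^p=\tfrac12\left(|a|^{p-2}+|b|^{p-2}\right)\sigma+\tfrac12\left(|a|^{p-2}-|b|^{p-2}\right)\left(|a|^2-|b|^2\right)-2^{2-p}\sigma^{p/2}
\]
is concave in $\sigma\in[(|a|-|b|)^2,(|a|+|b|)^2]$ because $p/2\ge1$. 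Its minimum is therefore attained at an endpoint, i.e. for collinear $a,b$. By homogeneity and symmetry take $|a|=1\ge|b|=t$. The antiparallel endpoint reduces to $\frac{1+t^{p-1}}{2}\ge\left(\frac{1+t}{2}\right)^{p-1}$, which is convexity of $x^{p-1}$. The parallel endpoint reduces to $1-t^{p-1}\ge1-t\ge(1-t)^{p-1}\ge2^{2-p}(1-t)^{p-1}$. This proves \eqref{eqA1} for all $p\ge2$ at once, with no split at $p=3$.

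You are right that the weaker constant suffices for the paper: \eqref{eqA1} enters only in \eqref{eq411}, where it is absorbed into $c(p,\alpha)$. Without the step above, however, your proposal does not prove the lemma as stated.
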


\begin{lemma}[\cite{BLS18}]\label{LeA2}
Let $ k,l,m,n \in \mathbb{R}$, $\alpha \ge 1$. Then for $p \ge 2$, we have
\begin{equation}\label{eqA4}
\left| {{J_p}\left( k \right) - {J_p}\left( l \right)} \right| \le \frac{{2\left( {p - 1} \right)}}{p}\left( {{{\left| k \right|}^{\frac{{p - 2}}{2}}} + {{\left| l \right|}^{\frac{{p - 2}}{2}}}} \right)\left| {{{\left| k \right|}^{\frac{{p - 2}}{2}}}k - {{\left| l \right|}^{\frac{{p - 2}}{2}}}l} \right|;
\end{equation}
\begin{align}\label{eqA5}
   &\left( {{J_p}\left( {k - l} \right) - {J_p}\left( {m - n} \right)} \right)\left( {{J_{\alpha  + 1}}\left( {k - m} \right) - {J_{\alpha  + 1}}\left( {l - n} \right)} \right) \nonumber\\
   \ge&  \frac{1}{{c\left( {p,\alpha } \right)}}{\left| {{{\left| {k - m} \right|}^{\frac{{\alpha  - 1}}{p}}}\left( {k - m} \right) - {{\left| {l - n} \right|}^{\frac{{\alpha  - 1}}{p}}}\left( {l - n} \right)} \right|^p};
\end{align}
\begin{align}\label{eqA6}
   & \left( {{J_p}\left( {k - l} \right) - {J_p}\left( {m - n} \right)} \right)\left( {{J_{\alpha  + 1}}\left( {k - m} \right) - {J_{\alpha  + 1}}\left( {l - n} \right)} \right) \nonumber\\
  \ge &  \frac{{2\left( {p - 1} \right)}}{{{p^2}}}{\left| {{{\left| {k - l} \right|}^{\frac{{p - 2}}{2}}}\left( {k - l} \right) - {{\left| {m - n} \right|}^{\frac{{p - 2}}{2}}}\left( {m - n} \right)} \right|^2}  \left( {{{\left| {k - m} \right|}^{\alpha  - 1}} + {{\left| {l - n} \right|}^{\alpha  - 1}}} \right).
\end{align}
\end{lemma}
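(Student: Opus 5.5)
Throughout, I read $J_p(a)$ as $|a|^{p-2}a$, which is the convention in \cite{BLS18}. The literal definition $|a|^{p-1}a$ printed above looks like a typo, since (A4)–(A6) are stated for the $p-1$ homogeneity. All three inequalities are one-dimensional real-variable facts. My plan is to prove them by the mean value theorem, by monotonicity, and by reducing the products in (A5)–(A6) to products of quantities that carry the same sign.

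\emph{Step 1: inequality (A4).} Set $K=|k|^{\frac{p-2}{2}}k$ and $L=|l|^{\frac{p-2}{2}}l$. Since $|K|=|k|^{p/2}$, we have $J_p(k)=g(K)$ with $g(t)=|t|^{\frac{p-2}{p}}t$. The function $g$ is $C^1$ with $g'(t)=\frac{2(p-1)}{p}|t|^{\frac{p-2}{p}}$, and $g'$ is monotone in $|t|$. The mean value theorem then gives
\[|J_p(k)-J_p(l)|\le \tfrac{2(p-1)}{p}\max\{|K|,|L|\}^{\frac{p-2}{p}}|K-L|.\]
Finally $|K|^{\frac{p-2}{p}}=|k|^{\frac{p-2}{2}}$, and the maximum is bounded by the sum, which is (A4).

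\emph{Step 2: reduction for (A5)–(A6).} Put $a=k-l$, $b=m-n$, $A=k-m$, $B=l-n$. The key identity is $A-B=a-b$. Both $J_p$ and $J_{\alpha+1}$ are increasing, so $J_p(a)-J_p(b)$ and $J_{\alpha+1}(A)-J_{\alpha+1}(B)$ both have the sign of $a-b$. Hence the left-hand side equals $|J_p(a)-J_p(b)|\,|J_{\alpha+1}(A)-J_{\alpha+1}(B)|$. Next, writing $J_{\alpha+1}(A)-J_{\alpha+1}(B)=\alpha(A-B)\int_0^1|B+t(A-B)|^{\alpha-1}dt$, I will use the standard elementary bound $\int_0^1|B+t(A-B)|^{\alpha-1}dt\ge c(\alpha)(|A|+|B|)^{\alpha-1}$, valid for $\alpha\ge1$. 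For (A6) I want the constant exactly $1$ in front of $|A|^{\alpha-1}+|B|^{\alpha-1}$. One route is the case split $AB\ge0$ versus $AB<0$, with direct integration in each case. Another route is to absorb the factor into the $J_p$ part, where there is slack: the sharp constant there is $4(p-1)/p^2$, while only $2(p-1)/p^2$ is needed.

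\emph{Step 3: inequality (A6).} Multiply the bound from Step 2 by $|J_p(a)-J_p(b)|$. Then use the one-dimensional form of \eqref{eqA2}, namely $(J_p(a)-J_p(b))(a-b)\ge\frac{4(p-1)}{p^2}\big||a|^{\frac{p-2}{2}}a-|b|^{\frac{p-2}{2}}b\big|^2$. This form follows from Cauchy–Schwarz applied to $\int_0^1 g'$ along the segment; its constant $4(p-1)/p^2$ is sharper than the $4/p^2$ recorded in \eqref{eqA2}. Since $a-b=A-B$, this yields (A6) once the constant from Step 2 is fixed.

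\emph{Step 4: inequality (A5).} From \eqref{eqA1} in one dimension, $|J_p(a)-J_p(b)|\ge 2^{2-p}|a-b|^{p-1}$. Combining this with Step 2, the left-hand side of (A5) is at least $c\,|A-B|^{p}(|A|+|B|)^{\alpha-1}$. On the other hand, put $\theta=\frac{\alpha-1}{p}+1$. The mean value theorem applied to $t\mapsto|t|^{\theta-1}t$ gives
\[\big||A|^{\frac{\alpha-1}{p}}A-|B|^{\frac{\alpha-1}{p}}B\big|^p\le \theta^p(|A|+|B|)^{\alpha-1}|A-B|^p,\]
which closes (A5) with $c=c(p,\alpha)$.

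The main obstacle is the constant bookkeeping in (A6), specifically obtaining $(|A|^{\alpha-1}+|B|^{\alpha-1})$ with an explicit constant. I would first try the crude bound $\int_0^1|B+t(A-B)|^{\alpha-1}dt\ge \frac{1}{2^{\alpha}}\max\{|A|,|B|\}^{\alpha-1}$ together with the factor-of-two slack from Step 3. If that does not fit for all $\alpha$, I would follow \cite{BLS18} and do the explicit two-case integration.
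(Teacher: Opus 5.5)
Your proofs of \eqref{eqA4} and \eqref{eqA5} are complete and correct. Your reading $J_p(a)=|a|^{p-2}a$ is also the convention the paper actually uses. The paper itself gives no argument for this lemma and only cites \cite{BLS18}.

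Your reduction for \eqref{eqA6} is sound as far as it goes. Using $a-b=A-B$, the sign argument, and the sharp one-dimensional bound $(J_p(a)-J_p(b))(a-b)\ge\frac{4(p-1)}{p^2}\bigl||a|^{\frac{p-2}{2}}a-|b|^{\frac{p-2}{2}}b\bigr|^2$, everything comes down to
\[
\frac{J_{\alpha+1}(A)-J_{\alpha+1}(B)}{A-B}\ \ge\ \frac12\bigl(|A|^{\alpha-1}+|B|^{\alpha-1}\bigr),\qquad A\ne B .
\]
This is exactly the step you leave open, and the route you plan to try first cannot close it, because there is no room for loss:
\begin{itemize}
\item For $\alpha=1$, inequality \eqref{eqA6} is literally the sharp one-dimensional inequality above.
\item For every $\alpha\ge1$, the displayed bound is an equality at $B=-A$. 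Taking $b\to a$ with $A=-B=\frac{a-b}{2}$ shows that \eqref{eqA6} is itself sharp.
\item Your crude estimate gives only $\alpha2^{-\alpha}\max\{|A|,|B|\}^{\alpha-1}$ for the left-hand side. Since $\alpha2^{-\alpha}<1$ for all $\alpha\ge1$, it fails at $|A|=|B|$ for \emph{every} $\alpha$, not merely for some.
\item Aiming instead for constant $1$ in front of $|A|^{\alpha-1}+|B|^{\alpha-1}$ in the difference quotient means aiming at a false inequality; take $\alpha=1$.
\end{itemize}
So, as written, \eqref{eqA6} is not proved.

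The missing step needs no case split and no integration. Expanding gives the identity
\[
2\bigl(J_{\alpha+1}(A)-J_{\alpha+1}(B)\bigr)-(A-B)\bigl(|A|^{\alpha-1}+|B|^{\alpha-1}\bigr)=\bigl(|A|^{\alpha-1}-|B|^{\alpha-1}\bigr)(A+B).
\]
Multiplying by $A-B$ turns the right-hand side into $\bigl(|A|^{\alpha-1}-|B|^{\alpha-1}\bigr)\bigl(|A|^2-|B|^2\bigr)\ge0$ for $\alpha\ge1$. Hence
\[
\bigl(J_{\alpha+1}(A)-J_{\alpha+1}(B)\bigr)(A-B)\ge\tfrac12(A-B)^2\bigl(|A|^{\alpha-1}+|B|^{\alpha-1}\bigr).
\]
For $a\ne b$, write the left-hand side of \eqref{eqA6} as $(J_p(a)-J_p(b))(a-b)\cdot\frac{J_{\alpha+1}(A)-J_{\alpha+1}(B)}{A-B}$, where both factors are nonnegative. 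Multiplying the two lower bounds then gives exactly the constant $\frac{2(p-1)}{p^2}$. For $a=b$ both sides vanish.
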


\section{Iteration schme of Moser-type}\label{Section 3}
In this section, we employ the method of Nirenberg difference and Sobolev-type inequality on the Heisenberg group to establish the crucial Morrey-type iterative scheme for proving the theorems.

\begin{proposition}\label{Pro41}
Let $2\le p <\infty,\;0<s<1$ and $0\le \Lambda \le 1$. Assume $u \in HW_{{\rm{loc}}}^{1,p}\left( {{B_2}\left( {{\xi _0}} \right) \cap L_{sp}^{p - 1}\left( {{\mathbb{H}^n}} \right)} \right)$ is a weak solution of \eqref{eq0} in ${{B_2}\left( {{\xi _0}} \right)}$ satisfying
\begin{equation}\label{eq41}
{\left\| u \right\|_{{L^\infty }\left( {{B_1}\left( {{\xi _0}} \right)} \right)}} \le 1,\int_{{\mathbb{H}^n}\backslash {B_1}\left( {{\xi _0}} \right)} {\frac{{{{\left| {u\left( \eta  \right)} \right|}^{p - 1}}}}{{\left\| \eta  \right\|_{{\mathbb{H}^n}}^{Q + sp}}}d\eta }  \le 1.
\end{equation}
If $0<h_0<\frac{1}{10}$ and $R$ satisfy $4{h_0}<R\le 1-5{h_0}$, and ${\nabla _H}u \in {L^q}\left( {{B_{R + 4{h_0}}}\left( {{\xi _0}} \right)} \right)$ for some $q \ge p$, then
\begin{equation}\label{eq42}
 \mathop {\sup }\limits_{0 < \left| h \right| < {h_0}} \int_{{B_{R - {4h_0}}}} {{{\left| {\frac{{\Delta _{Z,h}^2u}}{{{{\left| h \right|}^{\frac{1}{{q + 1}} + 1}}}}} \right|}^{q + 1}}d\xi } \le c(1+\Lambda) \left(  {\int_{{B_{R + {4h_0}}}} {{{\left| {{\nabla _H}u} \right|}^q}d\xi } + 1} \right),
\end{equation}
where $c=c(Q,h_0,p,q,s)>0$.
\end{proposition}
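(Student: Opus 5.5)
This follows the Brasco--Lindgren--Schikorra strategy for the fractional $p$-Laplacian, adapted to the mixed operator and to Nirenberg differences along a left invariant horizontal field $Z$.

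\textbf{Test function.} Fix $0<|h|<h_0$ and a cut-off $\psi\in C_0^\infty(B_{R})$ with $\psi\equiv1$ on $B_{R-4h_0}$ and $|\nabla_H\psi|\le c/h_0$. Insert in the weak formulation the discrete derivative of a power of the first difference,
\[
\phi=\Delta_{Z,-h}\Bigl(\frac{J_{q-p+2}(\Delta_{Z,h}u)}{|h|^{1+\vartheta(q-p+1)}}\,\psi^p\Bigr),
\]
with $\vartheta\in(0,1)$ to be tuned at the end. This is admissible because $u\in L^\infty(B_1)$ and $\nabla_H u\in L^q(B_{R+4h_0})$. By translation invariance of the Haar measure and left invariance of $X_i$, the equation becomes the ``differentiated'' identity in which $|\nabla_Hu|^{p-2}\nabla_Hu$ is replaced by its difference $\Delta_{Z,h}(|\nabla_Hu|^{p-2}\nabla_Hu)$, and the nonlocal kernel term by $J_p(u(\xi e^{hZ})-u(\eta e^{hZ}))-J_p(u(\xi)-u(\eta))$. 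For the kernel we use that the Kor\'anyi norm of $(\eta e^{hZ})^{-1}\circ \xi e^{hZ}$ agrees with that of $\eta^{-1}\circ\xi$ only up to the BCH correction \eqref{eq23}; it is safer to rewrite the translated measure via right translation and control the discrepancy by the tail as in Lemmas \ref{Le29}, \ref{Le210}.

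\textbf{Local term.} By \eqref{eqA1} and \eqref{eqA2} of Lemma \ref{LeA1}, the principal local part bounds from below
\[
\int |\Delta_{Z,h}\nabla_Hu|^2\,|\Delta_{Z,h}u|^{q-p}\,(|\nabla_Hu|+|\nabla_Hu(\cdot e^{hZ})|)^{p-2}\psi^p .
\]
This term is nonnegative and will simply be discarded, or used to absorb. The terms where $\nabla_H$ falls on $\psi^p$ are handled with \eqref{eqA3}, Young's inequality and the bound $|\Delta_{Z,h}u|\le |h|\int_0^1|Zu(\cdot e^{thZ})|\,dt$. By Lemma \ref{Le22} they are $\lesssim h_0^{-1}|h|^{\dots}\int_{B_{R+4h_0}}|\nabla_Hu|^q$, using $q\ge p$ and $\|u\|_\infty\le1$.

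\textbf{Nonlocal term: local--local part.} For $\xi,\eta\in B_{R+h_0}$ we apply \eqref{eqA5} of Lemma \ref{LeA2} with $\alpha=q-p+1$. This gives the positive quantity
\[
\Bigl[\frac{J_{(q+1)/p}(\Delta_{Z,h}u)\,\psi}{|h|^{(1+\vartheta(q-p+1))/p}}\Bigr]^p_{HW^{s,p}}
\]
on the left, minus error terms coming from $\psi(\xi)-\psi(\eta)$. These errors are treated with \eqref{eqA4} and Young's inequality, and bounded by $c\int|\Delta_{Z,h}u/h|^{q}\lesssim \int|\nabla_Hu|^q$.

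\textbf{Nonlocal term: tail part.} Here $\eta\notin B_{R+h_0}$ while $\xi\in\operatorname{supp}\psi$, so the kernel is bounded by $h_0^{-Q-sp}$. Lemma \ref{Le29}, \eqref{eq41} and $\|u\|_\infty\le 1$ control it by $c(h_0)\int|\Delta_{Z,h}u|^{q-p+1}/|h|^{1+\vartheta(q-p+1)}$, which is again $\le c\int|\nabla_Hu|^q+c$ provided $\vartheta\le1$. The whole nonlocal term carries the factor $\Lambda$, which explains the factor $(1+\Lambda)$.

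\textbf{Conclusion.} Combining the above gives a uniform bound on the fractional seminorm of $\psi J_{(q+1)/p}(\Delta_{Z,h}u)\,|h|^{-\dots}$. Alternatively, one keeps the local gradient term and uses the exact gradient version with $\nabla_H$ in place of the seminorm; this is easier, since then no fractional order is lost. We then take a further difference in $Z$ at step $h$ and use the elementary inequality $|a-b|^{q+1}\lesssim |J_{(q+1)/p}(a)-J_{(q+1)/p}(b)|^p\,\cdot$(bounded factor), valid because $|\Delta_{Z,h}u|\le2$. This converts the estimate into one for
\[
\int_{B_{R-4h_0}}|\Delta^2_{Z,h}u|^{q+1}\big/|h|^{(1+\frac1{q+1})(q+1)}=\int_{B_{R-4h_0}}|\Delta^2_{Z,h}u|^{q+1}\big/|h|^{q+2}.
\]
The exponents match once $\vartheta$ is chosen so that $1+\vartheta(q-p+1)+p=q+2$, i.e.\ $\vartheta=1$. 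If the exponent bookkeeping falls short, we iterate through Lemma \ref{Le26} to trade fractional order.

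\textbf{Main obstacle.} The delicate point is this exponent bookkeeping together with the noncommutativity. The translated kernel is not invariant under right translation by $e^{hZ}$, and commuting $\Delta_{Z,h}$ with $X_i$ produces $T$-derivatives. I would handle both by testing with differences along right translations only, for which the $X_i$ commute exactly with the difference operators. The $T$ terms then do not appear, and the kernel issue is absorbed into the tail estimate.
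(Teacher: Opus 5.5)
Your overall scheme is the paper's. You test the differenced equation with $J_{\alpha+1}(\Delta_{Z,h}u)\varphi^p/|h|^{1+\theta\alpha}$. You bound the local part from below by \eqref{eqA1}--\eqref{eqA3} and the nonlocal part by \eqref{eqA4}--\eqref{eqA6}, and treat tails with Lemmas~\ref{Le29} and~\ref{Le210}. You then keep the local gradient term, put $h'=h$ and finish with Lemma~\ref{Le22}. Discarding the gradient term, your first option, is not available: the fractional seminorm carries the factor $\Lambda$ and only yields order $s$ in the second difference.

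\textbf{The exponent bookkeeping.} This is the genuine gap: it is internally inconsistent and does not close as you set it up. You apply \eqref{eqA5} with $\alpha=q-p+1$, and your tail integrand is $|\Delta_{Z,h}u|^{q-p+1}$. So the test function carries the power $q-p+1$ of $\Delta_{Z,h}u$, while $\vartheta=1$ fixes the weight $|h|^{-(q-p+2)}$. With the tail bound you describe (kernel $\le c\,h_0^{-Q-sp}$, $|u|\le 1$ on $B_1$, Lemma~\ref{Le29}), the tail contributes
\[
\int_{B_R}\frac{|\Delta_{Z,h}u|^{q-p+1}}{|h|^{q-p+2}}\,d\xi=\frac{1}{|h|}\int_{B_R}\Bigl(\frac{|\Delta_{Z,h}u|}{|h|}\Bigr)^{q-p+1}d\xi .
\]
This is not bounded uniformly in $h$ by $\int|\nabla_Hu|^q$. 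The correct requirement is $1+\vartheta(q-p+1)\le q-p+1$, not $\vartheta\le 1$. It contradicts the $\vartheta=1$ you need at the end, and for $q=p$ it cannot hold at all.

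In general, with power $\alpha$ and weight $|h|^{-(1+\theta\alpha)}$, this tail forces $1+\theta\alpha\le\alpha$. The second-difference step produces $L^{\alpha-1+p}$ with weight $|h|^{-(1+\theta\alpha+p)}$. One checks that reaching $L^{q+1}$ with $|h|^{-(q+2)}$ is then possible only for $\alpha=q-p+2$ and $\theta\alpha=q-p+1$. That is exactly the paper's Step~5 choice, and it is what your $J_{(q+1)/p}$ implicitly presupposes. It gives \eqref{eq42} directly, without the bounded-factor trick.

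\textbf{The local--local nonlocal error.} In the paper's $J_{11}$ you cannot get away with $\int|\Delta_{Z,h}u/h|^q$. After using that the cut-off is Lipschitz, the factor $|u(\xi)-u(\eta)|^{p-2}$ sits against $\|\eta^{-1}\circ\xi\|_{\mathbb{H}^n}^{2-Q-sp}$, which is not integrable when $sp\ge 2$. So the regularity of $u$ hidden in that factor must be used. The paper splits it off by Young into $[u]^q_{HW^{\sigma,q}}$ with $\sigma=\frac{s(p-2-\varepsilon)}{p-2}<s$, and controls this through Lemmas~\ref{Le26} and~\ref{Le22}, as in \eqref{eq458} and \eqref{eq473}.

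\textbf{The remedy for noncommutativity.} The fix in your last paragraph is backwards. The $X_i$ and the kernel are invariant under left translations $\xi\mapsto g\circ\xi$, whereas $v(\xi e^{hZ})$ is a right translation. For it,
\[
X_i\bigl(v(\cdot\,e^{hZ})\bigr)=(X_iv)(\cdot\,e^{hZ})+h\,([X_i,Z]v)(\cdot\,e^{hZ}),
\]
with $[X_i,Z]$ a multiple of $T$. The transported kernel is $\|e^{-hZ}\circ\eta^{-1}\circ\xi\circ e^{hZ}\|_{\mathbb{H}^n}^{-Q-sp}$, whose ratio to $\|\eta^{-1}\circ\xi\|_{\mathbb{H}^n}^{-Q-sp}$ is unbounded near the diagonal. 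So the discrepancy cannot be absorbed into a tail estimate. Left differences would commute with the $X_i$ and preserve the kernel, but their quotients converge to right-invariant derivatives. Those are not controlled by $\nabla_Hu$, and Lemmas~\ref{Le22} and~\ref{Le213} do not apply to them.

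The paper does not resolve this either. \eqref{eq44} is written with the unchanged $d\mu$, and \eqref{eq411} applies \eqref{eqA1} as if $\nabla_H\Delta_{Z,h}u=\Delta_{Z,h}\nabla_Hu$. So on this point your plan is no weaker than the paper's proof, but the justification you give for it is false.
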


\begin{proof}
Without loss of generality, assume $\xi_0=0 $. The proof is divided into five steps.

\textbf{Step1. Discrete differentiation of the equation.} Write
$$r = R - 4{h_0},\;d\mu  = d\mu \left( {\xi ,\eta } \right) = \frac{{d\xi d\eta }}{{\left\| {{\eta ^{ - 1}} \circ \xi } \right\|_{{{\mathbb{H}}^n}}^{Q + sp}}},$$
and take $\phi  \in HW_0^{1,p}\left( {{B_R}} \right)$ vanish outside ${B_{\frac{R+r}{2}}}$. Since $u$ is a weak solution of \eqref{eq0}, we have from Definition \ref{De26} that
\begin{equation}\label{eq43}
\int_{{B_R}} {{{\left| {{\nabla _H}u} \right|}^{p - 2}}{\nabla _H}u{\nabla _H}\phi d\xi }  +  \Lambda \int_{{{\mathbb{H}}^n}} {\int_{{{\mathbb{H}}^n}} {{\left({J_p}\left( {u\left( \xi  \right) - u\left( \eta  \right)} \right)\right)\left( {\phi \left( \xi  \right) - \phi \left( \eta  \right)} \right)}}d\mu }  = 0.
\end{equation}
Set $h \in \mathbb{R}\backslash \{0\}$ satisfying $\left| h \right|<h_0$. Taking $\phi  = \phi \left( {\xi {e^{ - hZ}}} \right)$ in \eqref{eq43} and using a change of variables, it yields
\begin{align}\label{eq44}
   & \int_{{B_R}} {{{\left| {{\nabla _H}u\left( {\xi {e^{hZ}}} \right)} \right|}^{p - 2}}{\nabla _H}u\left( {\xi {e^{hZ}}} \right){\nabla _H}\phi d\xi } \nonumber \\
   +&  \Lambda \int_{{{\mathbb{H}}^n}} {\int_{{{\mathbb{H}}^n}} {{{\left(J_p\left( {u\left( {\xi {e^{hZ}}} \right) - u\left( {\eta {e^{hZ}}} \right)} \right)\right)\left( {\phi \left( \xi  \right) - \phi \left( \eta  \right)} \right)}}d\mu } }  = 0.
\end{align}
Performing \eqref{eq44}-\eqref{eq43} and then dividing by $\left| h \right|$ gives
\begin{align}\label{eq45}
  & \int_{{B_R}} {\frac{{{{\left| {{\nabla _H}u\left( {\xi {e^{hZ}}} \right)} \right|}^{p - 2}}{\nabla _H}u\left( {\xi {e^{hZ}}} \right) - {{\left| {{\nabla _H}u} \right|}^{p - 2}}{\nabla _H}u}}{{\left| h \right|}}{\nabla _H}\phi d\xi } \nonumber \\
  + &  \Lambda \int_{{{\mathbb{H}}^n}} {\int_{{{\mathbb{H}}^n}} {\frac{{{J_p}\left( {u\left( {\xi {e^{hZ}}} \right) - u\left( {\eta {e^{hZ}}} \right)} \right) - {J_p}\left( {u\left( \xi  \right) - u\left( \eta  \right)} \right)}}{{\left| h \right|}}\left( {\phi \left( \xi  \right) - \phi \left( \eta  \right)} \right)d\mu } }  = 0
\end{align}
for any $\phi  \in HW_0^{1,p}\left( {{B_R}} \right)$ vanish outside ${B_{\frac{R+r}{2}}}$. Let $\varphi$ be a nonnegetive cut-off function between $B_r$ and ${B_{\frac{R+r}{2}}}$ satisfying
\[\varphi  \equiv 1\;\hbox{on}\;B_r,\;\;\varphi  \equiv 0\;\hbox{on}\;\mathbb{H}^n\backslash {B_{\frac{R+r}{2}}},\;\;\left| {{\nabla _H}\varphi} \right| \le \frac{c}{{R - r}} = \frac{c}{{4{h_0}}},\]
where $c=c(Q)>0$. Let $\alpha  \ge 1,\;\theta  > 0$. By taking the testing function
\[\phi \left( \xi  \right) = {J_{\alpha  + 1}}\left( {\frac{{u\left( {\xi {e^{hZ}}} \right) - u\left( \xi  \right)}}{{{{\left| h \right|}^\theta }}}} \right){\varphi ^p}\left( \xi  \right)\]
in \eqref{eq45}, we obtain
\begin{align}\label{eq46}
   0 =& \int_{{B_R}} {\frac{{{{\left| {{\nabla _H}u\left( {\xi {e^{hZ}}} \right)} \right|}^{p - 2}}{\nabla _H}u\left( {\xi {e^{hZ}}} \right) - {{\left| {{\nabla _H}u} \right|}^{p - 2}}{\nabla _H}u}}{{{{\left| h \right|}^{1 + \theta \alpha }}}}{\nabla _H}\left( {{J_{\alpha  + 1}}\left( {{{u\left( {\xi {e^{hZ}}} \right) - u}}} \right){\varphi ^p}} \right)d\xi } \nonumber \\
   & +  \Lambda \int_{{{\mathbb{H}}^n}} {\int_{{{\mathbb{H}}^n}} {\frac{{{J_p}\left( {u\left( {\xi {e^{hZ}}} \right) - u\left( {\eta {e^{hZ}}} \right)} \right) - {J_p}\left( {u\left( \xi  \right) - u\left( \eta  \right)} \right)}}{{{{\left| h \right|}^{1 + \theta \alpha }}}}} }\nonumber \\
    &\times \left( {{J_{\alpha  + 1}}\left( {{{u\left( {\xi {e^{hZ}}} \right) - u\left( \xi  \right)}}} \right){\varphi ^p}\left( \xi  \right) - {J_{\alpha  + 1}}\left( {{{u\left( {\eta {e^{hZ}}} \right) - u\left( \eta  \right)}}} \right){\varphi ^p}\left( \eta  \right)} \right)d\mu  \nonumber \\
    = :&I+ \Lambda J.
\end{align}

\textbf{Step2. Estimate of the local integral $I$.} Note that
\begin{align}\label{eq47}
   {I_1}: =& \left( {{{\left| {{\nabla _H}u\left( {\xi {e^{hZ}}} \right)} \right|}^{p - 2}}{\nabla _H}u\left( {\xi {e^{hZ}}} \right) - {{\left| {{\nabla _H}u} \right|}^{p - 2}}{\nabla _H}u} \right){\nabla _H}\left( {{J_{\alpha  + 1}}\left( {u\left( {\xi {e^{hZ}}} \right) - u} \right){\varphi ^p}} \right)\nonumber \\
   =& \left( {{{\left| {{\nabla _H}u\left( {\xi {e^{hZ}}} \right)} \right|}^{p - 2}}{\nabla _H}u\left( {\xi {e^{hZ}}} \right) - {{\left| {{\nabla _H}u} \right|}^{p - 2}}{\nabla _H}u} \right){\varphi ^p}{\nabla _H}\left( {{J_{\alpha  + 1}}\left( {u\left( {\xi {e^{hZ}}} \right) - u} \right)} \right) \nonumber \\
   & + \left( {{{\left| {{\nabla _H}u\left( {\xi {e^{hZ}}} \right)} \right|}^{p - 2}}{\nabla _H}u\left( {\xi {e^{hZ}}} \right) - {{\left| {{\nabla _H}u} \right|}^{p - 2}}{\nabla _H}u} \right){J_{\alpha  + 1}}\left( {u\left( {\xi {e^{hZ}}} \right) - u} \right){\nabla _H}\left( {{\varphi ^p}} \right)\nonumber \\
    \ge& \left( {{{\left| {{\nabla _H}u\left( {\xi {e^{hZ}}} \right)} \right|}^{p - 2}}{\nabla _H}u\left( {\xi {e^{hZ}}} \right) - {{\left| {{\nabla _H}u} \right|}^{p - 2}}{\nabla _H}u} \right){\varphi ^p}{\nabla _H}\left( {{J_{\alpha  + 1}}\left( {u\left( {\xi {e^{hZ}}} \right) - u} \right)} \right)\nonumber \\
   & - \left| {{{\left| {{\nabla _H}u\left( {\xi {e^{hZ}}} \right)} \right|}^{p - 2}}{\nabla _H}u\left( {\xi {e^{hZ}}} \right) - {{\left| {{\nabla _H}u} \right|}^{p - 2}}{\nabla _H}u} \right|{\left| {u\left( {\xi {e^{hZ}}} \right) - u} \right|^\alpha }\left| {{\nabla _H}\left( {{\varphi ^p}} \right)} \right|\nonumber \\
   : = &{I_{11}} - {I_{12}}.
\end{align}

\textbf{Estimate of ${I_{11}}$.} Since
\begin{align}\label{eq48}
   {\nabla _H}\left( {{{\left| u \right|}^{p - 2}}u} \right)=& {\left| u \right|^{p - 2}}{\nabla _H}u + u{\nabla _H}\left( {{{\left( {{u^2}} \right)}^{\frac{{p - 2}}{2}}}} \right) \nonumber \\
  = & {\left| u \right|^{p - 2}}{\nabla _H}u + \left( {p - 2} \right){\left| u \right|^{p - 2}}{\nabla _H}u\nonumber \\
   = &\left( {p - 1} \right){\left| u \right|^{p - 2}}{\nabla _H}u
\end{align}
for $p\ge 2$, it deduces
\begin{align*}\label{eq49}
   {I_{11}}& = \left( {{{\left| {{\nabla _H}u\left( {\xi {e^{hZ}}} \right)} \right|}^{p - 2}}{\nabla _H}u\left( {\xi {e^{hZ}}} \right) - {{\left| {{\nabla _H}u} \right|}^{p - 2}}{\nabla _H}u} \right){\varphi ^p}{\nabla _H}\left( {{J_{\alpha  + 1}}\left( {u\left( {\xi {e^{hZ}}} \right) - u} \right)} \right) \nonumber\\
   &  = \alpha \left( {{{\left| {{\nabla _H}u\left( {\xi {e^{hZ}}} \right)} \right|}^{p - 2}}{\nabla _H}u\left( {\xi {e^{hZ}}} \right) - {{\left| {{\nabla _H}u} \right|}^{p - 2}}{\nabla _H}u} \right){\nabla _H}\left( {u\left( {\xi {e^{hZ}}} \right) - u} \right){\left| {u\left( {\xi {e^{hZ}}} \right) - u} \right|^{\alpha  - 1}}{\varphi^p}.
\end{align*}
On the one hand, by virtue of \eqref{eqA2} and $\alpha \ge 1$, we have
\begin{equation}\label{eq410}
{I_{11}} \ge \frac{4}{{{p^2}}}{\left| {{{\left| {{\nabla _H}u\left( {\xi {e^{hZ}}} \right)} \right|}^{\frac{{p - 2}}{2}}}{\nabla _H}u\left( {\xi {e^{hZ}}} \right) - {{\left| {{\nabla _H}u} \right|}^{\frac{{p - 2}}{2}}}{\nabla _H}u} \right|^2}{\left| {u\left( {\xi {e^{hZ}}} \right) - u} \right|^{\alpha  - 1}}{\varphi ^p};
\end{equation}
on the other hand, we get by using \eqref{eqA1} and \eqref{eq48} that
\begin{align}\label{eq411}
   {I_{11}} \ge& \alpha {2^{2 - p}}{\left| {{\nabla _H}\left( {u\left( {\xi {e^{hZ}}} \right) - u} \right)} \right|^p}{\left| {u\left( {\xi {e^{hZ}}} \right) - u} \right|^{\alpha  - 1}}{\varphi^p} \nonumber\\
   =& \alpha {2^{2 - p}}{\left( {\frac{p}{{\alpha  + p - 1}}} \right)^p}{\left| {{\nabla _H}\left( {{{\left| {u\left( {\xi {e^{hZ}}} \right) - u} \right|}^{\frac{{\alpha  - 1}}{p}}}\left( {u\left( {\xi {e^{hZ}}} \right) - u} \right)} \right)} \right|^p}{\varphi ^p} \nonumber\\
\ge &\alpha {2^{2 - p}}{\left( {\frac{p}{{\alpha  + p - 1}}} \right)^p}\{ {2^{ - p}}{\left| {{\nabla _H}\left( {{{\left| {u\left( {\xi {e^{hZ}}} \right) - u} \right|}^{\frac{{\alpha  - 1}}{p}}}\left( {u\left( {\xi {e^{hZ}}} \right) - u} \right)\varphi } \right)} \right|^p}\nonumber\\
& - {\left| {{{\left| {u\left( {\xi {e^{hZ}}} \right) - u} \right|}^{\frac{{\alpha  - 1}}{p}}}\left( {u\left( {\xi {e^{hZ}}} \right) - u} \right)} \right|^p}{\left| {{\nabla _H}\varphi } \right|^p}\} .
\end{align}

\textbf{Estimate of ${I_{12}}$.} By means of $p \ge 2$, \eqref{eqA3}, Young's inequality and \eqref{eq410}, we get
\begin{align}\label{eq412}
   {I_{12}} =& \left| {{{\left| {{\nabla _H}u\left( {\xi {e^{hZ}}} \right)} \right|}^{p - 2}}{\nabla _H}u\left( {\xi {e^{hZ}}} \right) - {{\left| {{\nabla _H}u} \right|}^{p - 2}}{\nabla _H}u} \right|{\left| {u\left( {\xi {e^{hZ}}} \right) - u} \right|^\alpha }\left| {{\nabla _H}\left( {{\varphi ^p}} \right)} \right|\nonumber\\
 \le  &  \left( {p - 1} \right)\left( {{{\left| {{\nabla _H}u\left( {\xi {e^{hZ}}} \right)} \right|}^{\frac{{p - 2}}{2}}} + {{\left| {{\nabla _H}u} \right|}^{\frac{{p - 2}}{2}}}} \right)\nonumber\\
  & \times \left| {{{\left| {{\nabla _H}u\left( {\xi {e^{hZ}}} \right)} \right|}^{\frac{{p - 2}}{2}}}{\nabla _H}u\left( {\xi {e^{hZ}}} \right) - {{\left| {{\nabla _H}u} \right|}^{\frac{{p - 2}}{2}}}{\nabla _H}u} \right|{\left| {u\left( {\xi {e^{hZ}}} \right) - u} \right|^\alpha }2{\varphi ^{\frac{p}{2}}}\left| {{\nabla _H}\left( {{\varphi ^{\frac{p}{2}}}} \right)} \right| \nonumber\\
   = &\left( {\left( {p - 1} \right)\left( {{{\left| {{\nabla _H}u\left( {\xi {e^{hZ}}} \right)} \right|}^{\frac{{p - 2}}{2}}} + {{\left| {{\nabla _H}u} \right|}^{\frac{{p - 2}}{2}}}} \right){{\left| {u\left( {\xi {e^{hZ}}} \right) - u} \right|}^{\frac{{\alpha  + 1}}{2}}}\left| {{\nabla _H}\left( {{\varphi ^{\frac{p}{2}}}} \right)} \right|} \right) \nonumber\\
   & \times \left( {\left| {{{\left| {{\nabla _H}u\left( {\xi {e^{hZ}}} \right)} \right|}^{\frac{{p - 2}}{2}}}{\nabla _H}u\left( {\xi {e^{hZ}}} \right) - {{\left| {{\nabla _H}u} \right|}^{\frac{{p - 2}}{2}}}{\nabla _H}u} \right|{{\left| {u\left( {\xi {e^{hZ}}} \right) - u} \right|}^{\frac{{\alpha  - 1}}{2}}}2{\varphi ^{\frac{p}{2}}}} \right)\nonumber\\
   \le& c\left( {p,\varepsilon } \right){\left( {{{\left| {{\nabla _H}u\left( {\xi {e^{hZ}}} \right)} \right|}^{\frac{{p - 2}}{2}}} + {{\left| {{\nabla _H}u} \right|}^{\frac{{p - 2}}{2}}}} \right)^2}{\left| {u\left( {\xi {e^{hZ}}} \right) - u} \right|^{\alpha  + 1}}{\left| {{\nabla _H}\left( {{\varphi ^{\frac{p}{2}}}} \right)} \right|^2} \nonumber\\
   & + \varepsilon {\left| {{{\left| {{\nabla _H}u\left( {\xi {e^{hZ}}} \right)} \right|}^{\frac{{p - 2}}{2}}}{\nabla _H}u\left( {\xi {e^{hZ}}} \right) - {{\left| {{\nabla _H}u} \right|}^{\frac{{p - 2}}{2}}}{\nabla _H}u} \right|^2}{\left| {u\left( {\xi {e^{hZ}}} \right) - u} \right|^{\alpha  - 1}}{\varphi ^p}\nonumber\\
    \le& c\left( {p,\varepsilon } \right){\left( {{{\left| {{\nabla _H}u\left( {\xi {e^{hZ}}} \right)} \right|}^{\frac{{p - 2}}{2}}} + {{\left| {{\nabla _H}u} \right|}^{\frac{{p - 2}}{2}}}} \right)^2}{\left| {u\left( {\xi {e^{hZ}}} \right) - u} \right|^{\alpha  + 1}}{\left| {{\nabla _H}\left( {{\varphi ^{\frac{p}{2}}}} \right)} \right|^2} + \frac{{\varepsilon {p^2}}}{4}{I_{11}}.
\end{align}
Then, by choosing $\varepsilon  \in \left( {0,\frac{4}{{{p^2}}}} \right)$ small enough and combining \eqref{eq47} with \eqref{eq412}, we have from \eqref{eq411} that
\begin{align}\label{eq413}
   {I_1} \ge& c\left( p \right){I_{11}} - c\left( p \right){\left( {{{\left| {{\nabla _H}u\left( {\xi {e^{hZ}}} \right)} \right|}^{\frac{{p - 2}}{2}}} + {{\left| {{\nabla _H}u} \right|}^{\frac{{p - 2}}{2}}}} \right)^2}{\left| {u\left( {\xi {e^{hZ}}} \right) - u} \right|^{\alpha  + 1}}{\left| {{\nabla _H}\left( {{\varphi ^{\frac{p}{2}}}} \right)} \right|^2} \nonumber\\
   \ge&  c\left( p \right)\alpha {2^{2 - p}}{\left( {\frac{p}{{\alpha  + p - 1}}} \right)^p}\{ {2^{ - p}}{\left| {{\nabla _H}\left( {{{\left| {u\left( {\xi {e^{hZ}}} \right) - u} \right|}^{\frac{{\alpha  - 1}}{p}}}\left( {u\left( {\xi {e^{hZ}}} \right) - u} \right)\varphi } \right)} \right|^p}\nonumber\\
   & - {\left| {{{\left| {u\left( {\xi {e^{hZ}}} \right) - u} \right|}^{\frac{{\alpha  - 1}}{p}}}\left( {u\left( {\xi {e^{hZ}}} \right) - u} \right)} \right|^p}{\left| {{\nabla _H}\varphi } \right|^p}\} \nonumber\\
  & - c\left( p \right){\left( {{{\left| {{\nabla _H}u\left( {\xi {e^{hZ}}} \right)} \right|}^{\frac{{p - 2}}{2}}} + {{\left| {{\nabla _H}u} \right|}^{\frac{{p - 2}}{2}}}} \right)^2}{\left| {u\left( {\xi {e^{hZ}}} \right) - u} \right|^{\alpha  + 1}}{\left| {{\nabla _H}\left( {{\varphi ^{\frac{p}{2}}}} \right)} \right|^2}.
\end{align}
Therefore, it yields
\begin{align}\label{eq414}
I =   & \int_{{B_R}} {\frac{{{I_1}}}{{{{\left| h \right|}^{1 + \theta \alpha }}}}d\xi }  \nonumber\\
  \ge&  c\left( {p,\alpha } \right)\int_{{B_R}} {{{\left| {{\nabla _H}\left( {\frac{{{{\left| {u\left( {\xi {e^{hZ}}} \right) - u} \right|}^{\frac{{\alpha  - 1}}{p}}}\left( {u\left( {\xi {e^{hZ}}} \right) - u} \right)\varphi }}{{{{\left| h \right|}^{\frac{{1 + \theta \alpha }}{p}}}}}} \right)} \right|}^p}d\xi }  \nonumber\\
  & - c\left( {p,\alpha } \right)\int_{{B_R}} {\frac{{{{\left| {{{\left| {u\left( {\xi {e^{hZ}}} \right) - u} \right|}^{\frac{{\alpha  - 1}}{p}}}\left( {u\left( {\xi {e^{hZ}}} \right) - u} \right)} \right|}^p}{{\left| {{\nabla _H}\varphi } \right|}^p}}}{{{{\left| h \right|}^{1 + \theta \alpha }}}}d\xi } \nonumber\\
 & - c\left( p \right)\int_{{B_R}} {\frac{{{{\left( {{{\left| {{\nabla _H}u\left( {\xi {e^{hZ}}} \right)} \right|}^{\frac{{p - 2}}{2}}} + {{\left| {{\nabla _H}u} \right|}^{\frac{{p - 2}}{2}}}} \right)}^2}{{\left| {u\left( {\xi {e^{hZ}}} \right) - u} \right|}^{\alpha  + 1}}{{\left| {{\nabla _H}\left( {{\varphi ^{\frac{p}{2}}}} \right)} \right|}^2}}}{{{{\left| h \right|}^{1 + \theta \alpha }}}}d\xi }  \nonumber\\
  : =& c\left( {p,\alpha } \right)\int_{{B_R}} {{{\left| {{\nabla _H}\left( {\frac{{{{\left| {u\left( {\xi {e^{hZ}}} \right) - u} \right|}^{\frac{{\alpha  - 1}}{p}}}\left( {u\left( {\xi {e^{hZ}}} \right) - u} \right)\varphi }}{{{{\left| h \right|}^{\frac{{1 + \theta \alpha }}{p}}}}}} \right)} \right|}^p}d\xi }  \nonumber\\
  &- c\left( {p,\alpha } \right){I_{13}} - c\left( p \right){I_{14}}.
\end{align}

\textbf{Estimate of ${I_{13}}$.} Note that
\begin{align}\label{eq4150}
   {I_{13}} = & \int_{{B_R}} {\frac{{{{\left| {{{\left| {u\left( {\xi {e^{hZ}}} \right) - u} \right|}^{\frac{{\alpha  - 1}}{p}}}\left( {u\left( {\xi {e^{hZ}}} \right) - u} \right)} \right|}^p}{{\left| {{\nabla _H}\varphi } \right|}^p}}}{{{{\left| h \right|}^{1 + \theta \alpha }}}}d\xi } \nonumber \\
   =&  \int_{{B_R}} {\frac{{{{\left| {{\Delta _{Z,h}}u} \right|}^{\alpha  + p - 1}}{{\left| {{\nabla _H}\varphi} \right|}^p}}}{{{{\left| h \right|}^{1 + \theta \alpha }}}}d\xi }\nonumber \\
    \le& {\left( {\frac{c}{{4{h_0}}}} \right)^p}\int_{{B_R}} {\frac{{{{\left| {{\Delta _{Z,h}}u} \right|}^{\alpha  + p - 1}}}}{{{{\left| h \right|}^{1 + \theta \alpha }}}}d\xi } ,
\end{align}
where we have used the properties of $\varphi$. When $p=2$, it follows from \eqref{eq4150} and ${\left\| u \right\|_{{L^\infty }\left( {{B_1}} \right)}} \le 1$ that
\begin{align}\label{eq4151}
   {I_{13}}& \le {\left( {\frac{c}{{4{h_0}}}} \right)^2}\int_{{B_R}} {\frac{{{{\left| {{\Delta _{Z,h}}u} \right|}^{\alpha  + 1}}}}{{{{\left| h \right|}^{1 + \theta \alpha }}}}d\xi } \nonumber \\
  \le &  {\left( {\frac{c}{{4{h_0}}}} \right)^2}\int_{{B_R}} {\frac{{2{{\left\| u \right\|}_{{L^\infty }\left( {{B_1}} \right)}}{{\left| {{\Delta _{Z,h}}u} \right|}^\alpha }}}{{{{\left| h \right|}^{1 + \theta \alpha }}}}d\xi } \nonumber \\
    \le& 2{\left( {\frac{c}{{4{h_0}}}} \right)^2}\int_{{B_R}} {\frac{{{{\left| {{\Delta _{Z,h}}u} \right|}^\alpha }}}{{{{\left| h \right|}^{1 + \theta \alpha }}}}d\xi }.
\end{align}
When $p>2$, by applying \eqref{eq4150}, Young's inequality with exponents $\frac{q}{p-2}$ and $\frac{q}{q-p+2}$ and ${\left\| u \right\|_{{L^\infty }\left( {{B_1}} \right)}} \le 1$, we get
\begin{align}\label{eq4152}
   {I_{13}}& \le {\left( {\frac{c}{{4{h_0}}}} \right)^p}\int_{{B_R}} {\frac{{{{\left| {{\Delta _{Z,h}}u} \right|}^{\alpha  + p - 1}}}}{{{{\left| h \right|}^{1 + \theta \alpha }}}}d\xi }  \nonumber \\
   &\le {\left( {\frac{c}{{4{h_0}}}} \right)^p}\left( {\int_{{B_R}} {\frac{{{{\left| {{\Delta _{Z,h}}u} \right|}^{\frac{{\alpha q}}{{q - p + 2}}}}}}{{{{\left| h \right|}^{\frac{{\left( {1 + \theta \alpha } \right)q}}{{q - p + 2}}}}}}d\xi }  + \int_{{B_R}} {{{\left| {{\Delta _{Z,h}}u} \right|}^{\frac{{\left( {p - 1} \right)q}}{{p - 2}}}}d\xi } } \right)\nonumber \\
    &\le c\left( {Q,{h_0},p,q} \right)\left( {\int_{{B_R}} {\frac{{{{\left| {{\Delta _{Z,h}}u} \right|}^{\frac{{\alpha q}}{{q - p + 2}}}}}}{{{{\left| h \right|}^{\frac{{\left( {1 + \theta \alpha } \right)q}}{{q - p + 2}}}}}}d\xi }  + 1} \right).
\end{align}
Therefore, it deduces by combining \eqref{eq4151} and \eqref{eq4152} that
\begin{equation}\label{eq415}
 {I_{13}} \le c\left( {Q,{h_0},p,q} \right)\left( {\int_{{B_R}} {\frac{{{{\left| {{\Delta _{Z,h}}u} \right|}^{\frac{{\alpha q}}{{q - p + 2}}}}}}{{{{\left| h \right|}^{\frac{{\left( {1 + \theta \alpha } \right)q}}{{q - p + 2}}}}}}d\xi }  + 1} \right).
\end{equation}

\textbf{Estimate of ${I_{14}}$.} By straightforward calculations, we obtain
\begin{align}\label{eq416}
   {I_{14}}&  = \int_{{B_R}} {\frac{{{{\left( {{{\left| {{\nabla _H}u\left( {\xi {e^{hZ}}} \right)} \right|}^{\frac{{p - 2}}{2}}} + {{\left| {{\nabla _H}u} \right|}^{\frac{{p - 2}}{2}}}} \right)}^2}{{\left| {u\left( {\xi {e^{hZ}}} \right) - u} \right|}^{\alpha  + 1}}{{\left| {{\nabla _H}\left( {{\varphi ^{\frac{p}{2}}}} \right)} \right|}^2}}}{{{{\left| h \right|}^{1 + \theta \alpha }}}}d\xi } \nonumber \\
   &  \le 4\cdot \frac{p^2}{4}\int_{{B_R}} {\frac{{\left( {{{\left| {{\nabla _H}u\left( {\xi {e^{hZ}}} \right)} \right|}^{p - 2}} + {{\left| {{\nabla _H}u} \right|}^{p - 2}}} \right){{\left| {{\Delta _{Z,h}}u} \right|}^{\alpha  + 1}}{{\left| {{\nabla _H}\varphi } \right|}^2}}}{{{{\left| h \right|}^{1 + \theta \alpha }}}}d\xi }\nonumber \\
    &= {p^2}\int_{{B_R}} {\frac{{{{\left| {{\nabla _H}u\left( {\xi {e^{hZ}}} \right)} \right|}^{p - 2}}{{\left| {{\Delta _{Z,h}}u} \right|}^{\alpha  + 1}}{{\left| {{\nabla _H}\varphi } \right|}^2}}}{{{{\left| h \right|}^{1 + \theta \alpha }}}}d\xi }  + {p^2}\int_{{B_R}} {\frac{{{{\left| {{\nabla _H}u} \right|}^{p - 2}}{{\left| {{\Delta _{Z,h}}u} \right|}^{\alpha  + 1}}{{\left| {{\nabla _H}\varphi } \right|}^2}}}{{{{\left| h \right|}^{1 + \theta \alpha }}}}d\xi }\nonumber \\
    &: = {p^2}\left( {{I_{15}} + {I_{16}}} \right).
\end{align}

\textbf{Estimate of ${I_{15}}$ and ${I_{16}}$.} When $p=2$, it gets that from ${\left\| u \right\|_{{L^\infty }\left( {{B_1}} \right)}} \le 1$ and the properties of $\varphi$
\begin{align}\label{eq417}
  {I_{15}}  & = \int_{{B_R}} {\frac{{{{\left| {{\Delta _{Z,h}}u} \right|}^{\alpha  + 1}}{{\left| {{\nabla _H}\varphi } \right|}^2}}}{{{{\left| h \right|}^{1 + \theta \alpha }}}}d\xi }  \le 2{\left\| u \right\|_{{L^\infty }\left( {{B_1}} \right)}}\int_{{B_R}} {\frac{{{{\left| {{\Delta _{Z,h}}u} \right|}^\alpha }{{\left| {{\nabla _H}\varphi } \right|}^2}}}{{{{\left| h \right|}^{1 + \theta \alpha }}}}d\xi }  \nonumber\\
   &  \le 2{\left( {\frac{c}{{4{h_0}}}} \right)^2}\int_{{B_R}} {\frac{{{{\left| {{\Delta _{Z,h}}u} \right|}^\alpha }}}{{{{\left| h \right|}^{1 + \theta \alpha }}}}d\xi } .
\end{align}
When $p>2$, by applying Young's inequality with exponents $\frac{q}{p-2}$ and $\frac{q}{q-p+2}$ and ${\left\| u \right\|_{{L^\infty }\left( {{B_1}} \right)}} \le 1$, we have
\begin{align}\label{eq418}
   {I_{15}}& = \int_{{B_R}} {\frac{{{{\left| {{\nabla _H}u\left( {\xi {e^{hZ}}} \right)} \right|}^{p - 2}}{{\left| {{\Delta _{Z,h}}u} \right|}^{\alpha  + 1}}{{\left| {{\nabla _H}\varphi } \right|}^2}}}{{{{\left| h \right|}^{1 + \theta \alpha }}}}d\xi }  \nonumber\\
   &  \le c(p,q)\int_{{B_R}} {{{\left| {{\nabla _H}u\left( {\xi {e^{hZ}}} \right)} \right|}^q}d\xi }  + c(p,q)\int_{{B_R}} {{{\left( {\frac{{{{\left| {{\Delta _{Z,h}}u} \right|}^{\alpha  + 1}}{{\left| {{\nabla _H}\varphi } \right|}^2}}}{{{{\left| h \right|}^{1 + \theta \alpha }}}}} \right)}^{\frac{q}{{q - p + 2}}}}d\xi }\nonumber\\
   & \le c(p,q)\int_{{B_R}} {{{\left| {{\nabla _H}u\left( {\xi {e^{hZ}}} \right)} \right|}^q}d\xi }  + c(p,q){\left( {\frac{C}{{4{h_0}}}} \right)^{\frac{{2q}}{{q - p + 2}}}}{\left( {2{{\left\| u \right\|}_{{L^\infty }\left( {{B_1}} \right)}}} \right)^{\frac{q}{{q - p + 2}}}}\int_{{B_R}} {\frac{{{{\left| {{\Delta _{Z,h}}u} \right|}^{\frac{{\alpha q}}{{q - p + 2}}}}}}{{{{\left| h \right|}^{\frac{{\left( {1 + \theta \alpha } \right)q}}{{q - p + 2}}}}}}d\xi } \nonumber\\
   & \le c(p,q)\int_{{B_R}} {{{\left| {{\nabla _H}u\left( {\xi {e^{hZ}}} \right)} \right|}^q}d\xi }  + c\left( {Q,{h_0},p,q} \right)\int_{{B_R}} {\frac{{{{\left| {{\Delta _{Z,h}}u} \right|}^{\frac{{\alpha q}}{{q - p + 2}}}}}}{{{{\left| h \right|}^{\frac{{\left( {1 + \theta \alpha } \right)q}}{{q - p + 2}}}}}}d\xi } .
\end{align}
Therefore, by combining \eqref{eq417} and \eqref{eq418}, it yields that for any $p \ge2$
\begin{equation}\label{eq419}
 {I_{15}} \le c(p,q)\int_{{B_R}} {{{\left| {{\nabla _H}u\left( {\xi {e^{hZ}}} \right)} \right|}^q}d\xi }  + c\left( {Q,{h_0},p,q} \right)\int_{{B_R}} {\frac{{{{\left| {{\Delta _{Z,h}}u} \right|}^{\frac{{\alpha q}}{{q - p + 2}}}}}}{{{{\left| h \right|}^{\frac{{\left( {1 + \theta \alpha } \right)q}}{{q - p + 2}}}}}}d\xi } .
\end{equation}
Similarly, it follows
\begin{equation}\label{eq420}
{I_{16}} \le c(p,q)\int_{{B_R}} {{{\left| {{\nabla _H}u} \right|}^q}d\xi }  + c\left( {Q,{h_0},p,q} \right)\int_{{B_R}} {\frac{{{{\left| {{\Delta _{Z,h}}u} \right|}^{\frac{{\alpha q}}{{q - p + 2}}}}}}{{{{\left| h \right|}^{\frac{{\left( {1 + \theta \alpha } \right)q}}{{q - p + 2}}}}}}d\xi } .
\end{equation}
Substituting \eqref{eq419} and \eqref{eq420} into \eqref{eq416} gives
\begin{equation}\label{eq421}
 {I_{14}} \le c(p,q)\int_{{B_R}} {{{\left| {{\nabla _H}u\left( {\xi {e^{hZ}}} \right)} \right|}^q}d\xi }  + c(p,q)\int_{{B_R}} {{{\left| {{\nabla _H}u} \right|}^q}d\xi }  + c\left( {Q,{h_0},p,q} \right)\int_{{B_R}} {\frac{{{{\left| {{\Delta _{Z,h}}u} \right|}^{\frac{{\alpha q}}{{q - p + 2}}}}}}{{{{\left| h \right|}^{\frac{{\left( {1 + \theta \alpha } \right)q}}{{q - p + 2}}}}}}d\xi } .
\end{equation}
Thus, Plugging \eqref{eq415} and \eqref{eq421} into \eqref{eq414} yields
\begin{align}\label{eq422}
   I \ge& c(p,\alpha)\int_{{B_R}} {{{\left| {{\nabla _H}\left( {\frac{{{{\left| {u\left( {\xi {e^{hZ}}} \right) - u} \right|}^{\frac{{\alpha  - 1}}{p}}}\left( {u\left( {\xi {e^{hZ}}} \right) - u} \right)\varphi }}{{{{\left| h \right|}^{\frac{{1 + \theta \alpha }}{p}}}}}} \right)} \right|}^p}d\xi }  - c(p,q,\alpha)\int_{{B_R}} {{{\left| {{\nabla _H}u\left( {\xi {e^{hZ}}} \right)} \right|}^q}d\xi }  \nonumber\\
   &  - c(p,q,\alpha)\int_{{B_R}} {{{\left| {{\nabla _H}u} \right|}^q}d\xi }  - c\left( {Q,{h_0},p,q} \right)\int_{{B_R}} {\frac{{{{\left| {{\Delta _{Z,h}}u} \right|}^{\frac{{\alpha q}}{{q - p + 2}}}}}}{{{{\left| h \right|}^{\frac{{\left( {1 + \theta \alpha } \right)q}}{{q - p + 2}}}}}}d\xi }  - c\left( {Q,{h_0},p,q} \right).
\end{align}

\textbf{Step3. Estimate of the nonlocal integral $J$.} By spliting ${{\mathbb{H}}^n} \times {{\mathbb{H}}^n}$ into
\[\left( {{B_R} \times {B_R}} \right) \cup \left( {{{\mathbb{H}}^n}\backslash {B_R} \times {B_R}} \right) \cup \left( {{B_R} \times {{\mathbb{H}}^n}\backslash {B_R}} \right) \cup \left( {{{\mathbb{H}}^n}\backslash {B_R} \times {{\mathbb{H}}^n}\backslash {B_R}} \right),\]
and using $\varphi \equiv 0$ on ${{\mathbb{H}}^n}\backslash {B_{\frac{R+r}{2}}}$, we obtain
\begin{align}\label{eq423}
  J = & \int_{{{\mathbb{H}}^n}} {\int_{{{\mathbb{H}}^n}} {\frac{{{J_p}\left( {u\left( {\xi {e^{hZ}}} \right) - u\left( {\eta {e^{hZ}}} \right)} \right) - {J_p}\left( {u\left( \xi  \right) - u\left( \eta  \right)} \right)}}{{{{\left| h \right|}^{1 + \theta \alpha }}}}} } \nonumber\\
   &  \times \left( {{J_{\alpha  + 1}}\left( {u\left( {\xi {e^{hZ}}} \right) - u\left( \xi  \right)} \right){\varphi ^p}\left( \xi  \right) - {J_{\alpha  + 1}}\left( {u\left( {\eta {e^{hZ}}} \right) - u\left( \eta  \right)} \right){\varphi ^p}\left( \eta  \right)} \right)d\mu \nonumber\\
    = &\int_{{B_R}} {\int_{{B_R}} {\frac{{{J_p}\left( {u\left( {\xi {e^{hZ}}} \right) - u\left( {\eta {e^{hZ}}} \right)} \right) - {J_p}\left( {u\left( \xi  \right) - u\left( \eta  \right)} \right)}}{{{{\left| h \right|}^{1 + \theta \alpha }}}}} }\nonumber\\
   &\times \left( {{J_{\alpha  + 1}}\left( {{\Delta _{Z,h}}u\left( \xi  \right)} \right){\varphi ^p}\left( \xi  \right) - {J_{\alpha  + 1}}\left( {{\Delta _{Z,h}}\left( \eta  \right)} \right){\varphi ^p}\left( \eta  \right)} \right)d\mu\nonumber\\
   & + \int_{{{\mathbb{H}}^n}\backslash {B_R}} {\int_{{B_{\frac{{R + r}}{2}}}} {\frac{{{J_p}\left( {u\left( {\xi {e^{hZ}}} \right) - u\left( {\eta {e^{hZ}}} \right)} \right) - {J_p}\left( {u\left( \xi  \right) - u\left( \eta  \right)} \right)}}{{{{\left| h \right|}^{1 + \theta \alpha }}}}{J_{\alpha  + 1}}\left( {{\Delta _{Z,h}}u\left( \xi  \right)} \right){\varphi ^p}\left( \xi  \right)d\mu } }\nonumber\\
   & - \int_{{B_{\frac{{R + r}}{2}}}} {\int_{{{\mathbb{H}}^n}\backslash {B_R}} {\frac{{{J_p}\left( {u\left( {\xi {e^{hZ}}} \right) - u\left( {\eta {e^{hZ}}} \right)} \right) - {J_p}\left( {u\left( \xi  \right) - u\left( \eta  \right)} \right)}}{{{{\left| h \right|}^{1 + \theta \alpha }}}}{J_{\alpha  + 1}}\left( {{\Delta _{Z,h}}\left( \eta  \right)} \right){\varphi ^p}\left( \eta  \right)d\mu } } \nonumber\\
   : = &{J_1} + {J_2} - {J_3}.
\end{align}

\textbf{Estimate of ${J_{1}}$.} We start by observing that
\begin{align*}
   & {J_{\alpha  + 1}}\left( {{\Delta _{Z,h}}u\left( \xi  \right)} \right){\varphi ^p}\left( \xi  \right) - {J_{\alpha  + 1}}\left( {{\Delta _{Z,h}}\left( \eta  \right)} \right){\varphi ^p}\left( \eta  \right) \\
   =& \frac{{{J_{\alpha  + 1}}\left( {{\Delta _{Z,h}}u\left( \xi  \right)} \right) - {J_{\alpha  + 1}}\left( {{\Delta _{Z,h}}\left( \eta  \right)} \right)}}{2}\left( {{\varphi ^p}\left( \xi  \right) + {\varphi ^p}\left( \eta  \right)} \right)\\
   & + \frac{{{J_{\alpha  + 1}}\left( {{\Delta _{Z,h}}u\left( \xi  \right)} \right) + {J_{\alpha  + 1}}\left( {{\Delta _{Z,h}}\left( \eta  \right)} \right)}}{2}\left( {{\varphi ^p}\left( \xi  \right) - {\varphi ^p}\left( \eta  \right)} \right).
\end{align*}
Then
\begin{align}\label{eq450}
J'_1:=&\left( {{J_p}\left( {u\left( {\xi {e^{hZ}}} \right) - u\left( {\eta {e^{hZ}}} \right)} \right) - {J_p}\left( {u\left( \xi  \right) - u\left( \eta  \right)} \right)} \right)\left( {{J_{\alpha  + 1}}\left( {{\Delta _{Z,h}}u\left( \xi  \right)} \right){\varphi ^p}\left( \xi  \right) - {J_{\alpha  + 1}}\left( {{\Delta _{Z,h}}\left( \eta  \right)} \right){\varphi ^p}\left( \eta  \right)} \right)\nonumber\\
  \ge &\left( {{J_p}\left( {u\left( {\xi {e^{hZ}}} \right) - u\left( {\eta {e^{hZ}}} \right)} \right) - {J_p}\left( {u\left( \xi  \right) - u\left( \eta  \right)} \right)} \right)\nonumber\\
  &\times\left( {{J_{\alpha  + 1}}\left( {{\Delta _{Z,h}}u\left( \xi  \right)} \right) - {J_{\alpha  + 1}}\left( {{\Delta _{Z,h}}\left( \eta  \right)} \right)} \right)\frac{{\left( {{\varphi ^p}\left( \xi  \right) + {\varphi ^p}\left( \eta  \right)} \right)}}{2}\nonumber\\
  & - \left| {{J_p}\left( {u\left( {\xi {e^{hZ}}} \right) - u\left( {\eta {e^{hZ}}} \right)} \right) - {J_p}\left( {u\left( \xi  \right) - u\left( \eta  \right)} \right)} \right|\left( {{{\left| {{\Delta _{Z,h}}u\left( \xi  \right)} \right|}^\alpha } + {{\left| {{\Delta _{Z,h}}\left( \eta  \right)} \right|}^\alpha }} \right)\left| {\frac{{{\varphi ^p}\left( \xi  \right) - {\varphi ^p}\left( \eta  \right)}}{2}} \right|\nonumber\\
  : = &{{J'}_{11}} - {{J'}_{12}}.
 \end{align}

\textbf{Estimate of ${J'_{11}}$.} We get from \eqref{eqA5} that
\begin{equation}\label{eq453}
 {{J'}_{11}} \ge \frac{1}{{c\left( {p,\alpha } \right)}}{\left| {{{\left| {{\Delta _{Z,h}}u\left( \xi  \right)} \right|}^{\frac{{\alpha  - 1}}{p}}}\left( {{\Delta _{Z,h}}u\left( \xi  \right)} \right) - {{\left| {{\Delta _{Z,h}}\left( \eta  \right)} \right|}^{\frac{{\alpha  - 1}}{p}}}\left( {{\Delta _{Z,h}}\left( \eta  \right)} \right)} \right|^p}\left( {{\varphi ^p}\left( \xi  \right) + {\varphi ^p}\left( \eta  \right)} \right).
\end{equation}

\textbf{Estimate of ${J'_{12}}$.} By using \eqref{eqA4}, Young's inequality and \eqref{eqA6}, it yields
\begin{align}\label{eq451}
   {J'_{12}} =& \left| {{J_p}\left( {u\left( {\xi {e^{hZ}}} \right) - u\left( {\eta {e^{hZ}}} \right)} \right) - {J_p}\left( {u\left( \xi  \right) - u\left( \eta  \right)} \right)} \right| \nonumber\\
   &  \times \left( {{{\left| {{\Delta _{Z,h}}u\left( \xi  \right)} \right|}^\alpha } + {{\left| {{\Delta _{Z,h}}\left( \eta  \right)} \right|}^\alpha }} \right)\left| {\frac{{{\varphi ^p}\left( \xi  \right) - {\varphi ^p}\left( \eta  \right)}}{2}} \right|\nonumber\\
    \le& \frac{{2\left( {p - 1} \right)}}{p}\left( {{{\left| {u\left( {\xi {e^{hZ}}} \right) - u\left( {\eta {e^{hZ}}} \right)} \right|}^{\frac{{p - 2}}{2}}} + {{\left| {u\left( \xi  \right) - u\left( \eta  \right)} \right|}^{\frac{{p - 2}}{2}}}} \right)\nonumber\\
    & \times \left| {{{\left| {u\left( {\xi {e^{hZ}}} \right) - u\left( {\eta {e^{hZ}}} \right)} \right|}^{\frac{{p - 2}}{2}}}\left( {u\left( {\xi {e^{hZ}}} \right) - u\left( {\eta {e^{hZ}}} \right)} \right) - {{\left| {u\left( \xi  \right) - u\left( \eta  \right)} \right|}^{\frac{{p - 2}}{2}}}\left( {u\left( \xi  \right) - u\left( \eta  \right)} \right)} \right|\nonumber\\
    & \times \left( {{{\left| {{\Delta _{Z,h}}u\left( \xi  \right)} \right|}^\alpha } + {{\left| {{\Delta _{Z,h}}\left( \eta  \right)} \right|}^\alpha }} \right)\frac{{{\varphi ^{\frac{p}{2}}}\left( \xi  \right) + {\varphi ^{\frac{p}{2}}}\left( \eta  \right)}}{2}\left| {{\varphi ^{\frac{p}{2}}}\left( \xi  \right) - {\varphi ^{\frac{p}{2}}}\left( \eta  \right)} \right|\nonumber\\
     \le &\frac{c}{\varepsilon }{\left( {{{\left| {u\left( {\xi {e^{hZ}}} \right) - u\left( {\eta {e^{hZ}}} \right)} \right|}^{\frac{{p - 2}}{2}}} + {{\left| {u\left( \xi  \right) - u\left( \eta  \right)} \right|}^{\frac{{p - 2}}{2}}}} \right)^2}\nonumber\\
     & \times \left( {{{\left| {{\Delta _{Z,h}}u\left( \xi  \right)} \right|}^{\alpha  + 1}} + {{\left| {{\Delta _{Z,h}}\left( \eta  \right)} \right|}^{\alpha  + 1}}} \right){\left| {{\varphi ^{\frac{p}{2}}}\left( \xi  \right) - {\varphi ^{\frac{p}{2}}}\left( \eta  \right)} \right|^2}\nonumber\\
     & + c\varepsilon {\left| {{{\left| {u\left( {\xi {e^{hZ}}} \right) - u\left( {\eta {e^{hZ}}} \right)} \right|}^{\frac{{p - 2}}{2}}}\left( {u\left( {\xi {e^{hZ}}} \right) - u\left( {\eta {e^{hZ}}} \right)} \right) - {{\left| {u\left( \xi  \right) - u\left( \eta  \right)} \right|}^{\frac{{p - 2}}{2}}}\left( {u\left( \xi  \right) - u\left( \eta  \right)} \right)} \right|^2}\nonumber\\
     & \times \left( {{{\left| {{\Delta _{Z,h}}u\left( \xi  \right)} \right|}^{\alpha  - 1}} + {{\left| {{\Delta _{Z,h}}\left( \eta  \right)} \right|}^{\alpha  - 1}}} \right)\left( {{\varphi ^p}\left( \xi  \right) + {\varphi ^p}\left( \eta  \right)} \right)\nonumber\\
      \le& \frac{c}{\varepsilon }{\left( {{{\left| {u\left( {\xi {e^{hZ}}} \right) - u\left( {\eta {e^{hZ}}} \right)} \right|}^{\frac{{p - 2}}{2}}} + {{\left| {u\left( \xi  \right) - u\left( \eta  \right)} \right|}^{\frac{{p - 2}}{2}}}} \right)^2}\nonumber\\
      & \times \left( {{{\left| {{\Delta _{Z,h}}u\left( \xi  \right)} \right|}^{\alpha  + 1}} + {{\left| {{\Delta _{Z,h}}\left( \eta  \right)} \right|}^{\alpha  + 1}}} \right){\left| {{\varphi ^{\frac{p}{2}}}\left( \xi  \right) - {\varphi ^{\frac{p}{2}}}\left( \eta  \right)} \right|^2}\nonumber\\
     & + c\varepsilon \left( {{J_p}\left( {u\left( {\xi {e^{hZ}}} \right) - u\left( {\eta {e^{hZ}}} \right)} \right) - {J_p}\left( {u\left( \xi  \right) - u\left( \eta  \right)} \right)} \right)\nonumber\\
     & \times \left( {{J_{\alpha  + 1}}\left( {{\Delta _{Z,h}}u\left( \xi  \right)} \right) - {J_{\alpha  + 1}}\left( {{\Delta _{Z,h}}\left( \eta  \right)} \right)} \right)\left( {{\varphi ^p}\left( \xi  \right) + {\varphi ^p}\left( \eta  \right)} \right)\nonumber\\
     =& \frac{c}{\varepsilon }{\left( {{{\left| {u\left( {\xi {e^{hZ}}} \right) - u\left( {\eta {e^{hZ}}} \right)} \right|}^{\frac{{p - 2}}{2}}} + {{\left| {u\left( \xi  \right) - u\left( \eta  \right)} \right|}^{\frac{{p - 2}}{2}}}} \right)^2}\nonumber\\
      & \times \left( {{{\left| {{\Delta _{Z,h}}u\left( \xi  \right)} \right|}^{\alpha  + 1}} + {{\left| {{\Delta _{Z,h}}\left( \eta  \right)} \right|}^{\alpha  + 1}}} \right){\left| {{\varphi ^{\frac{p}{2}}}\left( \xi  \right) - {\varphi ^{\frac{p}{2}}}\left( \eta  \right)} \right|^2} + 2c \varepsilon {J'_{11}},
\end{align}
where $c=c(p)>0$ and $\varepsilon$ is arbitrary. Substituting \eqref{eq451} into \eqref{eq450} and taking $\varepsilon$ sufficiently small, then we obtain from \eqref{eq453}
\begin{align}\label{eq452}
  J'_1 \ge& \frac{1}{c(p)}{{J'}_{11}} - c(p){\left( {{{\left| {u\left( {\xi {e^{hZ}}} \right) - u\left( {\eta {e^{hZ}}} \right)} \right|}^{\frac{{p - 2}}{2}}} + {{\left| {u\left( \xi  \right) - u\left( \eta  \right)} \right|}^{\frac{{p - 2}}{2}}}} \right)^2}\nonumber\\
      & \times \left( {{{\left| {{\Delta _{Z,h}}u\left( \xi  \right)} \right|}^{\alpha  + 1}} + {{\left| {{\Delta _{Z,h}}\left( \eta  \right)} \right|}^{\alpha  + 1}}} \right){\left| {{\varphi ^{\frac{p}{2}}}\left( \xi  \right) - {\varphi ^{\frac{p}{2}}}\left( \eta  \right)} \right|^2}\nonumber\\
   \ge&c(p,\alpha){\left| {{{\left| {{\Delta _{Z,h}}u\left( \xi  \right)} \right|}^{\frac{{\alpha  - 1}}{p}}}\left( {{\Delta _{Z,h}}u\left( \xi  \right)} \right) - {{\left| {{\Delta _{Z,h}}\left( \eta  \right)} \right|}^{\frac{{\alpha  - 1}}{p}}}\left( {{\Delta _{Z,h}}\left( \eta  \right)} \right)} \right|^p}\left( {{\varphi ^p}\left( \xi  \right) + {\varphi ^p}\left( \eta  \right)} \right)\nonumber\\
  & - c(p){\left( {{{\left| {u\left( {\xi {e^{hZ}}} \right) - u\left( {\eta {e^{hZ}}} \right)} \right|}^{\frac{{p - 2}}{2}}} + {{\left| {u\left( \xi  \right) - u\left( \eta  \right)} \right|}^{\frac{{p - 2}}{2}}}} \right)^2}\nonumber\\
      & \times \left( {{{\left| {{\Delta _{Z,h}}u\left( \xi  \right)} \right|}^{\alpha  + 1}} + {{\left| {{\Delta _{Z,h}}\left( \eta  \right)} \right|}^{\alpha  + 1}}} \right){\left| {{\varphi ^{\frac{p}{2}}}\left( \xi  \right) - {\varphi ^{\frac{p}{2}}}\left( \eta  \right)} \right|^2}.
\end{align}
Therefore, it gets
\begin{align}\label{eq454}
   {J_1} =& \int_{{B_R}} {\int_{{B_R}} {\frac{{{{J'}_1}}}{{{{\left| h \right|}^{1 + \theta \alpha }}}}d\mu } } \nonumber \\
   \ge& c\left( {p,\alpha } \right)\int_{{B_R}} {\int_{{B_R}} {{{\left| {\frac{{{{\left| {{\Delta _{Z,h}}u\left( \xi  \right)} \right|}^{\frac{{\alpha  - 1}}{p}}}\left( {{\Delta _{Z,h}}u\left( \xi  \right)} \right)}}{{{{\left| h \right|}^{\frac{{1 + \theta \alpha }}{p}}}}} - \frac{{{{\left| {{\Delta _{Z,h}}\left( \eta  \right)} \right|}^{\frac{{\alpha  - 1}}{p}}}\left( {{\Delta _{Z,h}}\left( \eta  \right)} \right)}}{{{{\left| h \right|}^{\frac{{1 + \theta \alpha }}{p}}}}}} \right|}^p}\left( {{\varphi ^p}\left( \xi  \right) + {\varphi ^p}\left( \eta  \right)} \right)d\mu } } \nonumber\\
   & - c\left( p \right)\int_{{B_R}} {\int_{{B_R}} {{{\left( {{{\left| {u\left( {\xi {e^{hZ}}} \right) - u\left( {\eta {e^{hZ}}} \right)} \right|}^{\frac{{p - 2}}{2}}} + {{\left| {u\left( \xi  \right) - u\left( \eta  \right)} \right|}^{\frac{{p - 2}}{2}}}} \right)}^2}{{\left| {{\varphi ^{\frac{p}{2}}}\left( \xi  \right) - {\varphi ^{\frac{p}{2}}}\left( \eta  \right)} \right|}^2}} }\nonumber \\
   & \times \frac{{{{\left| {{\Delta _{Z,h}}u\left( \xi  \right)} \right|}^{\alpha  + 1}} + {{\left| {{\Delta _{Z,h}}\left( \eta  \right)} \right|}^{\alpha  + 1}}}}{{{{\left| h \right|}^{1 + \theta \alpha }}}}d\mu .
\end{align}
If we set
\[A = \frac{{{{\left| {{\Delta _{Z,h}}u\left( \xi  \right)} \right|}^{\frac{{\alpha  - 1}}{p}}}\left( {{\Delta _{Z,h}}u\left( \xi  \right)} \right)}}{{{{\left| h \right|}^{\frac{{1 + \theta \alpha }}{p}}}}},\;B = \frac{{{{\left| {{\Delta _{Z,h}}\left( \eta  \right)} \right|}^{\frac{{\alpha  - 1}}{p}}}\left( {{\Delta _{Z,h}}\left( \eta  \right)} \right)}}{{{{\left| h \right|}^{\frac{{1 + \theta \alpha }}{p}}}}}\]
and then use the convexity of $\tau  \mapsto {\tau ^p}$, we have
\begin{align*}
   & {\left| {A\varphi \left( \xi  \right) - B\varphi \left( \eta  \right)} \right|^p} \\
  = &  {\left| {\left( {A - B} \right)\frac{{\varphi \left( \xi  \right) + \varphi \left( \eta  \right)}}{2} + \left( {A + B} \right)\frac{{\varphi \left( \xi  \right) - \varphi \left( \eta  \right)}}{2}} \right|^p}\\
   \le& \frac{1}{2}{\left| {A - B} \right|^p}{\left| {\varphi \left( \xi  \right) + \varphi \left( \eta  \right)} \right|^p} + \frac{1}{2}{\left| {A + B} \right|^p}{\left| {\varphi \left( \xi  \right) - \varphi \left( \eta  \right)} \right|^p}\\
    \le& {2^{p - 2}}{\left| {A - B} \right|^p}\left( {{\varphi ^p}\left( \xi  \right) + {\varphi ^p}\left( \eta  \right)} \right) + {2^{p - 2}}\left( {{{\left| A \right|}^p} + {{\left| B \right|}^p}} \right){\left| {\varphi \left( \xi  \right) - \varphi \left( \eta  \right)} \right|^p}.
\end{align*}
From the above formula and \eqref{eq454}, we have
\begin{align}\label{eq455}
    {J_1} \ge& c\left( {p,\alpha } \right)\int_{{B_R}} {\int_{{B_R}} {{{\left| {\frac{{{{\left| {{\Delta _{Z,h}}u\left( \xi  \right)} \right|}^{\frac{{\alpha  - 1}}{p}}}\left( {{\Delta _{Z,h}}u\left( \xi  \right)} \right)}}{{{{\left| h \right|}^{\frac{{1 + \theta \alpha }}{p}}}}}\varphi \left( \xi  \right) - \frac{{{{\left| {{\Delta _{Z,h}}\left( \eta  \right)} \right|}^{\frac{{\alpha  - 1}}{p}}}\left( {{\Delta _{Z,h}}\left( \eta  \right)} \right)}}{{{{\left| h \right|}^{\frac{{1 + \theta \alpha }}{p}}}}}\varphi \left( \eta  \right)} \right|}^p}d\mu } }  \nonumber \\
   &- c\left( p \right)\int_{{B_R}} {\int_{{B_R}} {{{\left( {{{\left| {u\left( {\xi {e^{hZ}}} \right) - u\left( {\eta {e^{hZ}}} \right)} \right|}^{\frac{{p - 2}}{2}}} + {{\left| {u\left( \xi  \right) - u\left( \eta  \right)} \right|}^{\frac{{p - 2}}{2}}}} \right)}^2}{{\left| {{\varphi ^{\frac{p}{2}}}\left( \xi  \right) - {\varphi ^{\frac{p}{2}}}\left( \eta  \right)} \right|}^2}} } \nonumber \\
   & \times \frac{{{{\left| {{\Delta _{Z,h}}u\left( \xi  \right)} \right|}^{\alpha  + 1}} + {{\left| {{\Delta _{Z,h}}\left( \eta  \right)} \right|}^{\alpha  + 1}}}}{{{{\left| h \right|}^{1 + \theta \alpha }}}}d\mu  \nonumber \\
  &  - c\left( {p,\alpha } \right)\int_{{B_R}} {\int_{{B_R}} {\left( {\frac{{{{\left| {{\Delta _{Z,h}}u\left( \xi  \right)} \right|}^{\alpha  - 1 + p}}}}{{{{\left| h \right|}^{1 + \theta \alpha }}}} + \frac{{{{\left| {{\Delta _{Z,h}}\left( \eta  \right)} \right|}^{\alpha  - 1 + p}}}}{{{{\left| h \right|}^{1 + \theta \alpha }}}}} \right){{\left| {\varphi \left( \xi  \right) - \varphi \left( \eta  \right)} \right|}^p}d\mu } } \nonumber \\
   : =& c\left( {p,\alpha } \right)\left[ {\frac{{{{\left| {{\Delta _{Z,h}}} \right|}^{\frac{{\alpha  - 1}}{p}}}{\Delta _{Z,h}}}}{{{{\left| h \right|}^{\frac{{1 + \theta \alpha }}{p}}}}}\varphi } \right]_{H{W^{s,p}}\left( {{B_R}} \right)}^p - c\left( {p } \right){J_{11}} - c\left( {p,\alpha } \right){J_{12}},
\end{align}
where
\begin{align*}
   J_{11}=& \int_{{B_R}} {\int_{{B_R}} {{{\left( {{{\left| {u\left( {\xi {e^{hZ}}} \right) - u\left( {\eta {e^{hZ}}} \right)} \right|}^{\frac{{p - 2}}{2}}} + {{\left| {u\left( \xi  \right) - u\left( \eta  \right)} \right|}^{\frac{{p - 2}}{2}}}} \right)}^2}{{\left| {{\varphi ^{\frac{p}{2}}}\left( \xi  \right) - {\varphi ^{\frac{p}{2}}}\left( \eta  \right)} \right|}^2}} } \nonumber \\
   & \times \frac{{{{\left| {{\Delta _{Z,h}}u\left( \xi  \right)} \right|}^{\alpha  + 1}} + {{\left| {{\Delta _{Z,h}}\left( \eta  \right)} \right|}^{\alpha  + 1}}}}{{{{\left| h \right|}^{1 + \theta \alpha }}}}d\mu
\end{align*}
and
\begin{equation*}
{J_{12}}=\int_{{B_R}} {\int_{{B_R}} {\left( {\frac{{{{\left| {{\Delta _{Z,h}}u\left( \xi  \right)} \right|}^{\alpha  - 1 + p}}}}{{{{\left| h \right|}^{1 + \theta \alpha }}}} + \frac{{{{\left| {{\Delta _{Z,h}}\left( \eta  \right)} \right|}^{\alpha  - 1 + p}}}}{{{{\left| h \right|}^{1 + \theta \alpha }}}}} \right){{\left| {\varphi \left( \xi  \right) - \varphi \left( \eta  \right)} \right|}^p}d\mu } }.
\end{equation*}

\textbf{Estimate of ${J_{11}}$.} We first estimate
\[\int_{{B_R}} {\int_{{B_R}} {{{\left| {u\left( \xi  \right) - u\left( \eta  \right)} \right|}^{p - 2}}{{\left| {{\varphi ^{\frac{p}{2}}}\left( \xi  \right) - {\varphi ^{\frac{p}{2}}}\left( \eta  \right)} \right|}^2}\frac{{{{\left| {{\Delta _{Z,h}}u\left( \xi  \right)} \right|}^{\alpha  + 1}}}}{{{{\left| h \right|}^{1 + \theta \alpha }}}}d\mu } } .\]
The other terms of ${J_{11}}$ can be treat similarly. Consider that $\varphi$ is Lipschitz and $p \ge 2$, it yields
\begin{align}\label{eq456}
   & \int_{{B_R}} {\int_{{B_R}} {{{\left| {u\left( \xi  \right) - u\left( \eta  \right)} \right|}^{p - 2}}{{\left| {{\varphi ^{\frac{p}{2}}}\left( \xi  \right) - {\varphi ^{\frac{p}{2}}}\left( \eta  \right)} \right|}^2}\frac{{{{\left| {{\Delta _{Z,h}}u\left( \xi  \right)} \right|}^{\alpha  + 1}}}}{{{{\left| h \right|}^{1 + \theta \alpha }}}}d\mu } } \nonumber \\
   \le&  \frac{c}{{h_0^2}}\int_{{B_R}} {\int_{{B_R}} {\frac{{{{\left| {u\left( \xi  \right) - u\left( \eta  \right)} \right|}^{p - 2}}}}{{\left\| {{\eta ^{ - 1}} \circ \xi } \right\|_{{{\mathbb{H}}^n}}^{Q + sp - 2}}}\frac{{{{\left| {{\Delta _{Z,h}}u\left( \xi  \right)} \right|}^{\alpha  + 1}}}}{{{{\left| h \right|}^{1 + \theta \alpha }}}}d\xi d\eta } } .
\end{align}

For $p=2$, the last term of $\eqref{eq456}$ reduces to
\begin{align}\label{eq457}
   & \int_{{B_R}} {\int_{{B_R}} {\frac{1}{{\left\| {{\eta ^{ - 1}} \circ \xi } \right\|_{{{\mathbb{H}}^n}}^{Q + 2s - 2}}}\frac{{{{\left| {{\Delta _{Z,h}}u\left( \xi  \right)} \right|}^{\alpha  + 1}}}}{{{{\left| h \right|}^{1 + \theta \alpha }}}}d\xi d\eta } }  \nonumber\\
   =&  \int_{{B_R}} {\left( {\int_{{B_R}} {\frac{1}{{\left\| {{\eta ^{ - 1}} \circ \xi } \right\|_{{{\mathbb{H}}^n}}^{Q + 2s - 2}}}d\eta } } \right)} \frac{{{{\left| {{\Delta _{Z,h}}u\left( \xi  \right)} \right|}^{\alpha  + 1}}}}{{{{\left| h \right|}^{1 + \theta \alpha }}}}d\xi \nonumber\\
   \le& c\int_{{B_R}} {\frac{{{{\left| {{\Delta _{Z,h}}u\left( \xi  \right)} \right|}^{\alpha  + 1}}}}{{{{\left| h \right|}^{1 + \theta \alpha }}}}d\xi }  \le c{\left\| u \right\|_{{L^\infty }\left( {{B_1}} \right)}}\int_{{B_R}} {\frac{{{{\left| {{\Delta _{Z,h}}u\left( \xi  \right)} \right|}^\alpha }}}{{{{\left| h \right|}^{1 + \theta \alpha }}}}d\xi }  \nonumber\\
   \le& c\int_{{B_R}} {\frac{{{{\left| {{\Delta _{Z,h}}u\left( \xi  \right)} \right|}^\alpha }}}{{{{\left| h \right|}^{1 + \theta \alpha }}}}d\xi },
\end{align}
where we used ${\left\| u \right\|_{{L^\infty }\left( {{B_1}} \right)}} \le 1$, and $c=c(Q,s)$.

For $p >2$, we take
\[\varepsilon  = \min \left\{ {\frac{{p - 2}}{2},\frac{1}{s} - 1} \right\} > 0,\]
then
\begin{equation}\label{eq459}
\frac{{q\left( {2 - 2s - \varepsilon s} \right)}}{{q - p + 2}} > 0.
\end{equation}
Thus, by Young's inequality with exponents $\frac{q}{{p - 2}}$ and $\frac{q}{{q - p + 2}}$, we get
\begin{align}\label{eq458}
   & \frac{c}{{h_0^2}}\int_{{B_R}} {\int_{{B_R}} {\frac{{{{\left| {u\left( \xi  \right) - u\left( \eta  \right)} \right|}^{p - 2}}}}{{\left\| {{\eta ^{ - 1}} \circ \xi } \right\|_{{{\mathbb{H}}^n}}^{Q + sp - 2}}}\frac{{{{\left| {{\Delta _{Z,h}}u\left( \xi  \right)} \right|}^{\alpha  + 1}}}}{{{{\left| h \right|}^{1 + \theta \alpha }}}}d\xi d\eta } }  \nonumber\\
   \le&  c\int_{{B_R}} {\int_{{B_R}} {\frac{{{{\left| {u\left( \xi  \right) - u\left( \eta  \right)} \right|}^q}}}{{\left\| {{\eta ^{ - 1}} \circ \xi } \right\|_{{{\mathbb{H}}^n}}^{Q + \frac{{s\left( {p - 2 - \varepsilon } \right)}}{{p - 2}}q}}}d\xi d\eta } }\nonumber \\
   & + {\left( {\frac{c}{{{h_0}}}} \right)^{\frac{{2q}}{{q - p + 2}}}}\int_{{B_R}} {\left( {\int_{{B_R}} {\left\| {{\eta ^{ - 1}} \circ \xi } \right\|_{{{\mathbb{H}}^n}}^{\frac{{q\left( {2 - 2s - \varepsilon s} \right)}}{{q - p + 2}} - Q}d\eta } } \right)} \frac{{{{\left| {{\Delta _{Z,h}}u\left( \xi  \right)} \right|}^{\frac{{q\left( {\alpha  + 1} \right)}}{{q - p + 2}}}}}}{{{{\left| h \right|}^{\frac{{q\left( {1 + \theta \alpha } \right)}}{{q - p + 2}}}}}}d\xi \nonumber\\
   \le &c\left[ u \right]_{H{W^{\frac{{s\left( {p - 2 - \varepsilon } \right)}}{{p - 2}},q}}\left( {{B_R}} \right)}^q + c{\left( {\frac{1}{{{h_0}}}} \right)^{\frac{{2q}}{{q - p + 2}}}}\frac{{q - p + 2}}{{q\left( {2 - 2s - \varepsilon s} \right)}}{R^{\frac{{q\left( {2 - 2s - \varepsilon s} \right)}}{{q - p + 2}}}}\int_{{B_R}} {\frac{{{{\left| {{\Delta _{Z,h}}u\left( \xi  \right)} \right|}^{\frac{{q\left( {\alpha  + 1} \right)}}{{q - p + 2}}}}}}{{{{\left| h \right|}^{\frac{{q\left( {1 + \theta \alpha } \right)}}{{q - p + 2}}}}}}d\xi } \nonumber\\
    \le &c\left[ u \right]_{H{W^{\frac{{s\left( {p - 2 - \varepsilon } \right)}}{{p - 2}},q}}\left( {{B_{R + {h_0}}}} \right)}^q + c\left\| u \right\|_{{L^\infty }\left( {{B_1}} \right)}^{\frac{q}{{q - p + 2}}}\int_{{B_R}} {\frac{{{{\left| {{\Delta _{Z,h}}u\left( \xi  \right)} \right|}^{\frac{{\alpha q}}{{q - p + 2}}}}}}{{{{\left| h \right|}^{\frac{{q\left( {1 + \theta \alpha } \right)}}{{q - p + 2}}}}}}d\xi }\nonumber\\
     \le &c\left[ u \right]_{H{W^{\frac{{s\left( {p - 2 - \varepsilon } \right)}}{{p - 2}},q}}\left( {{B_{R + {h_0}}}} \right)}^q + c\int_{{B_R}} {\frac{{{{\left| {{\Delta _{Z,h}}u\left( \xi  \right)} \right|}^{\frac{{\alpha q}}{{q - p + 2}}}}}}{{{{\left| h \right|}^{\frac{{q\left( {1 + \theta \alpha } \right)}}{{q - p + 2}}}}}}d\xi } ,
\end{align}
where $c=c(Q,h_0,p,s,q)>0$ and we also used $q\ge p$ and \eqref{eq459}. Next, by using Lemma \ref{Le26} with
\[\alpha  = \frac{{s\left( {p - 2 - \varepsilon } \right)}}{{p - 2}},\;\beta  = s,\;r = R + 4{h_0},\;\rho  = R + {h_0},\]
and
\[R + 4{h_0} \le 1,\;{\left\| u \right\|_{{L^\infty }\left( {{B_1}} \right)}} \le 1,\]
we have
\begin{equation}\label{eq473}
\left[ u \right]_{H{W^{\frac{{s\left( {p - 2 - \varepsilon } \right)}}{{p - 2}},q}}\left( {{B_{R + {h_0}}}} \right)}^q \le c\left( {\mathop {\sup }\limits_{0 < \left| h \right| < {h_0}} \left\| {\frac{{\Delta _{Z,h}^2u}}{{{{\left| h \right|}^s}}}} \right\|_{{L^q}\left( {{B_{R + 4{h_0}}}} \right)}^q + 1} \right).
\end{equation}
Therefore, for $p \ge 2$, it follows
\begin{equation}\label{eq466}
\left| {{J_{11}}} \right| \le c\left( {\int_{{B_R}} {\frac{{{{\left| {{\Delta _{Z,h}}u\left( \xi  \right)} \right|}^{\frac{{\alpha q}}{{q - p + 2}}}}}}{{{{\left| h \right|}^{\frac{{q\left( {1 + \theta \alpha } \right)}}{{q - p + 2}}}}}}d\xi }  + \mathop {\sup }\limits_{0 < \left| h \right| < {h_0}} \left\| {\frac{{\Delta _{Z,h}^2u}}{{{{\left| h \right|}^s}}}} \right\|_{{L^q}\left( {{B_{R + 4{h_0}}}} \right)}^q + 1} \right),
\end{equation}
where $c=c(Q,h_0,p,s,q)>0$.

\textbf{Estimate of ${J_{12}}$.} By means of Lipschitz continuous of $\varphi $, the Young inequality with exponents $\frac{q}{{p - 2}}$ and $\frac{q}{{q - p + 2}}$ and ${\left\| u \right\|_{{L^\infty }\left( {{B_1}} \right)}} \le 1$, we deduce
\begin{align*}
   {J_{12}}& = \int_{{B_R}} {\int_{{B_R}} {\left( {\frac{{{{\left| {{\Delta _{Z,h}}u\left( \xi  \right)} \right|}^{\alpha  - 1 + p}}}}{{{{\left| h \right|}^{1 + \theta \alpha }}}} + \frac{{{{\left| {{\Delta _{Z,h}}\left( \eta  \right)} \right|}^{\alpha  - 1 + p}}}}{{{{\left| h \right|}^{1 + \theta \alpha }}}}} \right){{\left| {\varphi \left( \xi  \right) - \varphi \left( \eta  \right)} \right|}^p}d\mu } }  \\
   &  = 2\int_{{B_R}} {\int_{{B_R}} {\frac{{{{\left| {{\Delta _{Z,h}}u\left( \xi  \right)} \right|}^{\alpha  - 1 + p}}}}{{{{\left| h \right|}^{1 + \theta \alpha }}}}\frac{{{{\left| {\varphi \left( \xi  \right) - \varphi \left( \eta  \right)} \right|}^p}}}{{\left\| {{\eta ^{ - 1}} \circ \xi } \right\|_{{{\mathbb{H}}^n}}^{Q + sp}}}d\xi d\eta } } \\
  & \le c\int_{{B_R}} {\int_{{B_R}} {\frac{{{{\left| {{\Delta _{Z,h}}u\left( \xi  \right)} \right|}^{\alpha  - 1 + p}}}}{{{{\left| h \right|}^{1 + \theta \alpha }}}}\frac{1}{{\left\| {{\eta ^{ - 1}} \circ \xi } \right\|_{{{\mathbb{H}}^n}}^{Q + \left( {s - 1} \right)p}}}d\xi d\eta } } \\
 & \le c\int_{{B_R}} {\frac{{{{\left| {{\Delta _{Z,h}}u\left( \xi  \right)} \right|}^{\alpha  - 1 + p}}}}{{{{\left| h \right|}^{1 + \theta \alpha }}}}d\xi } \\
 & \le c\left( {\int_{{B_R}} {\frac{{{{\left| {{\Delta _{Z,h}}u\left( \xi  \right)} \right|}^{\alpha \frac{q}{{q - p + 2}}}}}}{{{{\left| h \right|}^{\left( {1 + \theta \alpha } \right)\frac{q}{{q - p + 2}}}}}}d\xi  + \int_{{B_R}} {{{\left| {{\Delta _{Z,h}}u\left( \xi  \right)} \right|}^{\left( {p - 1} \right)\frac{q}{{p - 2}}}}d\xi } } } \right)\\
 & \le c\left( {\int_{{B_R}} {\frac{{{{\left| {{\Delta _{Z,h}}u\left( \xi  \right)} \right|}^{\alpha \frac{q}{{q - p + 2}}}}}}{{{{\left| h \right|}^{\left( {1 + \theta \alpha } \right)\frac{q}{{q - p + 2}}}}}}d\xi  + 1} } \right),
\end{align*}
i.e.
\begin{equation}\label{eq467}
\left| {{J_{12}}} \right| \le c\left( {\int_{{B_R}} {\frac{{{{\left| {{\Delta _{Z,h}}u\left( \xi  \right)} \right|}^{\alpha \frac{q}{{q - p + 2}}}}}}{{{{\left| h \right|}^{\left( {1 + \theta \alpha } \right)\frac{q}{{q - p + 2}}}}}}d\xi  + 1} } \right),
\end{equation}
where $c=c(Q,h_0,p,s,q)>0$.

Therefore, substituting \eqref{eq466} and \eqref{eq467} into \eqref{eq455}, we obtain
\begin{align}\label{eq468}
 {J_1} \ge& c\left( {p,\alpha } \right)\left[ {\frac{{{{\left| {{\Delta _{Z,h}}} \right|}^{\frac{{\alpha  - 1}}{p}}}{\Delta _{Z,h}}}}{{{{\left| h \right|}^{\frac{{1 + \theta \alpha }}{p}}}}}\varphi } \right]_{H{W^{s,p}}\left( {{B_R}} \right)}^p \nonumber\\
 &- c\left( {\int_{{B_R}} {\frac{{{{\left| {{\Delta _{Z,h}}u\left( \xi  \right)} \right|}^{\frac{{\alpha q}}{{q - p + 2}}}}}}{{{{\left| h \right|}^{\frac{{q\left( {1 + \theta \alpha } \right)}}{{q - p + 2}}}}}}d\xi }  + \mathop {\sup }\limits_{0 < \left| h \right| < {h_0}} \left\| {\frac{{\Delta _{Z,h}^2u}}{{{{\left| h \right|}^s}}}} \right\|_{{L^q}\left( {{B_{R + 4{h_0}}}} \right)}^q + 1} \right),
\end{align}
where $c=c(Q,h_0,p,s,q)>0$.

\textbf{Estimate of ${J_{2}}$ and ${J_{3}}$.} Because both nonlocal ${J_{2}}$ and ${J_{3}}$ can be treated in the same way, we only estimate ${J_{2}}$. It gets from the boundedness of $u$ that
\begin{align*}\label{eq469}
   & \left| {\left( {{J_p}\left( {u\left( {\xi {e^{hZ}}} \right) - u\left( {\eta {e^{hZ}}} \right)} \right) - {J_p}\left( {u\left( \xi  \right) - u\left( \eta  \right)} \right)} \right){J_{\alpha  + 1}}\left( {{\Delta _{Z,h}}u\left( \xi  \right)} \right)} \right| \nonumber\\
   \le& c(p)\left( {1 + {{\left| {u\left( {\eta {e^{hZ}}} \right)} \right|}^{p - 1}} + {{\left| {u\left( \eta  \right)} \right|}^{p - 1}}} \right){\left| {{\Delta _{Z,h}}u\left( \xi  \right)} \right|^\alpha }.
\end{align*}
For $\xi  \in {B_{\frac{{R + r}}{2}}},$ we have ${B_{\frac{{R - r}}{2}}}\left( \xi  \right) \subset {B_R}$ and thus
\[\int_{{{\mathbb{H}}^n}\backslash {B_R}} {\frac{1}{{\left\| {{\eta ^{ - 1}} \circ \xi } \right\|_{{{\mathbb{H}}^n}}^{Q + sp}}}d\eta }  \le \int_{{{\mathbb{H}}^n}\backslash {B_{\frac{{R - r}}{2}}}\left( \xi  \right)} {\frac{1}{{\left\| {{\eta ^{ - 1}} \circ \xi } \right\|_{{{\mathbb{H}}^n}}^{Q + sp}}}d\eta }  \le c\left( {Q,{h_0},p,s} \right),\]
by recalling $R - r = 4{h_0}$. For $\xi  \in {B_{\frac{{R + r}}{2}}}$, by using Lemma \ref{Le29} and Lemma \ref{Le210}, we have
\begin{align}\label{eq481}
   & \int_{{{\mathbb{H}}^n}\backslash {B_R}} {\frac{{{{\left| {u\left( \eta  \right)} \right|}^{p - 1}}}}{{\left\| {{\eta ^{ - 1}} \circ \xi } \right\|_{{{\mathbb{H}}^n}}^{Q + sp}}}d\eta }  \le {\left( {\frac{{2R}}{{R - r}}} \right)^{Q + sp}}\int_{{{\mathbb{H}}^n}\backslash {B_R}} {\frac{{{{\left| {u\left( \eta  \right)} \right|}^{p - 1}}}}{{\left\| \eta  \right\|_{{{\mathbb{H}}^n}}^{Q + sp}}}d\eta }\nonumber  \\
   \le& {\left( {\frac{{2R}}{{R - r}}} \right)^{Q + sp}}\int_{{{\mathbb{H}}^n}\backslash {B_1}} {\frac{{{{\left| {u\left( \eta  \right)} \right|}^{p - 1}}}}{{\left\| \eta  \right\|_{{{\mathbb{H}}^n}}^{Q + sp}}}d\eta }  + {\left( {\frac{{2R}}{{R - r}}} \right)^{Q + sp}}{R^{ - Q-sp}}\int_{{B_1}} {{{\left| {u\left( \eta  \right)} \right|}^{p - 1}}d\eta } \nonumber\\
    \le& c\left( {Q,{h_0},p,s} \right),
\end{align}
where in the last estimate we have used the bounds assumed on $u$ and $4h_0<R<1$. The term involving ${u\left( {\eta {e^{hZ}}} \right)}$ can be estimated similarly. Recall that $\varphi  = 0$ outside ${B_{\frac{{R + r}}{2}}}$, we use the Young inequality to get
\begin{align}\label{eq470}
   \left| {{J_2}} \right| + \left| {{J_3}} \right|& \le c\left( {Q,{h_0},p,s} \right)\int_{{B_{\frac{{R + r}}{2}}}} {\frac{{{{\left| {{\Delta _{Z,h}}u} \right|}^\alpha }}}{{{{\left| h \right|}^{1 + \theta \alpha }}}}d\xi } \nonumber \\
   &  \le c\left( {Q,{h_0},p,q,s} \right)\left( {1 + \int_{{B_R}} {{{\left( {\frac{{\left| {{\Delta _{Z,h}}u} \right|}}{{{{\left| h \right|}^{\frac{{1 + \theta \alpha }}{\alpha }}}}}} \right)}^{\frac{{\alpha q}}{{q - p + 2}}}}d\xi } } \right).
\end{align}

Therefore, substituting \eqref{eq468} and \eqref{eq470} into \eqref{eq423}, we get
\begin{align}\label{eq471}
  J \ge &c\left( {p,\alpha } \right)\left[ {\frac{{{{\left| {{\Delta _{Z,h}}} \right|}^{\frac{{\alpha  - 1}}{p}}}{\Delta _{Z,h}}}}{{{{\left| h \right|}^{\frac{{1 + \theta \alpha }}{p}}}}}\varphi } \right]_{H{W^{s,p}}\left( {{B_R}} \right)}^p \nonumber\\
  &- c\left( {\int_{{B_R}} {{{\left( {\frac{{\left| {{\Delta _{Z,h}}u} \right|}}{{{{\left| h \right|}^{\frac{{1 + \theta \alpha }}{\alpha }}}}}} \right)}^{\frac{{\alpha q}}{{q - p + 2}}}}d\xi }  + \mathop {\sup }\limits_{0 < \left| h \right| < {h_0}} \left\| {\frac{{\Delta _{Z,h}^2u}}{{{{\left| h \right|}^s}}}} \right\|_{{L^q}\left( {{B_{R + 4{h_0}}}} \right)}^q + 1} \right),
\end{align}
where $c=c(Q,h_0,p,s,q,\alpha)>0$.

\textbf{Step 4: Going back to the equation.} Inserting the estimates \eqref{eq422} and \eqref{eq471} in \eqref{eq46}, then we use \eqref{eq473} to get
\begin{align}\label{eq472}
   & \int_{{B_R}} {{{\left| {{\nabla _H}\left( {\frac{{{{\left| {u\left( {\xi {e^{hZ}}} \right) - u} \right|}^{\frac{{\alpha  - 1}}{p}}}\left( {u\left( {\xi {e^{hZ}}} \right) - u} \right)\varphi }}{{{{\left| h \right|}^{\frac{{1 + \theta \alpha }}{p}}}}}} \right)} \right|}^p}d\xi } +\left[ {\frac{{{{\left| {{\Delta _{Z,h}}} \right|}^{\frac{{\alpha  - 1}}{p}}}{\Delta _{Z,h}}}}{{{{\left| h \right|}^{\frac{{1 + \theta \alpha }}{p}}}}}\varphi } \right]_{H{W^{s,p}}\left( {{B_R}} \right)}^p \nonumber \\
 \le  &  c\left( {\int_{{B_R}} {{{\left| {{\nabla _H}u\left( {\xi {e^{hZ}}} \right)} \right|}^q}d\xi }  + \int_{{B_R}} {{{\left| {{\nabla _H}u} \right|}^q}d\xi }  + \int_{{B_R}} {{{\left( {\frac{{\left| {{\Delta _{Z,h}}u} \right|}}{{{{\left| h \right|}^{\frac{{1 + \theta \alpha }}{\alpha }}}}}} \right)}^{\frac{{\alpha q}}{{q - p + 2}}}}d\xi }  + 1} \right)\nonumber\\
 & + c\Lambda \left( {\int_{{B_R}} {{{\left( {\frac{{\left| {{\Delta _{Z,h}}u} \right|}}{{{{\left| h \right|}^{\frac{{1 + \theta \alpha }}{\alpha }}}}}} \right)}^{\frac{{\alpha q}}{{q - p + 2}}}}d\xi }  + \mathop {\sup }\limits_{0 < \left| h \right| < {h_0}} \left\| {\frac{{\Delta _{Z,h}^2u}}{{{{\left| h \right|}^s}}}} \right\|_{{L^q}\left( {{B_{R + 4{h_0}}}} \right)}^q + 1} \right)\nonumber\\
  \le  &  c\left( {\int_{{B_R}} {{{\left| {{\nabla _H}u\left( {\xi {e^{hZ}}} \right)} \right|}^q}d\xi }  + \int_{{B_R}} {{{\left| {{\nabla _H}u} \right|}^q}d\xi }  + \int_{{B_R}} {{{\left( {\frac{{\left| {{\Delta _{Z,h}}u} \right|}}{{{{\left| h \right|}^{\frac{{1 + \theta \alpha }}{\alpha }}}}}} \right)}^{\frac{{\alpha q}}{{q - p + 2}}}}d\xi }  + 1} \right)\nonumber\\
 & + c\Lambda \left( {\int_{{B_R}} {{{\left( {\frac{{\left| {{\Delta _{Z,h}}u} \right|}}{{{{\left| h \right|}^{\frac{{1 + \theta \alpha }}{\alpha }}}}}} \right)}^{\frac{{\alpha q}}{{q - p + 2}}}}d\xi }  + \int_{{B_{R + {4h_0}}}} {{{\left| {{\nabla _H}u} \right|}^q}d\xi } + 1} \right)\nonumber\\
 \le &c(1+\Lambda) \left( {\int_{{B_R}} {{{\left( {\frac{{\left| {{\Delta _{Z,h}}u} \right|}}{{{{\left| h \right|}^{\frac{{1 + \theta \alpha }}{\alpha }}}}}} \right)}^{\frac{{\alpha q}}{{q - p + 2}}}}d\xi }  + \int_{{B_{R + {4h_0}}}} {{{\left| {{\nabla _H}u} \right|}^q}d\xi } + 1} \right),
\end{align}
where $c=c(Q,h_0,p,s,q,\alpha)>0$ and we also have used
\[\mathop {\sup }\limits_{0 < \left| h \right| < {h_0}} \left\| {\frac{{\Delta _{Z,h}^2u}}{{{{\left| h \right|}^s}}}} \right\|_{{L^q}\left( {{B_{R + 4{h_0}}}} \right)}^q \le h_0^{\left( {1 - s} \right)q}\mathop {\sup }\limits_{0 < \left| h \right| < {h_0}} \left\| {\frac{{\Delta _{Z,h}^2u}}{{\left| h \right|}}} \right\|_{{L^q}\left( {{B_{R + 4{h_0}}}} \right)}^q \le c\left( {{h_0},q} \right)\int_{{B_{R + 4{h_0}}}} {{{\left| {{\nabla _H}u} \right|}^q}d\xi } \]
from Lemma \ref{Le22}.

For $0 < \left| {h'} \right|,\left| h \right| <  {{h_0}} $ we have
\begin{align*}
   &  \varphi {\Delta _{Z,h'}}\left( {{{\left| {{\Delta _{Z,h}}u} \right|}^{\frac{{\alpha  - 1}}{p}}}\left( {{\Delta _{Z,h}}u} \right)} \right)\\
  = & {\Delta _{Z,h'}}\left( {\varphi {{\left| {{\Delta _{Z,h}}u} \right|}^{\frac{{\alpha  - 1}}{p}}}\left( {{\Delta _{Z,h}}u} \right)} \right) - \left( {{\Delta _{Z,h'}}\varphi } \right)\left( {{{\left| {{\Delta _{Z,h}}u} \right|}^{\frac{{\alpha  - 1}}{p}}}\left( {{\Delta _{Z,h}}u} \right)} \right)\left( {\xi  {e^{h'Z}}} \right),
\end{align*}
so it yields from the properties of $\varphi$ and the above formula that
\begin{align}\label{eq474}
   & \left\| {\frac{{{\Delta _{Z,h'}}{\Delta _{Z,h}}u}}{{{{\left| {h'} \right|}^{\frac{{p}}{{\alpha  - 1 + p}}}}{{\left| h \right|}^{\frac{{1 + \theta \alpha }}{{\alpha  - 1 + p}}}}}}} \right\|_{{L^{\alpha  - 1 + p}}\left( {{B_r}} \right)}^{\alpha  - 1 + p}\le c\left\| {\frac{{{\Delta _{Z,h'}}\left( {{{\left| {{\Delta _{Z,h}}u} \right|}^{\frac{{\alpha  - 1}}{p}}}\left( {{\Delta _{Z,h}}u} \right)} \right)}}{{{{\left| {h'} \right|}}{{\left| h \right|}^{\frac{{1 + \theta \alpha }}{p}}}}}} \right\|_{{L^p}\left( {{B_r}} \right)}^p \nonumber\\
  \le & c\left\| {\varphi \frac{{{\Delta _{Z,h'}}\left( {{{\left| {{\Delta _{Z,h}}u} \right|}^{\frac{{\alpha  - 1}}{p}}}\left( {{\Delta _{Z,h}}u} \right)} \right)}}{{{{\left| {h'} \right|}}{{\left| h \right|}^{\frac{{1 + \theta \alpha }}{p}}}}}} \right\|_{{L^p}\left( {{{\mathbb{H}}^n}} \right)}^p\nonumber\\
  \le & c\left\| {\frac{{{\Delta _{Z,h'}}\left( {{{\left| {{\Delta _{Z,h}}u} \right|}^{\frac{{\alpha  - 1}}{p}}}\left( {{\Delta _{Z,h}}u} \right)\varphi} \right)}}{{{{\left| {h'} \right|}}{{\left| h \right|}^{\frac{{1 + \theta \alpha }}{p}}}}}} \right\|_{{L^p}\left( {{{\mathbb{H}}^n}} \right)}^p \nonumber\\
   &+ c\left\| {\frac{{\left( {{\Delta _{Z,h'}}\varphi } \right)\left( {{{\left| {{\Delta _{Z,h}}u} \right|}^{\frac{{\alpha  - 1}}{p}}}\left( {{\Delta _{Z,h}}u} \right)} \right)\left( {\xi   {e^{h'Z}}} \right)}}{{{{\left| {h'} \right|}}{{\left| h \right|}^{\frac{{1 + \theta \alpha }}{p}}}}}} \right\|_{{L^p}\left( {{{\mathbb{H}}^n}} \right)}^p,
\end{align}
where $c=c(p,\alpha)>0$. For the first term in \eqref{eq474}, we apply the Lemma \ref{Le22} and the properties of $\varphi$ to get
\begin{equation}\label{eq475}
\mathop {\sup }\limits_{\left| {h'} \right| > 0} \left\| {\frac{{{\Delta _{Z,h'}}\left( { {{\left| {{\Delta _{Z,h}}u} \right|}^{\frac{{\alpha  - 1}}{p}}}\left( {{\Delta _{Z,h}}u} \right)\varphi} \right)}}{{\left| {h'} \right|{{\left| h \right|}^{\frac{{1 + \theta \alpha }}{p}}}}}} \right\|_{{L^p}\left( {{{\mathbb{H}}^n}} \right)}^p \le c\int_{{B_R}} {{{\left| {{\nabla _H}\left( {\frac{{{{\left| {{\Delta _{Z,h}}u} \right|}^{\frac{{\alpha  - 1}}{p}}}\left( {{\Delta _{Z,h}}u} \right)\varphi }}{{{{\left| h \right|}^{\frac{{1 + \theta \alpha }}{p}}}}}} \right)} \right|}^p}d\xi } ,
\end{equation}
where $c=c(Q,h_0,p)>0$. For the second term in \eqref{eq474}, by means of the properties of $\varphi$, the boundedness of $u$ and Young's inequality with exponents $\frac{q}{{p - 2}}$ and $\frac{q}{{q - p + 2}}$, we derive
\begin{align}\label{eq476}
   & \left\| {\frac{{\left( {{\Delta _{Z,h'}}\varphi } \right)\left( {{{\left| {{\Delta _{Z,h}}u} \right|}^{\frac{{\alpha  - 1}}{p}}}\left( {{\Delta _{Z,h}}u} \right)} \right)\left( {\xi   {e^{h'Z}}} \right)}}{{\left| {h'} \right|{{\left| h \right|}^{\frac{{1 + \theta \alpha }}{p}}}}}} \right\|_{{L^p}\left( {{{\mathbb{H}}^n}} \right)}^p\nonumber \\
   \le&  c\left\| {\frac{{\left( {{{\left| {{\Delta _{Z,h}}u} \right|}^{\frac{{\alpha  - 1}}{p}}}\left( {{\Delta _{Z,h}}u} \right)} \right)\left( {\xi   {e^{h'Z}}} \right)}}{{{{\left| h \right|}^{\frac{{1 + \theta \alpha }}{p}}}}}} \right\|_{{L^p}\left( {{B_{\frac{{R + r}}{2} + {h_0}}}} \right)}^p  \nonumber \\
  \le& c\int_{{B_{\frac{{R + r}}{2} + 2{h_0}}}} {\frac{{{{\left| {{\Delta _{Z,h}}u} \right|}^{\alpha  - 1 + p}}}}{{{{\left| h \right|}^{1 + \theta \alpha }}}}d\xi }  \le c\int_{{B_R}} {\frac{{{{\left| {{\Delta _{Z,h}}u} \right|}^\alpha }}}{{{{\left| h \right|}^{1 + \theta \alpha }}}}d\xi }  \nonumber \\
   \le& c\left( {\int_{{B_R}} {{{\left( {\frac{{\left| {{\Delta _{Z,h}}u} \right|}}{{{{\left| h \right|}^{\frac{{1 + \theta \alpha }}{\alpha }}}}}} \right)}^{\frac{{\alpha q}}{{q - p + 2}}}}d\xi }  + 1} \right),
\end{align}
where $c=c(Q,h_0,p)>0$. Substituting \eqref{eq475} and \eqref{eq476} into \eqref{eq474} yields
\begin{align}\label{eq477}
   & \left\| {\frac{{{\Delta _{Z,h'}}{\Delta _{Z,h}}u}}{{{{\left| {h'} \right|}^{\frac{p}{{\alpha  - 1 + p}}}}{{\left| h \right|}^{\frac{{1 + \theta \alpha }}{{\alpha  - 1 + p}}}}}}} \right\|_{{L^{\alpha  - 1 + p}}\left( {{B_r}} \right)}^{\alpha  - 1 + p}\nonumber \\
  \le &  c\int_{{B_R}} {{{\left| {{\nabla _H}\left( {\frac{{{{\left| {{\Delta _{Z,h}}u} \right|}^{\frac{{\alpha  - 1}}{p}}}\left( {{\Delta _{Z,h}}u} \right)\varphi }}{{{{\left| h \right|}^{\frac{{1 + \theta \alpha }}{p}}}}}} \right)} \right|}^p}d\xi }  + c\left( {\int_{{B_R}} {{{\left( {\frac{{\left| {{\Delta _{Z,h}}u} \right|}}{{{{\left| h \right|}^{\frac{{1 + \theta \alpha }}{\alpha }}}}}} \right)}^{\frac{{\alpha q}}{{q - p + 2}}}}d\xi }  + 1} \right),
\end{align}
where $c=c(Q,h_0,p,\alpha)>0$. We choose $h'=h$ and take the supremum over $h$ for $0 < \left| h \right| <  {{h_0}} $, and then use \eqref{eq472} and \eqref{eq477} to get
\begin{align}\label{eq478}
   & \mathop {\sup }\limits_{0 < \left| h \right| < {h_0}} \int_{{B_r}} {{{\left| {\frac{{\Delta _{Z,h}^2u}}{{{{\left| h \right|}^{\frac{{1 + p + \theta \alpha }}{{\alpha  - 1 + p}}}}}}} \right|}^{\alpha  - 1 + p}}d\xi }  \nonumber \\
   \le&  c(1+\Lambda) \left( \mathop {\sup }\limits_{0 < \left| h \right| < {h_0}}{\int_{{B_R}} {{{\left( {\frac{{\left| {{\Delta _{Z,h}}u} \right|}}{{{{\left| h \right|}^{\frac{{1 + \theta \alpha }}{\alpha }}}}}} \right)}^{\frac{{\alpha q}}{{q - p + 2}}}}d\xi }  + \int_{{B_{R + {4h_0}}}} {{{\left| {{\nabla _H}u} \right|}^q}d\xi } + 1} \right),
\end{align}
where $c=c(Q,h_0,p,q,s,\alpha)>0$.

\textbf{Step 5: Conclusion.} We write
\[\alpha  = q - p + 2,\;\theta  = \frac{{q - p + 1}}{{q - p + 2}}.\]
Then
\[\alpha  - 1 + p = q + 1,\;\frac{{1 + p + \theta \alpha }}{{\alpha  - 1 + p}} = \frac{{q + 2}}{{q + 1}} = \frac{1}{{q + 1}} + 1,\;\frac{{\alpha q}}{{q - p + 2}} = q,\;\frac{{1 + \theta \alpha }}{\alpha } = 1.\]
Thus, \eqref{eq478} becomes
\begin{align}\label{eq479}
   & \mathop {\sup }\limits_{0 < \left| h \right| < {h_0}} \int_{{B_r}} {{{\left| {\frac{{\Delta _{Z,h}^2u}}{{{{\left| h \right|}^{\frac{1}{{q + 1}} + 1}}}}} \right|}^{q + 1}}d\xi }  \nonumber\\
  \le & c(1+\Lambda) \left( \mathop {\sup }\limits_{0 < \left| h \right| < {h_0}} \int_{{B_R}} {{\left( {\frac{{\left| {{\Delta _{Z,h}}u} \right|}}{{\left| h \right|}}} \right)}^q}d\xi   + {\int_{{B_{R + {4h_0}}}} {{{\left| {{\nabla _H}u} \right|}^q}d\xi } + 1} \right),
\end{align}
where $c=c(Q,h_0,p,q,s)>0$. Moreover, we use Lemma \ref{Le22}, $r = R - 4{h_0}$ and \eqref{eq479} to get
\begin{equation}\label{eq480}
\mathop {\sup }\limits_{0 < \left| h \right| < {h_0}} \int_{{B_{R - {4h_0}}}} {{{\left| {\frac{{\Delta _{Z,h}^2u}}{{{{\left| h \right|}^{\frac{1}{{q + 1}} + 1}}}}} \right|}^{q + 1}}d\xi }  \le c(1+\Lambda) \left(  {\int_{{B_{R + {4h_0}}}} {{{\left| {{\nabla _H}u} \right|}^q}d\xi } + 1} \right),
\end{equation}
where $c=c(Q,h_0,p,q,s)>0$.
\end{proof}

\section{Proofs of Theorem \ref{Th13} and Theorem \ref{Th14}}\label{Section 4}

In this section, we present the proofs of Theorem \ref{Th13} and Theorem \ref{Th14}. Prior to proving the theorems, we first establish the following proposition regarding the gradient integrability of weak solutions by using the Caccioppoli-type inequality.

\begin{proposition}\label{Pro42}
Let $2\le p <\infty,\;0<s<1$ and $0\le \Lambda \le 1$. If $u \in HW_{{\rm{loc}}}^{1,p}\left( {{B_2}\left( {{\xi _0}} \right) \cap L_{sp}^{p - 1}\left( {{\mathbb{H}^n}} \right)} \right)$ is a weak solution of \eqref{eq0} in ${{B_{2}}\left( {{\xi _0}} \right)}$ satisfying \eqref{eq41}, then
\begin{equation*}
\int_{{B_{\frac{7}{8}}}\left( {{\xi _0}} \right)} {{{\left| {{\nabla _H}u} \right|}^p}d\xi }  \le C\left( {Q,p,s} \right).
\end{equation*}
\end{proposition}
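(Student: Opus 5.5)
We apply the Caccioppoli-type inequality of Lemma \ref{Le211} twice, once to $u$ with $k=-1$ and once to $-u$, and use the normalization \eqref{eq41} to bound every term on the right-hand side by a constant. Assume $\xi_0=0$. Since $u$ is a weak solution, it is also a weak subsolution. Take $\omega=(u-k)_+$ with $k=-1$. The bound $\|u\|_{L^\infty(B_1)}\le 1$ then gives $\omega=u+1$ on $B_1$, so $0\le\omega\le 2$ there and $\nabla_H\omega=\nabla_H u$ a.e. in $B_1$. Symmetrically, $\omega=(u-1)_-=1-u$ handles the supersolution side, although one application already suffices, since $|\nabla_H\omega|=|\nabla_H u|$ on $B_1$. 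Choose a cut-off $\psi\in C_0^\infty(B_{15/16})$ with $0\le\psi\le1$, $\psi\equiv1$ on $B_{7/8}$, and $|\nabla_H\psi|\le c(Q)$. Apply \eqref{eq23} on the ball $B_r=B_1$ (note $B_1\subset B_2$). Dropping the nonnegative fractional term on the left, we get
\[
\int_{B_{7/8}}|\nabla_H u|^p\,d\xi\le \int_{B_1}\psi^p|\nabla_H\omega|^p\,d\xi\le c(p)\bigl(\mathrm{I}+\mathrm{II}+\mathrm{III}\bigr).
\]

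The three right-hand terms are estimated in turn.
\begin{itemize}
\item The first term is $\mathrm{I}=\int_{B_1}\omega^p|\nabla_H\psi|^p\,d\xi\le 2^p c(Q)|B_1|$.
\item The second term satisfies $\mathrm{II}\le 2^p\int_{B_1}\int_{B_1}|\psi(\xi)-\psi(\eta)|^p\|\eta^{-1}\circ\xi\|^{-Q-sp}\,d\xi\,d\eta$. Using the Lipschitz bound $|\psi(\xi)-\psi(\eta)|\le c\|\eta^{-1}\circ\xi\|_{\mathbb{H}^n}$ (horizontal gradient bound together with equivalence of the C-C and Korányi distances), this is at most $c\int_{B_1}\int_{B_2(\xi)}\|\eta^{-1}\circ\xi\|^{-Q+(1-s)p}\,d\eta\,d\xi\le c(Q,p,s)$, since $(1-s)p>0$.
\item The tail term is $\mathrm{III}=\sup_{\xi\in\operatorname{supp}\psi}\int_{\mathbb{H}^n\setminus B_1}\omega(\eta)^{p-1}\|\eta^{-1}\circ\xi\|^{-Q-sp}\,d\eta\cdot\int_{B_1}\omega\psi^p\,d\xi$. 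The second factor is at most $2|B_1|$. For the first factor, use $\omega^{p-1}\le 2^{p-2}(|u|^{p-1}+1)$ and then Lemma \ref{Le29} with $r=15/16$, $R=1$. This bounds it by $16^{Q+sp}2^{p-2}\bigl(\int_{\mathbb{H}^n\setminus B_1}|u|^{p-1}\|\eta\|^{-Q-sp}\,d\eta+\int_{\mathbb{H}^n\setminus B_1}\|\eta\|^{-Q-sp}\,d\eta\bigr)$, which is at most $c(Q,p,s)$ by the second condition in \eqref{eq41}.
\end{itemize}

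Combining the three bounds gives $\int_{B_{7/8}}|\nabla_H u|^p\le C(Q,p,s)$. A technical point is that Lemma \ref{Le211} is stated for subsolutions in $B_2$ and requires $\psi$ supported in $B_r$. Our choice $r=1$, $\operatorname{supp}\psi\subset B_{15/16}$ satisfies this, and the supremum over $\operatorname{supp}\psi$ keeps a fixed positive distance $1/16$ from $\partial B_1$, which is exactly what makes Lemma \ref{Le29} applicable. The only mildly delicate step is this tail term, where the truncation level must be handled outside $B_1$ (there $\omega$ may be large). The elementary bound $(a+1)_+^{p-1}\le 2^{p-2}(|a|^{p-1}+1)$ takes care of it. Everything else is routine.
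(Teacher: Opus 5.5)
Your proposal is correct and follows essentially the same route as the paper: you apply the Caccioppoli inequality of Lemma \ref{Le211} on $B_1$ with a cut-off equal to $1$ on $B_{7/8}$, then bound the local, fractional, and tail terms by constants using the normalization \eqref{eq41}. The only difference is cosmetic: you truncate at $k=-1$, so $\omega=u+1$ on $B_1$ and one application suffices, at the cost of the elementary bound $\omega^{p-1}\le 2^{p-2}(|u|^{p-1}+1)$ in the tail, whereas the paper takes $k=0$ and treats $\omega=u_+$ and $\omega=u_-$ separately.
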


\begin{proof}
Without loss of generality, we assume $\xi_0=0$. We only give the proof for $\omega  = {u_ + }$, the proof of $\omega  = {u_ - }$ is similar. By using Lemma \ref{Le211} with $r=1$, $\xi_0=0$ and $\psi  \in C_0^\infty \left( {{B_{\frac{8}{9}}}} \right)$ such that $\psi=1$ on ${{B_{\frac{7}{8}}}}$, $0 \le \psi \le 1$ and $\left| {{\nabla _H}\psi } \right| \le c\left( Q \right)$ for some $c\left( Q \right)>0$, we get
\begin{align*}
   \int_{{B_{\frac{7}{8}}}} {{{\left| {{\nabla _H}\omega } \right|}^p}d\xi }  \le& c\left( {Q,p} \right)\left( {\int_{{B_1}} {{\omega ^p}d\xi }  + \int_{{B_1}} {\int_{{B_1}} {\frac{{\left( {{{\left| {\omega \left( \xi  \right)} \right|}^p} + {{\left| {\omega \left( \eta  \right)} \right|}^p}} \right)}}{{\left\| {{\eta ^{ - 1}} \circ \xi } \right\|_{{{\mathbb{H}}^n}}^{Q + \left( {s - 1} \right)p}}}d\xi d\eta } } } \right) \\
   &  + c\left( {Q,p} \right){\int _{{{\mathbb{H}}^n}\backslash {B_1}}}\frac{{{{\left| {\omega \left( \eta  \right)} \right|}^{p - 1}}}}{{\left\| {{\eta ^{ - 1}} \circ \xi } \right\|_{{{\mathbb{H}}^n}}^{Q + sp}}}d\eta \cdot\int_{{B_1}} {\left| \omega  \right|d\xi } \\
    \le& c\left( {Q,p} \right)\left( {1 + c\left( {Q,p,s} \right) + c\left( {Q,p,s} \right)} \right)\\
     \le &c\left( {Q,p,s} \right).
\end{align*}
\end{proof}

\textbf{Proof of Theorem \ref{Th13}} By Lemma \ref{Le212}, we know $u \in L_{loc}^\infty \left( \Omega  \right)$. Without loss of generality, we assume $\xi_0=0$, and write
\[{M_R} = {\left\| u \right\|_{{L^\infty }\left( {{B_R}} \right)}} + {\rm{Tai}}{{\rm{l}}_{p - 1,sp,p}}\left( {u;0,R} \right) > 0\]
and
\begin{equation}\label{eq484}
 {u_R}\left( \xi  \right) = \frac{1}{{{M_R}}}u\left( {R\xi } \right)\;\;\rm{for}\;\xi \in B_2.
\end{equation}
By applying a scaling transformation, it suffices to prove
\[{\left\| u _R\right\|_{{C^\gamma }\left( {{B_{\frac{1}{2}}}} \right)}} \le C\left( {Q,p,s,\gamma } \right),\]
from which Theorem \ref{Th13} follows.

Note that $u_R$ is a local weak solution of $ - {\Delta _p}u + \Lambda {R^{p - sp}}{\left( { - {\Delta _p}} \right)^s}u = 0$ in $B_2$ and satisfies
\begin{equation}\label{eq485}
{\left\| {{u_R}} \right\|_{{L^\infty }\left( {{B_1}} \right)}} \le 1,\;{\int _{{\mathbb{H}^n}\backslash {B_1}}}\frac{{{{\left| {{u_R}\left( \eta  \right)} \right|}^{p - 1}}}}{{\left\| \eta  \right\|_{{\mathbb{H}^n}}^{Q + sp}}}d\eta  \le 1,\;\int_{{B_{\frac{7}{8}}}} {{{\left| {{\nabla _H}{u_R}} \right|}^p}d\xi }  \le C\left( {Q,p,s} \right).
\end{equation}

For simplicity, we denote $u_R$ by $u$. Fix $0<\gamma<1$ and choose $i_\infty \in \mathbb{N}\setminus {0}$ such that
\begin{equation}\label{eq492}
1 - \gamma  > \frac{Q}{{p + {i_\infty }}}.
\end{equation}
For $i = 0, \cdots ,{i_\infty }$, we define
\[{q_i} = p + i,\]
and
\[{h_0} = \frac{1}{{112{i_\infty }}},\;{R_i} = \frac{7}{8} - 4{h_0} - 14{h_0}i,\]
so
\[{R_0} + 4{h_0} = \frac{7}{8},\;{R_{{i_\infty }}} + 4{h_0} = \frac{3}{4},\;4{h_0} <R_i<1-5{h_0} .\]
By using Proposition \ref{Pro41} with
\[R = {R_i},q = {q_i},\;i = 0, \cdots ,{i_\infty },\]
and we have by \eqref{eq485}
\begin{equation}\label{eq486}
\mathop {\sup }\limits_{0 < \left| h \right| < {h_0}} \int_{{B_{R_0 - {4h_0}}}} {{{\left| {\frac{{\Delta _{Z,h}^2u}}{{{{\left| h \right|}^{\frac{1}{{{q_1}}} + 1}}}}} \right|}^{{q_1}}}d\xi }  \le c(1 + \Lambda )\left( {\int_{{B_{\frac{7}{8}}}} {{{\left| {{\nabla _H}u} \right|}^p}d\xi }  + 1} \right) \le C\left( {Q,p,s,\gamma} \right).
\end{equation}
Consider ${R_i} - 10{h_0} = {R_{i + 1}}   + 4{h_0}$ for $i = 0, \cdots ,{i_\infty }$, we use Lemma \ref{Le213} in \eqref{eq486} to get
\begin{equation}\label{eq487}
\int_{{B_{{R_1} + 4{h_0}}}} {{{\left| {{\nabla _H}{u}} \right|}^{{q_1}}}d\xi }  \le C\left( {Q,p,s,\gamma} \right).
\end{equation}
Applying Proposition \ref{Pro41} and \eqref{eq487} once again, we obtain
\begin{equation}\label{eq488}
\mathop {\sup }\limits_{0 < \left| h \right| < {h_0}} \int_{{B_{{R_1} - 4{h_0}}}} {{{\left| {\frac{{\Delta _{Z,h}^2u}}{{{{\left| h \right|}^{\frac{1}{{{q_2}}} + 1}}}}} \right|}^{{q_2}}}d\xi }  \le c(1 + \Lambda )\left( {\int_{{B_{{{{R_1} + 4{h_0}}}}}} {{{\left| {{\nabla _H}u} \right|}^{{q_1}}}d\xi }  + 1} \right) \le C\left( {Q,p,s,\gamma} \right).
\end{equation}
Moreover, by Lemma \ref{Le213} in \eqref{eq488}, we have
\begin{equation}\label{eq489}
\int_{{B_{{R_2} + 4{h_0}}}} {{{\left| {{\nabla _H}{u}} \right|}^{{q_2}}}d\xi }  \le C\left( {Q,p,s,\gamma} \right).
\end{equation}
Repeating this procedure, it yields
\begin{equation}\label{eq490}
  \int_{{B_{{R_{i+1}} + 4{h_0}}}} {{{\left| {{\nabla _H}{u}} \right|}^{{q_{i+1}}}}d\xi }  \le C\left( {Q,p,s,\gamma} \right)
\end{equation}
for all $i = 0, \cdots ,{i_\infty }-1.$ Taking $i={i_\infty }-1$ in \eqref{eq490} and using ${\left\| {{u}} \right\|_{{L^\infty }\left( {{B_1}} \right)}} \le 1$, we deduce
\begin{equation}\label{eq491}
{\left\| u \right\|_{H{W^{1,{q_{{i_\infty }}}}}\left( {{B_{{R_{{i_\infty }}} + 4{h_0}}}} \right)}} \le C\left( {Q,p,s,\gamma } \right).
\end{equation}
By means of \eqref{eq492}, we know ${q_{{i_\infty }}}>Q$, so we use ${R_{{i_\infty }}} + 4{h_0} = \frac{3}{4}$ and Lemma \ref{Le23} to get
$u \in C_{loc}^\gamma \left( {{B_{\frac{3}{4}}}} \right)$. In fact, we take a cut-off function $\psi$ between ${{B_{\frac{1}{2}}}}$ and ${{B_{\frac{3}{4}}}}$. Then
\begin{align}\label{eq493}
{\left\| u \right\|_{{C^\gamma }\left( {{B_{\frac{1}{2}}}} \right)}}& \le {\left\| {u\psi } \right\|_{{C^\gamma }\left( {{{\mathbb{H}}^n}} \right)}} \le c{\left\| {u\psi } \right\|_{H{W^{1,{q_{{i_\infty }}}}}\left( {{{\mathbb{H}}^n}} \right)}} = c{\left\| {u\psi } \right\|_{H{W^{1,{q_{{i_\infty }}}}}\left( {{B_{\frac{3}{4}}}} \right)}}\nonumber\\
& \le c{\left\| u \right\|_{H{W^{1,{q_{{i_\infty }}}}}\left( {{B_{\frac{3}{4}}}} \right)}} \le C\left( {Q,p,s,\gamma } \right).
\end{align}
Therefore, Theorem is proved.

\textbf{Proof of Theorem \ref{Th14}} After rescaling the variables as in the proof of Theorem \ref{Th13}, it suffices to show that
$${\left\| u_R \right\|_{{C^{1,\alpha }}\left( {{B_{\frac{1}{8}}}} \right)}} \le c\left( {Q,p,s,\gamma } \right),$$
where the function $u_R$ defined in \eqref{eq484} satisfies the following conditions:
\begin{equation}\label{eq52}
{\left\| {{u_R}} \right\|_{{L^\infty }\left( {{B_1}} \right)}} \le 1,\;{\int _{{\mathbb{H}^n}\backslash {B_1}}}\frac{{{{\left| {{u_R}\left( \eta  \right)} \right|}^{p - 1}}}}{{\left\| \eta  \right\|_{{\mathbb{H}^n}}^{Q + sp}}}d\eta  \le 1.
\end{equation}
By Theorem \ref{Th13}, there exists $\gamma>\frac{sp}{p-1}$ such that
\[\mathop {\sup }\limits_{\xi  \ne \eta  \in {B_{\frac{1}{2}}}\left( {{\xi _0}} \right)} \frac{{\left| {{u_R}\left( \xi  \right) - {u_R}\left( \eta  \right)} \right|}}{{{{\left\| {{\eta ^{ - 1}} \circ \xi } \right\|}_{{\mathbb{H}^n}}^\gamma }}} \le c\left( {Q,p,s,\gamma } \right).\]
Now take any $\xi_0 \in {B_{\frac{1}{4}}}$, then we deduce from the above formula and the choice of $\gamma$ that
\[\int_{{B_{\frac{1}{4}}}\left( {{\xi _0}} \right)} {\frac{{{{\left| {{u_R}\left( {{\xi _0}} \right) - {u_R}\left( \eta  \right)} \right|}^{p - 1}}}}{{{{\left\| {{\eta ^{ - 1}} \circ {\xi _0}} \right\|}_{{\mathbb{H}^n}}^{Q + sp}}}}d\eta }  \le c\left( {Q,p,s,\gamma } \right)\int_{{B_{\frac{1}{4}}}\left( {{\xi _0}} \right)} {\frac{1}{{{{\left\| {{\eta ^{ - 1}} \circ {\xi _0}} \right\|}_{{\mathbb{H}^n}}^{Q + sp - \gamma \left( {p - 1} \right)}}}}d\eta }  = c\left( {Q,p,s,\gamma } \right).\]
Moreover, we have by \eqref{eq52}
\[\int_{{\mathbb{H}^n}\backslash {B_{\frac{1}{4}}}\left( {{\xi _0}} \right)} {\frac{{{{\left| {{u_R}\left( {{\xi _0}} \right) - {u_R}\left( \eta  \right)} \right|}^{p - 1}}}}{{{{\left\| {{\eta ^{ - 1}} \circ {\xi _0}} \right\|}_{{\mathbb{H}^n}}^{Q + sp}}}}d\eta }  \le c\left( {Q,p,s} \right).\]
Hence
\[{\left\| {{{{{\left( { - \Delta _{{{\mathbb{H}},p}}} \right)}^s}}}{u_R}} \right\|_{{L^\infty }\left( {{B_{\frac{1}{4}}}} \right)}} \le c\left( {Q,p,s,\gamma } \right),\]
and also
\[{\left\| {{{{\left( { - \Delta _{{{\mathbb{H}},p}}} \right)}}}{u_R}} \right\|_{{L^\infty }\left( {{B_{\frac{1}{4}}}} \right)}} \le c\left( {Q,p,s,\gamma } \right),\]
which together with \eqref{eq52} and Theorem 1.2 in \cite{MZ21} imply
\[{\left\| {{u_R}} \right\|_{{C^{1,,\alpha }}\left( {{B_{\frac{1}{8}}}} \right)}} \le c\left( {Q,p,s,\gamma } \right).\]
%\section{H\"{o}lder Continuity}\label{Section 4}

%\section{Harnack Inequality}\label{Section 5}

%\section{Weak Harnack Inequality}\label{Section 6}

\section*{Acknowledgements}
The author would like to thank Pengcheng Niu for comments and suggestions. This work was supported by the National Natural Science Foundation of China (No. 12501269) and the Scientific Research Program Funded by Shaanxi Provincial Education Department (No. 25JK0360).

\section*{Declarations}
\subsection*{Conflict of interest} The authors declare that there is no conflict of interest. We also declare that this
manuscript has no associated data.

\subsection*{Data Availability} Data sharing is not applicable to this article as no datasets were generated or analysed during the current study.

%{\small

\end{document}